

\documentclass[reqno]{amsart}
\usepackage{url}
\usepackage{amssymb,amsmath}
\usepackage{graphicx}

\usepackage[a4paper, margin=2.5cm]{geometry}
\input xypic
\xyoption{all}

\usepackage{times}
\usepackage{amsmath, amsthm, amsfonts}

\usepackage[colorlinks,linkcolor=red,citecolor=green,anchorcolor=blue]{hyperref}
\usepackage{color}
\usepackage{tikz}
\usetikzlibrary{arrows} \usetikzlibrary{patterns}
\usepackage[all,cmtip]{xy}

\input diagxy


\newtheorem{theorem}{Theorem}[section]
\newtheorem{lem}[theorem]{Lemma}
\newtheorem{defn}[theorem]{Definition}
\newtheorem{cor}[theorem]{Corollary}
\newtheorem{prop}[theorem]{Proposition}
\newtheorem{construction}[theorem]{Construction}

\theoremstyle{definition}
\newtheorem{exa}[theorem]{Example}

\newtheorem{rem}[theorem]{Remark}


\numberwithin{equation}{section}

\def \msf{\mathsf}

\def \mc{\mathcal}

\def \inv{^{-1}}
\def \0{\infty}

\def \qq{\quad}
\def \rto{\rightarrow}
\def \Rto{\Rightarrow}
\def \hrto{\hookrightarrow}
\def \rrto{\rightrightarrows}
\def \Hom{\mathrm{Hom}}
\def \ctimes{\stackrel{\circ}{\times}}
\def \tcirc{\tilde{\circ}}

\def \rhpu{\rightharpoonup}

\def \A{{\sf A}}
\def \B{{\sf B}}

\def \G{{\sf G}}
\def \H{{\sf H}}
\def \I{{\sf I}}
\def \J{{\sf J}}
\def \K{{\sf K}}
\def \L{{\sf L}}
\def \M{{\sf M}}
\def \N{{\sf N}}

\def \Q{{\sf Q}}

\def \U{{\sf U}}

\def \W{{\sf W}}

\def \f{{\sf f}}
\def \g{{\sf g}}

\def \u{{\sf u}}
\def \v{{\sf v}}
\def \w{{\sf w}}
\def \i{{\sf i}}

\def \q{{\sf q}}

\allowdisplaybreaks

\begin{document}



\title{The groupoid structure of groupoid morphisms}


\author{Bohui Chen}
\address{Department of Mathematics and Yangtze Center of Mathematics, Sichuan University, 610065, Chengdu, China}
\email{bohui@cs.wisc.edu}
\author{Cheng-Yong Du}
\address{Department of Mathematics, Sichuan Normal University, 610068, Chengdu, China}
\email{cyd9966@hotmail.com}
\author{Rui Wang}
\address{Department of Mathematics, University of California, Irvine, CA 92697-3875, USA}
\address{Department of Mathematics, University of California, Berkeley, CA, 94720--3840, USA}
\email{ruiwang@berkeley.edu}

\keywords{Groupoid; morphism groupoid; automorphism groupoid}
\subjclass[2010]{Primary: 18B04; 20L05; Secondary: 18A30; 18D05}

\begin{abstract}
In this paper we construct two groupoids from
morphisms of groupoids, with one from a categorical viewpoint
and the other from a geometric viewpoint.
We show that for each pair of groupoids, the two kinds of
groupoids of morphisms are equivalent.
Then we study the automorphism groupoid of a groupoid.
\end{abstract}

\maketitle


\section{Introduction}

In this paper we study morphisms and automorphisms of groupoids. Our motivation comes from the study of maps between orbifolds and group actions on orbifolds. It is well-known that an orbifold can be considered as a Morita equivalence class of proper \'etale Lie groupoids (cf. \cite{Adem-Leida-Ruan2007}). Hence, among various definitions for morphisms between orbifold groupoids, it turns out that the orbifold homomorphism (cf. \cite{Adem-Leida-Ruan2007}) is most adapted to the purpose of understanding maps between orbifolds. The orbifold homomorphism is the motivation of the morphism we consider in this paper (cf. Definition \ref{D morphism}). Further more, motivated by the study of the space of the orbifold homomorphisms in orbifold Gromov--Witten theory \cite{Chen-Ruan2002,Chen-Ruan2004}, it is important to build up a groupoid structure for morphisms of groupoids. This is the main issue we deal with in this paper.

We now outline our approach.

\subsection{Morphism groupoids}
Given a pair of groupoids $\G$ and $\H$, by a morphism (cf. Definition \ref{D morphism}) we mean a pair of strict morphisms
\begin{align}\label{E generalized-morphisms-introduction}
\xymatrix{
\sf G  & \sf K \ar[l]_-\psi \ar[r]^-{\u} & \H}
\end{align}
with $\psi$ being an equivalence of groupoids. We denote the morphism by $\sf (\psi,K,u):G\rhpu H$.

An arrow between two such morphisms is captured by the following diagram, with $\alpha$ being a natural transformation for the diagram on the right:
\begin{align}\label{E arrow-introduction}
\begin{split}
\begin{tikzpicture}
\def \x{3}
\def \y{1}
\node (A00) at (0,0)       {$\sf G$};
\node (A10) at (\x,0)      {$\K_1\times_\G\K_2$};
\node (A11) at (\x,\y)     {$\K_1$};
\node (A1-1) at (\x,-1*\y) {$\K_2$};
\node (A30) at (3*\x,0)    {$\sf H$.};
\node at (1.8*\x,0) {$\Downarrow \alpha$};
\path (A00) edge [<-] node [auto] {$\scriptstyle{\psi_1}$} (A11);
\path (A00) edge [<-] node [auto,swap] {$\scriptstyle{\psi_2}$} (A1-1);
\path (A11) edge [->] node [auto] {$\scriptstyle{\u_1}$} (A30);
\path (A1-1) edge [->] node [auto,swap] {$\scriptstyle{\u_2}$} (A30);
\path (A10) edge [->] node [auto] {$\scriptstyle{\pi_1}$} (A11);
\path (A10) edge [->] node [auto,swap] {$\scriptstyle{\pi_2}$} (A1-1);
\end{tikzpicture}
\end{split}
\end{align}
We denote the arrow by $(\psi_1,\K_1,\u_1)\xrightarrow{\alpha}(\psi_1,\K_2,\u_2)$.

We define the (vertical) composition of arrows in \S\ref{Subs MgVfp} (cf. Construction \ref{C vert-compose}). The main ingredient in the construction is the fiber product of groupoids. Then we get a groupoid of morphisms (cf. Theorem \ref{T Mor-groupoid})
\[
{\sf Mor(G,H)}=(\mathrm{Mor}^1(\G,\H)\rrto \mathrm{Mor}^0(\G,\H)).
\]
In \S \ref{Subs MgVsfp}, we replace the fiber product by strict fiber product (cf. Definition \ref{D Geo-fib}) to simplify the constructions. For this purpose, we focus on full-morphisms (a morphism $(\psi,\sf K, u)$ is called a full-morphism if $\psi^0$ is surjective). With the same procedure as in \S\ref{Subs MgVfp}, we get a groupoid of full-morphisms (cf. Theorem \ref{T FMor-groupoid})
\[
{\sf FMor(G,H)}=(\mathrm{FMor}^1(\G,\H)\rrto \mathrm{FMor}^0(\sf G,H)).
\]
We show in Theorem \ref{T equi-i} that there is a natural equivalence of groupoids $\sf i:FMor(G,H)\rto Mor(G,H)$.

In \S \ref{S Composition-Mor-FMor}, we consider the composition functors:
$$
\circ: \sf{Mor}(G,H)\times \sf{Mor}(H,N)\rto \sf{Mor}(G,N),
\qq
\text{and}
\qq \tilde\circ: \sf{FMor}(G,H)\times \sf{FMor}(H,N)\rto \sf{FMor}(G,N).
$$

For instance, the composition of $\sf(\psi,K,u):\sf G\rhpu H$ and $\sf(\phi,L,v): H\rhpu N$, denoted by $\sf (\phi, L,v)\circ(\psi,K,u)$, is given by
\begin{align*}
\xymatrix{
\G &\K\times_\H\L \ar[l]_-{\psi\circ\pi_1} \ar[r]^-{\v\circ\pi_1} &\N.}
\end{align*}
Furthermore, we study the horizontal composition of arrows in \eqref{E arrow-introduction} to get the functor $\circ$. We show that these compositions are associative (modulo certain canonical isomorphisms).

As an application, we study the automorphisms of $\sf G$ in \S\ref{S gpaction}. An automorphism of $\sf G$ is defined as a morphism $\sf (\psi,K,u):G\rhpu G$, such that after composing with another morphism $\sf (\phi,L,v):G\rhpu G$ there are arrows
\[
\sf(\psi,K,u)\circ(\phi,L,v)\xrightarrow{\alpha} 1_G,\qq\mbox{and}\qq
\sf(\phi,L,v)\circ (\psi,K,u)\xrightarrow{\beta} 1_G,
\]
where
\[
\sf 1_G=(id_G,G,id_G):
\xymatrix{\G &\G\ar[l]_{\sf id_G} \ar[r]^{\sf id_G}& \G}.
\]
Denote by $\mathrm{Aut}^0(\sf G)$ the set of all automorphisms of $\sf G$. We restrict $\sf Mor(G,G)$ to $\mathrm{Aut}^0(\sf G)$ to get a groupoid of automorphisms $\sf Aut(G)$ of $\G$. We show that the coarse space $|\sf Aut(G)|$ is a group, and the automorphism groupoid $\sf Aut(G)$ is a $\mc K(\sf G)$-gerbe over $|\sf Aut(G)|$ (cf. Theorem \ref{T |aut|-is-group}). As an application we introduce a definition of group action on a groupoid $\sf G$ with trivial $\mc K(\G)$ (see Definition \ref{D group-action}).

\subsection{Relation with other literatures}
Moderijk--Pronk (\cite{Moerdijk-Pronk1997}) found the relation between effective (or reduced) orbifolds and effective orbifold groupoids, that is an orbifold corresponds to a Morita equivalence class of effective orbifold groupoids. So there are two way to study effective orbifolds, one by effective orbifold groupoids, the other one by orbifold charts/altases. See also for example \cite{Adem-Leida-Ruan2007,Lupercio-Uribe2004} the relation between orbifold atlases and orbifold groupoids. As the category of manifolds people also want to get a category of orbifolds. However, it turns out that 2-category and bicategory appear naturally.

On the groupoid side, there is a 2-category {\bf 2Gpd} of groupoids (or topological groupoids, or Lie groupoids or etc.), with morphism categories consisting of strict morphisms and natural transformations. Pronk (\cite{Pronk1996}) proposed a method, called right bicalculus of fractions, to construct a new bicategory $\mc A[W\inv]$ out of a bicategory $\mc A$ from a collection $W$ of morphisms in $\mc A$ that satisfies various axioms. By applying right bicalculus of fractions to {\bf 2Gpd} with $W$ being the collection of all equivalences of groupoids, one could get a bicategory ${\bf 2Gpd}[W\inv]$ of groupoids. Tommasini revisited this construction in \cite{Tommasini2016a} and constructed a bicategory of effective orbifold groupoids with $W$ consisting of equivalence of effective orbifold groupoids in \cite{Tommasini2016b}. The 1-morphisms in the resulting bicategory corresponds to our morphisms \eqref{E generalized-morphisms-introduction}. In a bicategory obtained via right bicalculus of fractions, the compositions of 1-morphisms and 2-morphisms depend on certain choices. In particular, although different choices for the compositions of 2-morphisms will lead to the same bicategory when the choices for the compositions of 1-morphism are fixed, different choices for the compositions of 1-morphisms would lead to different resulting bicategories, even if they are equivalent. In this paper, we construct the morphism groupoids and write down the composition functor explicitly. Hence we have explicit formulae for compositions of 1-morphisms and 2-morphisms. On the other hand, the strict fiber product gives a more geometric way to compose full-morphisms. This would be useful for us to assign smooth structure over the morphism groupoids when we deal with Lie groupoids. Moreover, we also show that the (horizontal) composition functors are associative under canonical isomorphisms between fiber products. This implies that what we get are two bicategories with morphism groupoids being $\sf Mor(G,H)$ and $\sf FMor(G,H)$ respectively.

On the orbifold charts/atlases side, Pohl (\cite{Pohl2017}) studied the category of effective (or reduced) orbifolds by using local charts/orbifold atlas, Borzellino--Brunsden (\cite{Borzellino-Brunsden2008}) and Schmeding (\cite{Schmeding2015}) studied the group of orbifold diffeomorphisms of an orbifold, where they viewed orbifold morphisms as equivalence classes of collections of local liftings that satisfy some certain compatible conditions, which in fact corresponds to the coarse space $|\sf Aut(G)|$ of our construction. There is also a 2-category of effective orbifold atlases (cf. \cite{Tommasini2012,Du-Shen-Zhao2018}), which in fact corresponds to the 2-category of effective orbifold groupoids, a sub-category of {\bf 2Gpd}. Tommasini (\cite{Tommasini2016b}) also constructed a bicategory of effective orbifold atlases by modifying the construction of the 2-category of effective orbifold atlases, followed his construction of bicategory of effective orbifold groupoids.

The groupoid structure over $|\sf Mor(G,H)|$ constructed here gives a more concrete description of orbifold morphisms and would be useful in the study of moduli spaces of pseudo-holomorphic curves in orbifold  Gromov--Witten theory. On the other hand, non-effective orbifolds appear naturally in orbifold Gromov--Witten theory. However, there is no easy way to describe non-effective orbifolds. It is better to study non-effective orbifolds via orbifold groupoids. Therefore we study the morphisms between orbifolds via groupoids in this paper.

\subsection{Topological and Lie groupoids}\label{Subs discussion-on-top-lie-groupoids-1.3}
All discussions can be easily generalized to the cases of topological groupoids and Lie groupoids. For topological groupoids, we only need to add the continuous conditions on various maps involved. For Lie groupoids we need first add smooth conditions on various maps involved, and then replace surjective maps by surjective submersions.

When we construct $\sf FMor(G,H)$ we could also consider a more restrictive full-morphisms. For example when we consider topological/Lie groupoids, we could require that in every full-morphism
$\sf (\psi,K,u):G\rhpu H$
the $\psi^0: K^0\rto G^0$ is an open covering of $ G^0$, and $\sf K$ is the pull-back groupoid over $ K^0$ via $\psi^0$. Under this constraint we could make $\sf FMor(G,H)$ into a topological groupoid. We discuss this issue in the appendix.

\subsection*{Acknowledgements}
The authors thank Lili Shen and Xiang Tang for helpful discussions. This work was supported by the National Natural Science Foundation of China (No. 11431001 $\&$ No. 11726607 $\&$  No. 11890663 $\&$ No. 11501393), by the IBS project ($\#$IBS-R003-D1), by Sichuan Science and Technology Program (No. 2019YJ0509) and by Sichuan University (No. 1082204112190).

\section{Basic concepts of groupoids}

For basic concepts about groupoids we refer readers to
\cite{Adem-Leida-Ruan2007,Mackenzie1987,Moerdijk-Mrcun2003}.

\subsection{Groupoids}
\label{Subs Top-Groupoid}

Let $\sf G$ be a small category with the set of objects denoted by $G^0$ and the set of morphisms denoted by $G^1$. Here $G^1$ is the collection of morphisms
\[
G^1=\bigsqcup_{(a,b)\in G^0\times G^0} G^1(a,b),
\]
where $G^1(a,b)$\footnote{In literatures on category, this is denoted by $\Hom(a,b)$. In this paper, we use this notation to emphasis the groupoid structure.} is the set of morphisms from $a$ to $b$. We call a morphism $x\in G^1(a,b)$ an {\em arrow} from $a$ to $b$, and call $a$ and $b$ to be the {\em source} and the {\em target} of $\alpha$ respectively. We write
\[
a=s(x),\qq b=t(x),
\qq
\mbox{and}
\qq
x:a\rto b\qq (\mbox{or}\;\; a \xrightarrow{x} b),
\]
where $s,t: G^1\rto G^0$ are called the {\em source map} and the {\em target map} of the category $\G$ respectively. Denote the composition of arrows\footnote{In this paper we use the convention that the composition of arrows of a groupoid is going from left to right, not the usual notation of composition of maps.} by
\[
\cdot:  G^1(a,b)\times  G^1(b,c)\rto  G^1(a,c), \qq
(x,y)\mapsto x\cdot y.
\]

\begin{defn}\label{D Groupoid}
We say that $\sf G$ is a {\em groupoid} if
\begin{enumerate}
\item[(1)] for any $a\in  G^0$, there exists a {\em unit} $1_a\in G^1(a,a)$ with respect to the composition ``$\cdot$'', i.e, $1_a \cdot x=x$ and $x\cdot 1_a=x$;

\item[(2)] for any $x\in  G^1(a,b)$, there exists a unique {\em inverse} $y\in  G^1(b,a)$ such that $x\cdot y=1_a$ and $y\cdot x=1_b$. We denote $y$ by $x^{-1}$.
\end{enumerate}
Define two  maps:
\begin{itemize}
\item the unit map $u:G^0\rto  G^1,\; a\mapsto 1_a$;

\item the inverse map $i:  G^1\rto  G^1,\; x\mapsto  x^{-1}$.
\end{itemize}
Therefore a groupoid $\sf G$ is a pair of sets $( G^0,G^1)$ with structure maps $(\cdot, s,t, u,i)$. We may denote $\sf G$ by $( G^1\rrto G^0)$, where the double arrows denote the source and target maps $s$ and $t$.

If we assume that $G^0$ and $G^1$ are topological space and the structure maps are continuous, we call $\sf G$ a {\em topological groupoid}.
\end{defn}

$ G^1$ defines an equivalence relation on $ G^0$: we say that
\[
a\sim b \iff G^1(a,b)\not=\varnothing.
\]
We call the quotient space $ G^0/G^1$ to be the {\em coarse space} of $\sf G$ and  denote it by $|\sf G|$. The projection map from $ G^0$ to $|\sf G|$ is denoted by $|\cdot|: G^0\rto \sf|G|$. When $\sf G$ is a topological groupoid, $|\sf G|$ is equipped with the quotient topology.

\begin{defn}\label{D strict-morphism}
By a {\em strict morphism} from a groupoid $\G=(G^1\rrto G^0)$
to a groupoid $\H=(H^1\rrto H^0)$, we mean a functor from the category $\sf G$ to $\sf H$. We denote a strict morphism by $\f=(f^0,f^1)$ with
\[
f^0: G^0\rto H^0,\qq f^1: G^1\rto H^1.
\]
A strict morphism $\sf f: G\rto H$ is an {\em isomorphism} if it has an inverse strict morphism.
\end{defn}

For a groupoid $\G$ we denote by ${\sf id_G}=(id_{G^0},id_{G^1}):\G\rto\G$ the identity strict morphism.

\begin{defn}\label{D natural-trans}
Let $\sf f,g:G\rto H$ be two strict morphisms. A {\em natural transformation} from $\sf f$ to $\sf g$, denoted by $\sf f \stackrel{\alpha}{\Rto}g:G\rto H$ or simply by $\sf f \stackrel{\alpha}{\Rto}g$, is a natural transformation between the two functors.
\end{defn}

A strict morphism from $\sf f: G\rto  H$ induces a map  $|\sf f|: |G|\rto | H|$ on coarse spaces. If there is an $\sf f \stackrel{\alpha}{\Rto}g:G\rto H$, then $\sf |f|=|g|$.

\subsection{Equivalence of groupoids}

\begin{defn} \label{D equi&Moritaequi}
Let $\sf G$ and $\sf H$ be two groupoids. A strict morphism $\sf f: G\rto H$ is called an {\em equivalence} if
\begin{enumerate}
\item[(1)]  the map $t\circ \mathrm{proj}_2: G^0 \times_{f^0,H^0,s} H^1 \xrightarrow{\mathrm{proj}_2} H^1\xrightarrow{t} H^0$ is surjective;

\item[(2)]  the square
\[
\xymatrix{
 G^1\ar[rr]^{ f^1} \ar[d]_{s\times t} &&
 H^1\ar[d]^{s\times t}\\
 G^0\times G^0\ar[rr]^{ f^0\times f^0} &&
 H^0\times H^0.
}\]
is a fiber product.
\end{enumerate}

Two groupoids $\sf G$ and $\sf H$ are {\em Morita equivalent} if there is a third groupoid $\sf K$ and two equivalences
\[
\xymatrix{\sf G &\sf K\ar[r]^-\psi\ar[l]_-{\phi} &\H.}
\]
\end{defn}

\begin{rem}\label{R equiv-bij-arrows}
We have the following two simple facts about the definition of equivalence.

(1) The first condition means that $\sf f$ is {\em essentially surjective}.

(2) The second condition means that $\sf f$ is {\em full and faithful}, that is for any $a,b\in G^0$, $ f^1$ induces a bijection
\begin{align}\label{E equiv-bij-arrows}
f^1: G^1(a,b)\rto H^1( f^0(a), f^0(b)).
\end{align}
Consequently, consider three arrows $x,y,z\in  H^1$ with $z=x\cdot y$, i.e. they fit into a commutative diagram
\[
\xymatrix{a\ar[r]^x \ar[dr]_z & b\ar[d]^y \\
&c,}
\]
and $a',b',c'\in  G^0$ such that $f^0(a')=a$, $f^0(b')=b$, $f^0(c')=c$. Then from \eqref{E equiv-bij-arrows} we get a commutative diagram in $G^1$
\begin{align}\label{E f-inv-is-homo}
\xymatrix{a'\ar[rr]^{( f^1)\inv(x)} \ar[drr]_{( f^1)\inv(z)} && b' \ar[d]^{( f^1)\inv(y)} \\ &&c'. }
\end{align}
\end{rem}

We state two simple facts without proofs.

\begin{lem}\label{L f-equiv=g-equiv}
Given a natural transformation $\sf f\stackrel{\alpha}{\Rto} g:G\rto H$, if one of $\sf f$ and $\sf g$ is an equivalence then so is the other one.
\end{lem}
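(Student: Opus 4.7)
The plan is to use the natural transformation $\alpha$ to transport the equivalence properties from $\sf f$ to $\sf g$, and conversely. First I would note that natural transformations of groupoids are invertible: the formula $\alpha^{-1}(a):=\alpha(a)^{-1}\in H^1(g^0(a),f^0(a))$ gives a natural transformation $\sf g\stackrel{\alpha^{-1}}{\Rto} f$. Hence the two directions are symmetric, and it suffices to assume that $\sf f$ is an equivalence and show $\sf g$ is.

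For essential surjectivity of $\sf g$, take any $h\in H^0$. Since $\sf f$ is essentially surjective, there exist $a\in G^0$ and $x\in H^1$ with $s(x)=f^0(a)$ and $t(x)=h$. The composite $y:=\alpha(a)^{-1}\cdot x$ is then an arrow in $H^1$ from $g^0(a)$ to $h$, witnessing essential surjectivity of $\sf g$.

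For fully faithfulness of $\sf g$, fix $a,b\in G^0$. The naturality axiom for $\alpha$ applied to an arrow $x\in G^1(a,b)$ reads
\[
f^1(x)\cdot\alpha(b)=\alpha(a)\cdot g^1(x),
\]
so that
\[
g^1(x)=\alpha(a)^{-1}\cdot f^1(x)\cdot\alpha(b).
\]
Conjugation by $\alpha$ defines a map
\[
\Phi:H^1(f^0(a),f^0(b))\rto H^1(g^0(a),g^0(b)),\qq y\mapsto \alpha(a)^{-1}\cdot y\cdot \alpha(b),
\]
whose obvious inverse $z\mapsto \alpha(a)\cdot z\cdot \alpha(b)^{-1}$ makes $\Phi$ a bijection. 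The identity above then reads $g^1|_{G^1(a,b)}=\Phi\circ f^1|_{G^1(a,b)}$. Since $\sf f$ is full and faithful, $f^1:G^1(a,b)\to H^1(f^0(a),f^0(b))$ is a bijection; composing with $\Phi$ shows that $g^1:G^1(a,b)\to H^1(g^0(a),g^0(b))$ is also a bijection. Hence $\sf g$ is full and faithful, and therefore an equivalence.

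There is no real obstacle here; the only point requiring care is to keep straight the left-to-right composition convention fixed in Section~2.1 so that the naturality identity is solved for $g^1(x)$ with the factors of $\alpha$ and $\alpha^{-1}$ placed on the correct sides.
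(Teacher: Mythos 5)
Your proof is correct. The paper actually states this lemma with no proof at all (``We state two simple facts without proofs''), so there is nothing to compare against; your argument --- invert $\alpha$ to reduce to one direction, compose with $\alpha(a)^{-1}$ for essential surjectivity, and factor $g^1|_{G^1(a,b)}$ as the conjugation bijection $\Phi$ composed with $f^1|_{G^1(a,b)}$ --- is the standard one the authors evidently had in mind, and you have handled the left-to-right composition convention consistently with Definition~\ref{D Groupoid} and the hom-set reformulation of condition (2) given in Remark~\ref{R equiv-bij-arrows}.
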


\begin{lem}\label{L eqs-compose-eq}
Given a pair of equivalences $\sf G\xrightarrow{\psi} H\xrightarrow{\phi} N$, the composition $\phi\circ\psi:\sf G\rto N$ is also an equivalence.
\end{lem}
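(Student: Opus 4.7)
The plan is to verify, in order, the two defining conditions of Definition \ref{D equi&Moritaequi} for the composite $\phi\circ\psi$, relying on the corresponding conditions for $\psi$ and $\phi$ separately.

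For essential surjectivity (condition (1)), I would start with an arbitrary $c\in N^0$ and chase arrows backward through $\phi$ and then $\psi$. Applying essential surjectivity of $\phi$ produces some $b\in H^0$ together with $y\in N^1$ satisfying $s(y)=\phi^0(b)$ and $t(y)=c$. Applying essential surjectivity of $\psi$ to $b$ then produces $a\in G^0$ together with $x\in H^1$ satisfying $s(x)=\psi^0(a)$ and $t(x)=b$. Since $\phi$ is a functor, $\phi^1(x)\in N^1$ goes from $\phi^0(\psi^0(a))=(\phi\circ\psi)^0(a)$ to $\phi^0(b)=s(y)$, so the composite arrow $\phi^1(x)\cdot y \in N^1$ witnesses essential surjectivity of $\phi\circ\psi$ at $c$.

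For the fully faithful condition (2), the cleanest approach is via the equivalent hom-set reformulation spelled out in Remark \ref{R equiv-bij-arrows}(2): it suffices to show that for every $a,b\in G^0$, the map $(\phi\circ\psi)^1\colon G^1(a,b)\to N^1((\phi\circ\psi)^0(a),(\phi\circ\psi)^0(b))$ is a bijection. By functoriality this map factors as
\[
G^1(a,b)\xrightarrow{\psi^1} H^1(\psi^0(a),\psi^0(b))\xrightarrow{\phi^1} N^1(\phi^0(\psi^0(a)),\phi^0(\psi^0(b))),
\]
and each factor is a bijection by the fully faithful hypothesis on $\psi$ and $\phi$ respectively; the composite is therefore a bijection.

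No step here poses a real obstacle: the argument is a direct unwinding of definitions, with the only mild care being the arrow composition in condition (1) and the use of the hom-set characterization of fully faithfulness rather than the fiber-product form in condition (2).
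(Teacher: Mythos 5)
Your proof is correct: the paper states this lemma without proof (``We state two simple facts without proofs''), and your argument is exactly the standard verification one would expect -- chaining the two essential-surjectivity witnesses via $\phi^1(x)\cdot y$ (consistent with the paper's left-to-right composition convention) and composing the two hom-set bijections using the reformulation of condition (2) given in Remark \ref{R equiv-bij-arrows}. Nothing is missing.
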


In the following we use $\psi,\phi,\varphi,\ldots$ to denote equivalences, and use $\sf f,g,h,u,v,w$, $\ldots$ to denote general strict morphisms.

On the other hand we have the following obvious criterion for the equivalence between a special kind of subgroupoid\footnote{Here we do not need the precise definition of sub-groupoid. One can think it as a subcategory. For explicit definition of subgroupoid see \cite[Definition 2.4 in Chapter 1]{Mackenzie1987}.} and the ambient one.

\begin{lem}\label{L inclusion-equiv}
Suppose $ G^0\subseteq H^0$ and $\G:=\H|_{G^0}$ is the restriction\footnote{This restriction groupoid $\G:=\H|_{G^0}$ has object space $G^0$ and morphism space $\bigsqcup_{a,b\in G^0}H^1(a,b)\subseteq H^1$. The structure maps are inherited from $\sf H$ naturally.} of $\sf H$ on $G^0$. If under the map $|\cdot|: H^0\rto|\H|$ we have $|G^0|=|H^0|=|\sf H|$, then the natural inclusion $\sf i: G\hrto H$ is an equivalence.
\end{lem}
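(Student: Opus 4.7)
The plan is to verify the two defining conditions of an equivalence from Definition \ref{D equi&Moritaequi} directly, exploiting the fact that the restriction groupoid is defined to inherit its arrows from $\sf H$.

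First I would unpack condition (2), full and faithfulness. By the definition of the restriction $\G=\H|_{G^0}$ given in the footnote, we have $G^1(a,b)=H^1(a,b)$ for every pair $a,b\in G^0$, and $\i^1$ is literally this identification (composed with the inclusion into $H^1$). Hence the square
\[
\xymatrix{
G^1\ar[rr]^{\i^1}\ar[d]_{s\times t} && H^1\ar[d]^{s\times t}\\
G^0\times G^0\ar[rr]^{\i^0\times \i^0} && H^0\times H^0
}
\]
is a fiber product essentially by construction, because $G^1$ is precisely the disjoint union $\bigsqcup_{a,b\in G^0} H^1(a,b)$ and $\i^0$ is the subset inclusion.

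Next I would verify condition (1), essential surjectivity. The hypothesis $|G^0|=|H^0|$ inside $|\H|$ says that every $\H$-equivalence class meets $G^0$: for each $b\in H^0$ there is some $a\in G^0$ with $|a|=|b|$, i.e.\ an arrow $h\in H^1(a,b)$. This exhibits the pair $(a,h)\in G^0\times_{\i^0,H^0,s}H^1$ whose image under $t\circ\mathrm{proj}_2$ is $b$, showing that this map is surjective.

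There is no real obstacle here; the proof is essentially bookkeeping, and the content of the lemma is that the restriction construction $\H|_{G^0}$ is tailored precisely so that the inclusion is automatically full and faithful, reducing the equivalence property to the single surjectivity-on-coarse-spaces hypothesis $|G^0|=|\H|$.
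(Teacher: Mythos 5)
Your proof is correct. The paper offers no proof of this lemma (it is stated as an ``obvious criterion'' immediately after the remark that two simple facts are stated without proof), and your direct verification of the two conditions of Definition \ref{D equi&Moritaequi} --- fullness/faithfulness holding by construction of the restriction groupoid, and essential surjectivity following from $|G^0|=|\H|$ --- is exactly the routine check the authors are implicitly relying on.
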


\subsection{Fiber product}\label{Subs fib-prdct}

Let $\sf f: F\rto H$ and $\sf g: G\rto H$ be two strict morphisms. The {\em fiber product} $\sf F\times_{f,H,g} G$ (or simply $\sf F\times_H G$) is defined to be a groupoid as the following (cf. \cite{Adem-Leida-Ruan2007}):
\begin{enumerate}
\item The object space is
\[
({\sf F\times_H G})^0=F^0\times_{ f^0, H^0,s}  H^1
\times_{t, H^0, g^0}  G^0=
 F^0\times_{H^0} H^1   \times_{H^0} G^0.
\]
An object is a triple $(a,x, b)$, with $a\in  F^0, b\in  G^0$ and $x\in{ H^1(f}^0(a), g^0(b))$. We draw it as
\begin{align}\label{E obj-fib-prdct-1}
\xymatrix{a \ar@{.>}[r]^x & b}
\qq \mbox{or}\qq
\xymatrix{a \ar@{.>}[r]^x_{\sf H} & b.}
\end{align}

\item Given two objects $(a,x,b)$ and $(a',x',b')$, an arrow from $(a,x,b)$ to $(a',x',b')$ consists of a pair of arrows $(y,z)$ with $y\in F^1(a,a'), z\in G^1(b,b')$, such that $ x\cdot g^1(z)= f^1(y)\cdot x'$, i.e. we have the following commutative diagrams
\begin{align}\label{E arrow-in-F*/H*G}
\begin{split}
\xymatrix{
a\ar@{.>}[r]^x\ar[d]_y  &
b\ar[d]^z\\
a'\ar@{.>}[r]^{x'} & b'
}
\end{split}\end{align}
after all arrows are transferred into $H^1$. Hence the arrow space is
\[
({\sf F\times_H G})^1= F^1\times_{s,F^0,\text{proj}_1}
\left(F^0\times_{H^0}\times H^1\times_{H^0} G^0\right)
\times_{\text{proj}_3,G^0,s}  G^1.
\]
Denote an arrow by $(y,(a,x,b),z)$. The source and target maps are obvious from the diagram \eqref{E arrow-in-F*/H*G}.

\item All other structure maps are obvious.
\end{enumerate}
There are two natural strict morphisms, called {\em projections}:
\begin{align*}
&\pi_1: {\sf F\times_H G\rto F},\qq (a,x,b)\mapsto a,\qq (y,(a,x,b),z)\mapsto y,
\\
&\pi_2: {\sf F\times_H G\rto G},\qq (a,x,b)\mapsto b,\qq (y,(a,x,b),z)\mapsto z.
\end{align*}
It is known that $\f\circ \pi_1$ and $\g\circ\pi_2$ are different up to a natural transformation.

We have the following useful result, which can be verified straightforwardly.

\begin{lem} \label{L G*/HK-equiv G}
When $\sf g:G\rto H$ is an equivalence, $\pi_1: \sf F\times_H G\rto F$ is an equivalence. Similarly, when $\sf f:F\rto H$ is an equivalence, $\pi_2: \sf F\times_H G\rto G$ is an equivalence.
\end{lem}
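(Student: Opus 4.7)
The plan is to verify the two conditions of Definition \ref{D equi&Moritaequi} for $\pi_1$, exploiting the corresponding properties of $\g$. By symmetry the statement for $\pi_2$ will then follow verbatim.

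First I would check essential surjectivity of $\pi_1$. Given any $a \in F^0$, I want to produce an object $(a',x,b) \in ({\sf F\times_H G})^0$ together with an arrow $y \in F^1$ from $a'$ to $a$. Simply take $a' = a$ and $y = 1_a$; it then remains to produce $b \in G^0$ and $x \in H^1(f^0(a), g^0(b))$. This is exactly the existence claim coming from essential surjectivity of $\g$: by Remark \ref{R equiv-bij-arrows}(1) applied to $\f^0(a) \in H^0$, there exist $b \in G^0$ and $x \in H^1$ with $s(x) = f^0(a)$ and $t(x) = g^0(b)$. Hence $(a, x, b)$ lies in $({\sf F\times_H G})^0$, and $\pi_1(a,x,b) = a$.

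Next I would show that $\pi_1$ is full and faithful, i.e.\ that for any two objects $(a, x, b), (a', x', b') \in ({\sf F\times_H G})^0$ the map
\[
\pi_1^1 : ({\sf F\times_H G})^1\bigl((a,x,b),(a',x',b')\bigr) \;\longrightarrow\; F^1(a,a'), \qquad (y,z)\mapsto y,
\]
is a bijection. Given $y \in F^1(a,a')$, the defining commutativity \eqref{E arrow-in-F*/H*G} forces $g^1(z) = x^{-1} \cdot f^1(y) \cdot x'$, which is a prescribed arrow in $H^1(g^0(b), g^0(b'))$. Since $\g$ is full and faithful (Remark \ref{R equiv-bij-arrows}(2)), there is a unique $z \in G^1(b,b')$ with this image, and by construction $(y,z)$ is a bona fide arrow in $({\sf F\times_H G})^1$. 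This simultaneously proves surjectivity (existence of $z$) and injectivity (uniqueness of $z$) of $\pi_1^1$ on each Hom-set, which is precisely the fiber-product/full-and-faithful condition (2) of Definition \ref{D equi&Moritaequi}.

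The hardest step is really bookkeeping rather than substance: one needs to keep the three groupoids $\sf F, G, H$ and the many structure maps straight so that the equation $x \cdot g^1(z) = f^1(y) \cdot x'$ is applied at the correct fiber above $H^0 \times H^0$. Once this is in place, both conditions for $\pi_1$ drop out of the corresponding conditions on $\g$, and swapping the roles of $\sf F$ and $\sf G$ (and of $\f$ and $\g$) gives the statement for $\pi_2$.
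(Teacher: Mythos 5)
Your proof is correct, and it is exactly the routine verification the paper alludes to when it says the lemma ``can be verified straightforwardly'' --- the paper itself supplies no proof. Both conditions of Definition \ref{D equi&Moritaequi} are checked as they should be: essential surjectivity of $\pi_1$ comes from essential surjectivity of $\g$ applied to $f^0(a)$ (producing the object $(a,x,b)$ directly, so the identity arrow suffices), and fullness/faithfulness follows since the constraint $x\cdot g^1(z)=f^1(y)\cdot x'$ determines $g^1(z)=x^{-1}\cdot f^1(y)\cdot x'$ uniquely and the bijection \eqref{E equiv-bij-arrows} for $\g$ then yields a unique $z$.
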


\subsection{Strict fiber product}\label{Subs geo-fib-prdct}

In this paper we will also consider a simpler version of fiber product which we call it strict fiber product. Lemma \ref{L Go*/HK-eq-G*/HK} explains that under certain conditions, we may replace fiber products by strict fiber products.

\begin{defn}\label{D Geo-fib}
Given two strict morphisms $\f_i:\G_i=( G_i^1\rrto G_i^0)\rto\H=(H^1\rrto H^0)$ for $i=1,2$, we define the {\em strict fiber product} ${\G_1 \ctimes_{\f_1,\H,\f_2} \G_2}$ (or simply ${\G_1 \ctimes_\H \G_2}$) as
\[
{\G_1 \ctimes_\H \G_2}:= (G^1_1\times_{H^1} G_2^1\rrto G^0_1\times_{H^0} G^0_2),
\]
where
\begin{align*}
G^0_1\times_{H^0} G^0_2=\{(a,b)\mid f_1^0(a)= f_2^0(b)\in  H^0\}, \qq \text{and} \qq G^1_1\times_{H^1} G_2^1&=\{(x,y)\mid f_1^1(x)= f^1_2(y)\in  H^1\}
\end{align*}
are fiber products of sets.
\end{defn}

In the following, we will some times also write an object in ${ G^0_1\times_{H^0} G^0_2}$ in the way as \eqref{E obj-fib-prdct-1}. For example
\[
\xymatrix{a\ar@{.>}[r]^{1_{f^0_1(a)}}_{\sf H} & a'}
\in  G^0_1\times_{H^0} G^0_2.
\]
There are natural strict morphisms, also called {\em projections}:
\begin{align*}
\tilde \pi_1:{\G_1 \ctimes_\H \G_2\rto \G_1},&\qq
(a,b)\mapsto a,\qq (x,y)\mapsto x
\\
\tilde \pi_2:{\G_1 \ctimes_\H \G_2\rto \G_2},&\qq
(a,b)\mapsto b,\qq (x,y)\mapsto y.
\end{align*}
It is clear that $\f_1\circ\tilde\pi_1=\f_2\circ \tilde \pi_2$.

There is an injective strict morphism connecting these two kinds of fiber products
\begin{eqnarray}\label{E q-geo-fbi-to-fib}
\q=(q^0,q^1):\G_1 \ctimes_\H \G_2&\rto& \G_1\times_\H \G_2,\nonumber\\
(a,b)&\mapsto&(a,1_{ f_1^0(a)},b),\\
(x,y)&\mapsto&(x,(s(x),1_{ f^0_1(s(x))},s(y)),y).\nonumber
\end{eqnarray}
Set
\[
U^0:=\mathrm{Im}\, q^0\subseteq (\G_1\times_\H \G_2)^0.
\]
Via $\sf q$ we could view $\G_1\ctimes_\H \G_2$ as a sub-groupoid of $\G_1\times_\H \G_2$. In fact we have

\begin{lem}\label{L Geofib-as-subgrpd}
${\q: \G_1\ctimes_\H\G_2\rto (\G_1\times_\H\G_2)}|_{U^0}$ is an isomorphism.
\end{lem}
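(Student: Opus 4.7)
The plan is to verify that $\q$ is a functor and that both $q^0$ and $q^1$ are bijections onto their targets in the restricted groupoid $(\G_1\times_\H\G_2)|_{U^0}$. Since an isomorphism of groupoids is precisely a functor that is bijective on objects and on arrows, this will suffice.

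First I would handle the object level. By the very definition $U^0:=\mathrm{Im}\,q^0$, the map $q^0$ is surjective onto $U^0$. Injectivity is immediate: from the triple $(a,1_{f^0_1(a)},b)$ one reads off both $a$ and $b$, so $q^0(a,b)=q^0(a',b')$ forces $(a,b)=(a',b')$.

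Next I would analyze arrows in $(\G_1\times_\H\G_2)|_{U^0}$ between two objects $(a,1_{f^0_1(a)},b)$ and $(a',1_{f^0_1(a')},b')$ of $U^0$. According to \eqref{E arrow-in-F*/H*G}, such an arrow is a pair $(y,z)$ with $y\in G^1_1(a,a')$ and $z\in G^1_2(b,b')$ satisfying
\[
1_{f^0_1(a)}\cdot f^1_2(z)=f^1_1(y)\cdot 1_{f^0_1(a')}.
\]
Since the middle entries are units, this commutativity condition reduces exactly to $f^1_1(y)=f^1_2(z)$, which is the defining condition for $(y,z)\in G^1_1\times_{H^1}G^1_2$. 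Therefore $q^1$ sets up a bijection between the arrow set of $\G_1\ctimes_\H\G_2$ and the arrow set of $(\G_1\times_\H\G_2)|_{U^0}$; explicitly, the inverse sends $(y,(a,1_{f^0_1(a)},b),z)$ back to $(y,z)$.

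Finally I would check that $\q$ respects the groupoid structure — source, target, unit, inverse, and composition. These checks are all routine from the definitions: for example, $\q$ sends the unit $(1_a,1_b)$ at $(a,b)$ to $(1_a,(a,1_{f^0_1(a)},b),1_b)$, which is the unit at $q^0(a,b)$, and composition in $\G_1\times_\H\G_2$ is computed componentwise so it is automatically compatible with the componentwise composition in $\G_1\ctimes_\H\G_2$. The only real content in the whole argument is the observation above that the commutative-square condition in \eqref{E arrow-in-F*/H*G} collapses to the arrow-level fiber-product condition once the middle arrow is forced to be an identity; everything else is bookkeeping.
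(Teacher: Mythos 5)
Your proposal is correct and follows essentially the same route as the paper: the paper likewise notes that $q^0$ and $q^1$ are injective and reduces the whole lemma to the surjectivity of $q^1$ on arrows, which it establishes by exactly your key observation that the square condition $1_{f^0_1(a)}\cdot f^1_2(z)=f^1_1(y)\cdot 1_{f^0_1(a')}$ collapses to $f^1_1(y)=f^1_2(z)$. Your additional explicit checks of object-level bijectivity and functoriality are routine bookkeeping that the paper leaves implicit.
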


\begin{proof}
Since both $q^0$ and $q^1$ are injective, we only need to show that
\[
 q^1:({\G_1\ctimes_\H\G_2})^1((a,b),(a',b'))\rto
({\G_1\times_\H\G_2})^1((a,1_{f^0_1(a)},b),(a',1_{f^0_1(a')},b'))
\]
is surjective. Suppose we have an arrow
\begin{align}\label{E arrow-in-lemma-2.11}
(x,(a,1_{f^0_1(a)},b),y):
(a,1_{f^0_1(a)},b)\rto (a',1_{f^0_1(a')},b')
\end{align}
in $(\G_1\times_\H\G_2)^1$. Then $f^1_1(x)\cdot 1_{ f^0_1(a')}=1_{ f^0_1(a)}\cdot  f_2^1(y)$, i.e. $f^1_1(x)=f_2^1(y)$. Hence we get an arrow $(x,y)$ in $\G_1\ctimes_\H \G_2$ which is a preimage of the arrow \eqref{E arrow-in-lemma-2.11}.
\end{proof}

\begin{defn}\label{D full-equiv}
An equivalence $\psi:\sf G\rto H$ is called a {\em full-equivalence} if $\psi^0$ is surjective.
\end{defn}

\begin{lem}\label{L Go*/HK-eq-G*/HK}
When one of $\msf f_1$ and $\msf f_2$ is a full-equivalence, $\q: \G_1 \ctimes_\H \G_2\rto \G_1\times_\H \G_2$ is an equivalence. In that case, $({\G_1 \times_\H \G_2})|_{U^0}$ is equivalent to $\G_1 \times_\H \G_2$.
\end{lem}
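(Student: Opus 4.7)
The plan is to combine Lemma \ref{L Geofib-as-subgrpd} with Lemma \ref{L inclusion-equiv} so that $\q$ factors as the composition of an isomorphism with an inclusion that is an equivalence. Concretely, by Lemma \ref{L Geofib-as-subgrpd}, $\q$ restricts to an isomorphism onto $(\G_1\times_\H\G_2)|_{U^0}$, so it suffices to show that the inclusion
\[
\iota:(\G_1\times_\H\G_2)|_{U^0}\hookrightarrow \G_1\times_\H\G_2
\]
is an equivalence and then appeal to Lemma \ref{L eqs-compose-eq}.

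To apply Lemma \ref{L inclusion-equiv} to $\iota$, I need to verify that $|U^0|=|\G_1\times_\H\G_2|$ in the coarse space of $\G_1\times_\H\G_2$. In other words, every object $(a,x,b)\in(\G_1\times_\H\G_2)^0$ must be connected by an arrow of $\G_1\times_\H\G_2$ to some object lying in $U^0$. Without loss of generality assume $\f_1$ is the full-equivalence, so $f_1^0$ is surjective and $\f_1$ is full and faithful by Remark \ref{R equiv-bij-arrows}.

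Given $(a,x,b)$ with $x\in H^1(f_1^0(a),f_2^0(b))$, pick $a'\in G_1^0$ with $f_1^0(a')=f_2^0(b)$ using surjectivity of $f_1^0$. The arrow $x$ now lies in $H^1(f_1^0(a),f_1^0(a'))$, and by fullness and faithfulness of $\f_1$ there is a unique $y\in G_1^1(a,a')$ with $f_1^1(y)=x$. Then $(y,(a,x,b),1_b)$ is a well-defined arrow of $\G_1\times_\H\G_2$, since the compatibility relation $x\cdot f_2^1(1_b)=f_1^1(y)\cdot 1_{f_1^0(a')}$ becomes the tautology $x=x$. Its target is
\[
(a',1_{f_1^0(a')},b)\in U^0,
\]
which settles $|U^0|=|\G_1\times_\H\G_2|$.

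Hence Lemma \ref{L inclusion-equiv} applies and $\iota$ is an equivalence; composing with the isomorphism from Lemma \ref{L Geofib-as-subgrpd} gives the first assertion, while the second assertion is just the statement that $\iota$ itself is an equivalence. The only step that requires any thought is the explicit construction of the connecting arrow, but the compatibility check collapses because one leg of the fiber product is forced to be an identity in $H^1$.
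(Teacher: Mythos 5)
Your proof is correct and follows essentially the same route as the paper: the paper likewise reduces, via Lemma \ref{L inclusion-equiv}, to showing that every object $(a,x,b)$ is connected to an object of $U^0$, and constructs exactly the arrow $((f_1^1)\inv(x),(a,x,b),1_b):(a,x,b)\rto(a',1_{f_1^0(a')},b)$ after choosing a preimage $a'$ of $f_2^0(b)$ under the surjective $f_1^0$. Your explicit factorization of $\q$ through the isomorphism of Lemma \ref{L Geofib-as-subgrpd} followed by the inclusion merely spells out a step the paper leaves implicit.
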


\begin{proof}
By Lemma \ref{L inclusion-equiv} we only need to show that every object $(a,x,b)\in(\G_1\times_\H \G_2)^0$ is connected to an object in $U^0$. Without loss of generality, we assume that $\f_1$ is a full-equivalence. Hence $f^0_1$ is surjective.

Take a pre-image $a'$ of $f_2^0(b)$ under $f_1^0$, i.e. $f_1^0(a')=f_2^0(b)$. Then $(a',1_{f_1^0(a')},b)\in U^0$. Since $x:f_1^0(a)\rto f_2^0(b)=f_1^0(a')$ and $\f_1$ is an equivalence, we get a unique arrow $(f_1^1)\inv(x):a\rto a'$. Then one can see that
\[
((f_1^1)\inv(x),(a,x,b),1_b):(a,x,b)\rto (a',1_{f_1^0(a')},b)
\]
is an arrow in $(\G_1\times_\H \G_2)^1$ that connects $(a,x,b)$ with $(a',1_{ f^0_1(a')},b)\in U^0$.
\end{proof}

We also have analogues of Lemma \ref{L eqs-compose-eq} and
Lemma \ref{L G*/HK-equiv G}.

\begin{lem}\label{L feqs-compose-feq}
Let $\psi:\sf G\rto H$ and $\phi:\sf H\rto N$ both be full-equivalences. Then the composition $\phi\circ\psi:\sf G\rto N$ is also a full-equivalence.
\end{lem}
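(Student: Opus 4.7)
The plan is very short, since the statement decomposes into two independent pieces that have essentially already been handled in the excerpt.

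First, I would observe that the notion of full-equivalence is the conjunction of two conditions: being an equivalence of groupoids, and having surjective map on objects. So it suffices to verify each condition separately for the composition $\phi\circ\psi$.

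For the equivalence part, I would simply cite Lemma \ref{L eqs-compose-eq}: the composition of two equivalences is an equivalence. There is nothing new to do, since both $\psi$ and $\phi$ are in particular equivalences by assumption.

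For the surjectivity of the object map, I would note that $(\phi\circ\psi)^0 = \phi^0\circ\psi^0$ by the definition of composition of strict morphisms (a functor is determined by its action on objects and arrows, and these compose componentwise). Since both $\psi^0 : G^0 \to H^0$ and $\phi^0 : H^0 \to N^0$ are surjective by the full-equivalence hypothesis, their set-theoretic composition is surjective as well. Combining this with the equivalence property from the previous step gives that $\phi\circ\psi$ is a full-equivalence in the sense of Definition \ref{D full-equiv}. There is no substantive obstacle here; the lemma is a direct bookkeeping consequence of Lemma \ref{L eqs-compose-eq} together with surjectivity of composition of surjective maps.
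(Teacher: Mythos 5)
Your proof is correct, and it is exactly the argument the paper intends: the lemma is stated there without proof as an immediate analogue of Lemma \ref{L eqs-compose-eq}, and your decomposition into (i) composition of equivalences is an equivalence and (ii) composition of surjective object maps is surjective is the complete justification, matching Definition \ref{D full-equiv}.
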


\begin{lem}\label{L Go*/HK-equiv-G}
When $\msf f_2$ is a full-equivalence, $\tilde\pi_1:{\G_1 \ctimes_\H \G_2\rto \G_1}$ is a full-equivalence. When $\msf f_1$ is a full-equivalence, $\tilde\pi_2:{\G_1 \ctimes_\H \G_2\rto \G_2}$ is a full-equivalence.
\end{lem}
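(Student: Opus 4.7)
My plan is to verify, directly from the definitions, the two conditions in Definition \ref{D equi&Moritaequi} for $\tilde\pi_1$ together with surjectivity of $\tilde\pi_1^0$. I will prove the first assertion; the second follows by symmetry (swap the roles of indices $1$ and $2$).

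First I would check surjectivity of $\tilde\pi_1^0:G_1^0\times_{H^0}G_2^0\to G_1^0$. Given any $a\in G_1^0$, since $\f_2$ is a full-equivalence the map $f_2^0:G_2^0\to H^0$ is surjective, so we can choose $b\in G_2^0$ with $f_2^0(b)=f_1^0(a)$; then $(a,b)\in G_1^0\times_{H^0}G_2^0$ maps to $a$. This simultaneously handles the ``full'' part of ``full-equivalence'' and also yields essential surjectivity (condition (1) of Definition \ref{D equi&Moritaequi}), since the identity arrow $1_a$ exhibits $a$ as isomorphic to $\tilde\pi_1^0(a,b)$.

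Next I would verify that $\tilde\pi_1$ is full and faithful (condition (2)), i.e.\ that for any $(a,b),(a',b')\in(\G_1\ctimes_\H\G_2)^0$ the map
\[
\tilde\pi_1^1:(\G_1\ctimes_\H\G_2)^1((a,b),(a',b'))\longrightarrow G_1^1(a,a'),\qquad (x,y)\mapsto x,
\]
is a bijection. Given $x\in G_1^1(a,a')$, the element $f_1^1(x)\in H^1(f_1^0(a),f_1^0(a'))=H^1(f_2^0(b),f_2^0(b'))$ has, by condition (2) for the equivalence $\f_2$ (Remark \ref{R equiv-bij-arrows}(2)), a unique preimage $y\in G_2^1(b,b')$ under $f_2^1$. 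Then $(x,y)$ is an arrow of $\G_1\ctimes_\H\G_2$ since $f_1^1(x)=f_2^1(y)$, and it is the unique preimage of $x$. Hence $\tilde\pi_1^1$ is a bijection, and combined with the surjectivity above, $\tilde\pi_1$ is a full-equivalence.

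There is no real obstacle here; the statement is essentially a bookkeeping check. The only point that requires a moment's thought is that the fullness and faithfulness of $\tilde\pi_1$ rely on condition (2) of $\f_2$ being an equivalence (not on the surjectivity of $f_2^0$), while the surjectivity of $\tilde\pi_1^0$ uses exactly the ``full'' hypothesis $f_2^0$ surjective. Thus both aspects of ``full-equivalence'' for $\f_2$ are needed, and each is used for a distinct part of the conclusion.
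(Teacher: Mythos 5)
Your verification is correct and complete: the surjectivity of $\tilde\pi_1^0$ follows from that of $f_2^0$, and the bijectivity on hom-sets follows from condition (2) for $\f_2$ via Remark \ref{R equiv-bij-arrows}, exactly as the definitions require. The paper states this lemma without proof as a routine analogue of Lemma \ref{L G*/HK-equiv G}, and your direct check is precisely the intended argument.
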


\subsection{Canonical isomorphisms for (strict) fiber products}

\begin{lem}\label{L fib-prdct-asso}
Given four strict morphisms $\sf f:H\rto G, g:K\rto G$, $\sf u:K\rto L$, and $\sf v:M\rto L$, we have two canonical isomorphisms and the following commutative diagram
\[\xymatrix{
(\H\ctimes_{\sf f,G,g}\K)\ctimes_{\u\circ\pi_2,\L,\v}\M \ar[r]^{\cong} \ar[d]_{\q} & \H\ctimes_{\f,\G,\g\circ \pi_1}(\K\ctimes_{\u,\L,\v}\M) \ar[d]^\q
\\
{\sf (H\times_{f,G,g}K)}\times_{\u\circ\pi_2,\L,\v}\M \ar[r]^{\cong} & \H\times_{\f,\G,\g\circ \pi_1}(\sf K\times_{u,L,v}M).}
\]
\end{lem}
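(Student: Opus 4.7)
The plan is to write down the two horizontal isomorphisms explicitly on objects and arrows, observe that they are functorial by inspection, and then trace the square by direct substitution into the explicit formula \eqref{E q-geo-fbi-to-fib} for $\q$. Both rows are, at heart, instances of the obvious associativity of ordinary fiber products of sets (applied separately to the object sets and the arrow sets), translated into the two flavours of fiber product of \S\ref{Subs fib-prdct} and \S\ref{Subs geo-fib-prdct}.

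First I handle the top row. Unpacking Definition \ref{D Geo-fib} twice, an object of $(\H\ctimes_\G\K)\ctimes_\L\M$ is a datum $((a,b),c)$ with $a\in H^0,\; b\in K^0,\; c\in M^0$ subject to $f^0(a)=g^0(b)$ and $u^0(b)=v^0(c)$, while an object of $\H\ctimes_\G(\K\ctimes_\L\M)$ is a datum $(a,(b,c))$ with exactly the same two constraints. Similarly, an arrow on each side is encoded by a triple $(\eta,\zeta,\omega)\in H^1\times K^1\times M^1$ satisfying $f^1(\eta)=g^1(\zeta)$ and $u^1(\zeta)=v^1(\omega)$. The reassociation $((a,b),c)\leftrightarrow(a,(b,c))$, together with its arrow analogue, therefore defines an isomorphism of groupoids, evidently compatible with source, target, units and composition.

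For the bottom row the data carry extra arrows in $\G$ and $\L$, but the argument is formally identical. An object of $(\H\times_\G\K)\times_\L\M$ is $((a,x,b),y,c)$ with $x\in G^1(f^0(a),g^0(b))$ and $y\in L^1(u^0(b),v^0(c))$, while an object of $\H\times_\G(\K\times_\L\M)$ is $(a,x,(b,y,c))$ with the identical constraints. An arrow on each side is recorded by $(\eta,\zeta,\omega)\in H^1\times K^1\times M^1$ subject to the same two commutative-square identities $x\cdot g^1(\zeta)=f^1(\eta)\cdot x'$ in $G^1$ and $y\cdot v^1(\omega)=u^1(\zeta)\cdot y'$ in $L^1$. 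Reassociation again gives the bottom horizontal isomorphism, functorial by inspection.

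Finally I check the square. Applying \eqref{E q-geo-fbi-to-fib} to both the outer and the inner strict fiber product, the left vertical sends $((a,b),c)$ to $((a,1_{f^0(a)},b),1_{u^0(b)},c)$, where the outer identity is $1_{(\u\circ\pi_2)^0(a,b)}=1_{u^0(b)}$, and the right vertical sends $(a,(b,c))$ to $(a,1_{f^0(a)},(b,1_{u^0(b)},c))$. Both composites in the square then agree with the assignment $((a,b),c)\mapsto(a,1_{f^0(a)},(b,1_{u^0(b)},c))$, and the same matching works on arrows. The only real obstacle is notational: one must carefully keep the quintuple-notation for arrows in an iterated ordinary fiber product straight, but once the parentheses are placed all verifications reduce to equalities in $G^0,\,L^0,\,G^1,\,L^1$ that are literally true on the nose.
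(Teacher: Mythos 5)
Your proposal is correct and fills in exactly the routine verification the paper omits (the lemma is stated without proof as a canonical-isomorphism fact): unpacking both iterated (strict) fiber products shows each side is the same set of triples subject to the same two constraints, and the explicit formula \eqref{E q-geo-fbi-to-fib}, applied to the inner and outer products, makes the square commute on the nose. Nothing further is needed.
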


\begin{lem}\label{L Gc*/GH-cong-H}
For a strict morphism $\sf f:G\rto H$, there are canonical isomorphisms $\sf H\ctimes_{id_H,H,f}G\cong G\cong  G\ctimes_{f,H,id_H}H$ given by projections.
\end{lem}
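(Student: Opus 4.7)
The plan is to unpack the definition of strict fiber product in the two cases where one leg is the identity, and observe that the defining constraints become so tight that the projection is literally bijective on both objects and arrows. Concretely, for $\sf H\ctimes_{id_H,H,f}G$ the object space is
\[
\{(h,a)\in H^0\times G^0 \mid id_{H^0}(h)=f^0(a)\}=\{(f^0(a),a)\mid a\in G^0\},
\]
and analogously the arrow space consists of pairs $(x,y)\in H^1\times G^1$ with $x=f^1(y)$, i.e.\ pairs of the form $(f^1(y),y)$ with $y\in G^1$. Thus the second projection $\tilde\pi_2:\sf H\ctimes_H G\rto G$ is a set-theoretic bijection on both $G^0$-data and $G^1$-data, with explicit inverse $\sf j:G\rto H\ctimes_H G$ given by
\[
a\mapsto (f^0(a),a), \qquad y\mapsto (f^1(y),y).
\]

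Next I would check that $\sf j$ (equivalently $\tilde\pi_2$) respects the groupoid structure. The source and target maps match because the source/target of $(f^0(a),a)$ on the fiber-product side is $a$ by construction of $\tilde\pi_2$, and composition in the strict fiber product is defined componentwise, so $(f^1(y_1),y_1)\cdot(f^1(y_2),y_2)=(f^1(y_1\cdot y_2),y_1\cdot y_2)$ using that $\sf f$ is a functor; units and inverses are handled in the same componentwise manner. This shows $\tilde\pi_2$ is a strict morphism with a two-sided strict morphism inverse $\sf j$, hence an isomorphism of groupoids.

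The second isomorphism $\sf G\cong G\ctimes_{f,H,id_H}H$ follows by the symmetric argument using the first projection $\tilde\pi_1$: the condition $f^0(a)=id_{H^0}(h)$ again forces $h=f^0(a)$, so objects reduce to $(a,f^0(a))$ and arrows to $(y,f^1(y))$, and $\tilde\pi_1$ restricts to a bijection with inverse $a\mapsto(a,f^0(a))$, $y\mapsto(y,f^1(y))$.

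There is no genuine obstacle here; the only thing to be careful about is to remember that the strict fiber product uses equality in $H^0$ and $H^1$ rather than equality up to an arrow (as in the ordinary fiber product of \S\ref{Subs fib-prdct}), since otherwise one would only get an equivalence instead of an isomorphism. Once that distinction is kept in mind, the proof is just a direct unpacking of the definition in Definition \ref{D Geo-fib}.
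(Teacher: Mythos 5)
Your proof is correct, and since the paper states this lemma without proof, your direct unpacking of Definition \ref{D Geo-fib} is exactly the argument the authors leave implicit: the identity leg forces $h=f^0(a)$ on objects and $x=f^1(y)$ on arrows, so the relevant projection is bijective and componentwise compatible with all structure maps. Your closing remark correctly identifies why this yields an isomorphism (rather than merely an equivalence, as would happen with the ordinary fiber product of \S\ref{Subs fib-prdct}).
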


\section{Morphism groupoids}\label{S Mor}

In this section for each pair $(\sf G,H)$ of groupoids we construct two groupoids of morphisms, $\sf Mor(G,H)$ and $\sf FMor(G,H)$. Then we will show that these two groupoids are equivalent to each other.

\subsection{Morphism groupoids via fiber products}
\label{Subs MgVfp}

\begin{defn}\label{D morphism}
By a {\em morphism}\footnote{Such a morphism between orbifold groupoids is called an orbifold homomorphism (cf. \cite{Adem-Leida-Ruan2007}).} from $\sf G$ to $\sf H$, we mean two strict morphisms in the diagram
\[
\xymatrix{ \G & \K \ar[l]_-{\psi} \ar[r]^-{\u}  &\H}
\]
with $\psi$ being an equivalence. We denote such a morphism by $(\psi, \sf K,u): G\rhpu H$, and the set of morphisms from $\sf G$ to $\sf H$ by $\mathrm{Mor}^0(\sf G,H)$.
\end{defn}

\begin{defn}\label{D arrow}
Given two morphisms $(\psi_1, \K_1,\u_1):\sf G\rhpu H$ and $(\psi_2, \K_2,\u_2):\sf G\rhpu H$, an {\em arrow} $(\psi_1, \K_1,\u_1)\stackrel{\alpha}{\rto} (\psi_2, \K_2,\u_2)$ is a natural transformation $\u_1\circ\pi_1\stackrel{\alpha}{\Rto} \u_2\circ\pi_2$, i.e.
\[
\begin{tikzpicture}
\def \x{3}
\def \y{1}
\node (A00) at (0,0)       {$\sf G$};
\node (A10) at (\x,0)      {$\K_1\times_\G\K_2$};
\node (A11) at (\x,\y)     {$\K_1$};
\node (A1-1) at (\x,-1*\y) {$\K_2$};
\node (A30) at (3*\x,0)    {$\sf H$.};
\node at (1.8*\x,0) {$\Downarrow \alpha$};
\path (A00) edge [<-] node [auto] {$\scriptstyle{\psi_1}$} (A11);
\path (A00) edge [<-] node [auto,swap] {$\scriptstyle{\psi_2}$} (A1-1);
\path (A11) edge [->] node [auto] {$\scriptstyle{\u_1}$} (A30);
\path (A1-1) edge [->] node [auto,swap] {$\scriptstyle{\u_2}$} (A30);
\path (A10) edge [->] node [auto] {$\scriptstyle{\pi_1}$} (A11);
\path (A10) edge [->] node [auto,swap] {$\scriptstyle{\pi_2}$} (A1-1);
\end{tikzpicture}
\]
Denote by $\mathrm{Mor}^1(\G,\H)((\psi_1, \K_1,\u_1),(\psi_2, \K_2,\u_2))$, the set of all arrows from $(\psi_1, \K_1,\u_1)$ to $(\psi_2, \K_2,\u_2)$, and by
\[
\mathrm{Mor}^1(\G,\H):= \bigsqcup_{(\psi_i, \K_i,\u_i)\in\mathrm{Mor}^0(\msf G,\msf H),\, i=1,2} \mathrm{Mor}^1(\G,\H)((\psi_1,\K_1,\u_1),(\psi_2,\K_2,\u_2))
\]
the set of all arrows between morphisms from $\sf G$ to $\sf H$.
\end{defn}

In the following we will define vertical\footnote{We use ``vertical composition'' to distinguish it from another composition of arrows constructed in the next section. This also fits with the terminology in 2-category/bicategory (cf. \cite{Leinster1998,Meyer-Zhu2015,Shen2014}).} composition of arrows and show that
\[
{\sf Mor(G,H)}=(\mathrm{Mor}^1({\sf G,H})\rrto \mathrm{Mor}^0(\sf G,H))
\]
is a groupoid.

Given two arrows $\alpha_i\in \mathrm{Mor^1}(\G,\H) ((\psi_i,\K_i,\u_i),(\psi_{i+1},\K_{i+1},\u_{i+1}))$ for $i=1,2$, the vertical composition
\[
\alpha_1\bullet\alpha_2\in \mathrm{Mor^1}(\G,\H)((\psi_1,\K_1,\u_1),(\psi_3,\K_3,\u_3)),
\]
is constructed as follow.

\begin{construction}\label{C vert-compose}
Set $\K_{12}:=\K_1\times_\G\K_2$, $\K_{23}:=\K_2\times_\G\K_3$, $\K_{13}:=\K_1\times_\G\K_3$, $\K_{12,23}:=\K_{12}\times_{\K_2}\K_{23}$. We have the following diagram
\begin{align}\label{E dig-D-vert-cpos}
\bfig
    \cube|blrb|/@{->}^<>(.6){\Phi}`->`->`>/
    <1500,900>[\K_{12,23}`\K_{13}`\K_{23}` \K_3; ```\hskip 1cm]
    (700,300)|alrb|<1400,800>[\K_{12}`\K_1`\K_2`\H;```\hskip 1cm]
    |lrrr|/->`->`->`->/[```]
    \place(1000,800)[\twoar(0,-1)]
    \place(1100,770)[\alpha_1]
    \place(1000,200)[\twoar(-12,-10)]
    \place(1110,170)[\alpha_2]
    \place(1650,700)[\twoar(0,-1)]
    \place(1900,670)[\alpha_1\bullet\alpha_2?]
    \efig
\end{align}
in which all unmarked strict morphisms are natural projections from fiber products to their factors. Objects and arrows in $\K_{12,23}$ are of the form
\begin{align}\label{E arrow-W1223}
\begin{split}
\xymatrix{
k_1\ar[d]^a  \ar@{.>}[r]^x \ar@{.>}[r]_{\G}&
k_2\ar[d]^b  \ar[r]^z \ar@{.>}[r]_{\K_2}&
k_2'\ar[d]^c  \ar@{.>}[r]^y \ar@{.>}[r]_{\G}&
k_3\ar[d]^d  \\
\tilde k_1\ar@{.>}[r]^{\tilde x} \ar@{.>}[r]_{\sf G}&
\tilde k_2\ar[r]^{\tilde z} \ar@{.>}[r]_{\K_2}&
\tilde k_2'\ar@{.>}[r]^{\tilde y} \ar@{.>}[r]_{\sf G} &
\tilde k_3,}
\end{split}\end{align}
with two rows being two objects in $K^0_{12,23}$ and all four columns combine into an arrow in $K^1_{12,23}$, where
\begin{itemize}
\item $k_1,\tilde k_1\in  K_1^0$, $k_2,k_2',\tilde k_2,\tilde k_2'\in  K_2^0$, $k_3,\tilde k_3\in  K_3^0$, and

\item $a\in  K_1^1$, $b,c\in  K_2^1$, $d \in  K_3^1$, and

\item $x,y,\tilde x,\tilde y\in G^1$, $z,\tilde z\in  K_2^1$, and
\item $\psi_1^1(a)\cdot \tilde x=x\cdot \psi_2^1(b)$, $b\cdot\tilde z=z\cdot c$, $\psi_2^1(c)\cdot\tilde y=y\cdot\psi^1_3(d)$.
\end{itemize}
The strict morphism $\Phi:\K_{12,23} \rto  \K_{13}$ is given by
\begin{align}\label{eq Psi}
\begin{split}
\Phi^0(k_1,x,k_2,z,k_2', y,k_3) &= (k_1, x\cdot \phi_2^1(z)\cdot y,k_3),\\
\Phi^1(a,b,(k_1,x,k_2,z,k_2',y,k_3),c,d)&= (a,(k_1, x\cdot \phi_2^1(z)\cdot y,k_3),d).
\end{split}
\end{align}
In the cube \eqref{E dig-D-vert-cpos}, the square with vertices $\{\K_{12,23},\K_{12},\K_{23}, \K_2\}$ has a natural transformation between the two composed strict morphisms from $\K_{12,23}$ to $\K_2$. By the definition of $\Phi$ and projections, the two squares with vertices $\{\K_{12,23},\K_{12},\K_{13}, \K_1\}$ and $\{\K_{12,23},\K_{23},\K_{13}, \K_3\}$ are commutative. Therefore five faces of the cube \eqref{E dig-D-vert-cpos} have natural transformations except the face on the very right, which is the $\alpha_1\bullet\alpha_2$ that we will define.

The vertical composition $\alpha_1\bullet\alpha_2:K_{13}^0\rto H^1$ is given by
\begin{align}\label{E alpha-odot-beta}
\alpha_1\bullet\alpha_2(k_1,x,k_3) =\alpha_1(k_1,x_1,k_2)\cdot\alpha_2(k_2,x_2,k_3)
\end{align}
for some splitting of $x$ into $\psi_1^0(k_1)\xrightarrow{x_1} \psi_2^0(k_2)\xrightarrow{x_2} \psi_3^0(k_3)$ in $G^1$ with $k_2\in K_2^0$ and $x=x_1\cdot x_2$. Therefore $(k_1,x_1,k_2,1_{k_2},k_2,x_2,k_3)\in  K_{12,23}^0$ and satisfies $\Phi^0(k_1,x_1,k_2,1_{k_2},k_2,x_2,k_3)=(k_1,x,k_3)$. It is direct to verify that this definition does not depend on the choices of the splitting of $x$ and $(\psi_1,\K_1,\u_1)
\xrightarrow{\alpha_1\bullet\alpha_2} (\psi_3,\K_3,\u_3)$.
\end{construction}

\begin{lem}\label{L vert-comp-asso}
The vertical composition of arrows is associative.
\end{lem}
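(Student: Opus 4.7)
The plan is to reduce the associativity of $\bullet$ to the associativity of composition in $\H^1$ by evaluating both iterated compositions at a common object via a compatible triple splitting. Fix three composable arrows $\alpha_i:(\psi_i,\K_i,\u_i)\to(\psi_{i+1},\K_{i+1},\u_{i+1})$ for $i=1,2,3$. Both $(\alpha_1\bullet\alpha_2)\bullet\alpha_3$ and $\alpha_1\bullet(\alpha_2\bullet\alpha_3)$ are natural transformations from $\u_1\circ\pi_1$ to $\u_4\circ\pi_2$ on $\K_1\times_\G\K_4$, so it suffices to verify the two natural transformations agree on every object $(k_1,x,k_4)\in(\K_1\times_\G\K_4)^0$, where $x:\psi_1^0(k_1)\to\psi_4^0(k_4)$.

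The key observation is that since $\psi_2$ and $\psi_3$ are equivalences, hence essentially surjective, one can choose $k_2\in K_2^0$ and $k_3\in K_3^0$ together with a triple factorization $x=x_1\cdot x_2\cdot x_3$ in $G^1$ with $x_i:\psi_i^0(k_i)\to\psi_{i+1}^0(k_{i+1})$. Using the splitting $x=(x_1\cdot x_2)\cdot x_3$ with intermediate $k_3$, formula \eqref{E alpha-odot-beta} gives
$$\bigl((\alpha_1\bullet\alpha_2)\bullet\alpha_3\bigr)(k_1,x,k_4)=(\alpha_1\bullet\alpha_2)(k_1,x_1\cdot x_2,k_3)\cdot\alpha_3(k_3,x_3,k_4),$$
and then applying the formula once more to $(\alpha_1\bullet\alpha_2)(k_1,x_1\cdot x_2,k_3)$ using the splitting $x_1\cdot x_2$ with intermediate $k_2$ yields $\alpha_1(k_1,x_1,k_2)\cdot\alpha_2(k_2,x_2,k_3)$. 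Symmetrically, using the splitting $x=x_1\cdot(x_2\cdot x_3)$ with intermediate $k_2$, and then splitting $x_2\cdot x_3$ with intermediate $k_3$, I would compute
$$\bigl(\alpha_1\bullet(\alpha_2\bullet\alpha_3)\bigr)(k_1,x,k_4)=\alpha_1(k_1,x_1,k_2)\cdot\bigl(\alpha_2(k_2,x_2,k_3)\cdot\alpha_3(k_3,x_3,k_4)\bigr).$$
The two expressions are equal by associativity of the composition $\cdot$ in $\H^1$.

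The only subtle point is that each iterated composition was built from a specific choice of splitting: to compare them I must know the intermediate compositions $\alpha_1\bullet\alpha_2$ and $\alpha_2\bullet\alpha_3$ are well-defined independent of such choices, so that the particular two-fold splittings $(x_1\cdot x_2,x_3)$ and $(x_1,x_2\cdot x_3)$ can both be used. This is precisely the independence claim asserted at the end of Construction \ref{C vert-compose}, so I may invoke it without further work. I do not anticipate a serious obstacle beyond tracking notation carefully; associativity then follows.
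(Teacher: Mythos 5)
Your proposal is correct and follows essentially the same route as the paper's proof: both reduce the claim to associativity of $\cdot$ in $H^1$ by evaluating the two iterated compositions on a common triple splitting $x=x_1\cdot x_2\cdot x_3$ of an object of $(\K_1\times_\G\K_4)^0$, relying on the independence-of-splitting claim from Construction \ref{C vert-compose}. The paper merely phrases this by first computing each side with its own splitting and then observing the choices can be taken to coincide, which is the same argument.
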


\begin{proof}
Take three arrows $\alpha_i\in \mathrm{Mor^1}(\G,\H)((\psi_i,\K_i,\u_i), (\psi_{i+1},\K_{i+1},\u_{i+1}))$ for $i=1,2,3$. First of all $\alpha_1\bullet\alpha_2:({\K_1\times_\G \K_3})^0\rto H^1$ is given by $\alpha_1\bullet\alpha_2(k_1,x,k_3)= \alpha_1(k_1,x_1,k_2)\cdot\alpha_2(k_2,x_2,k_3)$ with $x=x_1\cdot x_2$. Then $(\alpha_1\bullet\alpha_2)\bullet\alpha_3: ({\K_1\times_\G\K_4})^0\rto H^1$ is given by
\begin{align*}
(\alpha_1\bullet\alpha_2)\bullet\alpha_3(k_1,x,k_4)
&=(\alpha_1\bullet\alpha_2)(k_1,x_1,k_3) \cdot \alpha_3(k_3,x_2,k_4)\\
&=\alpha_1(k_1,x_{11},k_2)\cdot\alpha_2(k_2,x_{12},k_3)
\cdot \alpha_3(k_3,x_2,k_4),
\end{align*}
with $x=x_{11}\cdot x_{12}\cdot x_2$ and $x_1=x_{11}\cdot x_{12}$.

Similarly $\alpha_1\bullet(\alpha_2\bullet\alpha_3): ({\K_1\times_\G\K_4})^0\rto H^1$ is given by
\begin{align*}
\alpha_1\bullet(\alpha_2\bullet\alpha_3)(k_1,x,k_4)
&=\alpha_1(k_1,\tilde x_1,\tilde k_2) \cdot
\alpha_2\bullet\alpha_3(\tilde k_2,\tilde x_2,k_4)\\
&=\alpha_1(k_1,\tilde x_1,\tilde k_2)\cdot
\alpha_2(\tilde k_2,\tilde x_{21},k_3) \cdot
\alpha_3(\tilde k_3,\tilde x_{22},k_4).
\end{align*}
with $x=\tilde x_1\cdot\tilde x_{21}\cdot\tilde x_{22}$ and $\tilde x_2=\tilde x_{21}\cdot \tilde x_{22}$. We could take $k_i=\tilde k_i$ for $i=2,3$, and $x_{11}=\tilde x_1, x_{12}=\tilde x_{21},x_2=\tilde x_{22}$. Therefore $(\alpha_1\bullet\alpha_2)\bullet\alpha_3 =\alpha_1\bullet(\alpha_2\bullet\alpha_3)$.
\end{proof}

There are also unit arrows with respect to vertical composition.
\begin{lem}\label{L unit-arrow}
Given a morphism $(\psi,\K,\u)\in\mathrm{Mor}^0(\sf G,H)$, there is an arrow $\sf 1_{\sf (\psi,K,u)}$ serves as the unit arrow over $\sf (\psi,K,u)$ with respect to the vertical composition $\bullet$ in $\mathrm{Mor}^1(\sf G,H)$.
\end{lem}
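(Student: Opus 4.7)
The plan is to exhibit the unit arrow explicitly. Given $(\psi, \K, \u) \in \mathrm{Mor}^0(\sf G, H)$, I would define a candidate natural transformation $\u \circ \pi_1 \stackrel{1_{(\psi,\K,\u)}}{\Rto} \u \circ \pi_2$ on objects of $\K \times_\G \K$ by
\[
1_{(\psi,\K,\u)}(k,x,k') := \u^1\bigl((\psi^1)\inv(x)\bigr),
\]
where $(\psi^1)\inv(x) \in K^1(k, k')$ is the unique preimage of $x \in G^1(\psi^0(k), \psi^0(k'))$ under the bijection in Remark \ref{R equiv-bij-arrows}(2), available because $\psi$ is an equivalence. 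By construction this lies in $H^1(\u^0(k), \u^0(k'))$, as a candidate 2-cell should.

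To verify that this assignment is indeed a natural transformation, I would take an arrow $(a, (k,x,k'), b): (k,x,k') \to (\tilde k, \tilde x, \tilde k')$ in $(\K \times_\G \K)^1$, which by definition satisfies $\psi^1(a) \cdot \tilde x = x \cdot \psi^1(b)$ in $G^1$. Appealing to \eqref{E f-inv-is-homo}, this compatibility lifts to $a \cdot (\psi^1)\inv(\tilde x) = (\psi^1)\inv(x) \cdot b$ in $K^1(k, \tilde k')$, and applying $\u^1$ together with its functoriality yields the naturality square for $1_{(\psi,\K,\u)}$.

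For the unit property, given an arrow $\alpha: (\psi_1, \K_1, \u_1) \to (\psi_2, \K_2, \u_2)$ I would compute $\alpha \bullet 1_{(\psi_2,\K_2,\u_2)}$ at an object $(k_1, x, k_2') \in (\K_1 \times_\G \K_2)^0$ via \eqref{E alpha-odot-beta}, using the trivial splitting with $k_2 := k_2'$, $x_1 := x$, and $x_2 := 1_{\psi_2^0(k_2')}$. Since $(\psi_2^1)\inv(1_{\psi_2^0(k_2')}) = 1_{k_2'}$ and $\u_2^1(1_{k_2'}) = 1_{\u_2^0(k_2')}$, the second factor is an identity, so the product collapses to $\alpha(k_1, x, k_2')$. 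The symmetric splitting $x = 1_{\psi_1^0(k_1)} \cdot x$ establishes $1_{(\psi_1,\K_1,\u_1)} \bullet \alpha = \alpha$. The only step that genuinely uses the hypotheses beyond unwinding definitions is the naturality check, which hinges on the bijectivity of $\psi^1$ on hom-sets; everything else is a direct consequence of the definition of $\bullet$ and functoriality.
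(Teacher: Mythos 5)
Your proposal is correct and follows exactly the paper's route: the defining formula $1_{(\psi,\K,\u)}(k,x,k') = \u^1((\psi^1)\inv(x))$ is precisely the paper's \eqref{E def-1u}, and the paper simply asserts that this is a natural transformation serving as the unit. You additionally carry out the naturality check (via the lifted commutativity \eqref{E f-inv-is-homo}) and the unit computation with the trivial splittings $x = x\cdot 1$ and $x = 1\cdot x$, which are the verifications the paper leaves implicit; both are done correctly.
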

\begin{proof}
$\sf 1_{(\psi,K,u)}$ is a natural transformation $\u\circ\pi_1\Rto \u\circ \pi_2$, which as a map ${\sf 1_{(\psi,K,u)}}:({\sf K\times_G K})^0\rto H^1$ is given by
\begin{align}\label{E def-1u}
{\sf 1}_{\sf (\psi,K,u)}(k,x,k'):= u^1((\psi^1)\inv(x)).
\end{align}
\end{proof}

The inverse arrow of an arrow also exists.

\begin{lem}\label{L inv-arrow}
Given an arrow $\alpha\in\mathrm{Mor^1}(\G,\H)((\psi_1,\K_1,\u_1),(\psi_2,\K_2, \u_2))$, there is a natural induced arrow $\alpha\inv\in \mathrm{Mor^1}(\G,\H)((\psi_2,\K_2, \u_2),(\psi_1,\K_1,\u_1))$ satisfying
\[
\alpha\bullet\alpha\inv={\sf 1}_{(\psi_1,\K_1,\u_1)}, \qq\mbox{and}\qq
\alpha\inv\bullet\alpha={\sf 1}_{(\psi_2,\K_2,\u_2)}.
\]
We call $\alpha\inv$ the inverse arrow of $\alpha$ with respective to the vertical composition $\bullet$.
\end{lem}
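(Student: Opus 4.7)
The plan is to define $\alpha\inv$ pointwise by inverting the component arrows of $\alpha$. Explicitly, for $(k_2,x,k_1)\in(\K_2\times_\G\K_1)^0$, set
\[
\alpha\inv(k_2,x,k_1):=\alpha(k_1,x\inv,k_2)\inv.
\]
Since $\alpha(k_1,x\inv,k_2)$ is an arrow $\u_1^0(k_1)\rto\u_2^0(k_2)$, its inverse is an arrow $\u_2^0(k_2)\rto\u_1^0(k_1)$, which matches the source $\u_2\circ\tilde\pi_1$ and target $\u_1\circ\tilde\pi_2$ required for a natural transformation on $\K_2\times_\G\K_1$.

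First I would verify that $\alpha\inv$ is actually a natural transformation. An arrow $(b,(k_2,x,k_1),a)$ in $(\K_2\times_\G\K_1)^1$ from $(k_2,x,k_1)$ to $(\tilde k_2,\tilde x,\tilde k_1)$ is exactly a pair $(b,a)\in K_2^1\times K_1^1$ satisfying $x\cdot\psi_1^1(a)=\psi_2^1(b)\cdot\tilde x$. This identity is precisely the compatibility condition making $(a,(k_1,x\inv,k_2),b)$ an arrow $(k_1,x\inv,k_2)\rto(\tilde k_1,\tilde x\inv,\tilde k_2)$ in $(\K_1\times_\G\K_2)^1$ in the sense of \eqref{E arrow-in-F*/H*G}. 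Applying the naturality of $\alpha$ to that arrow yields $\u_1^1(a)\cdot\alpha(\tilde k_1,\tilde x\inv,\tilde k_2)=\alpha(k_1,x\inv,k_2)\cdot\u_2^1(b)$, and inverting both sides (together with a routine rearrangement) gives $\u_2^1(b)\cdot\alpha\inv(\tilde k_2,\tilde x,\tilde k_1)=\alpha\inv(k_2,x,k_1)\cdot\u_1^1(a)$, which is the naturality of $\alpha\inv$.

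Next I would check $\alpha\bullet\alpha\inv={\sf 1}_{(\psi_1,\K_1,\u_1)}$. At an object $(k_1,x,k_1')\in(\K_1\times_\G\K_1)^0$, pick any splitting $x=x_1\cdot x_2$ with $x_1\colon\psi_1^0(k_1)\rto\psi_2^0(k_2)$ and $x_2\colon\psi_2^0(k_2)\rto\psi_1^0(k_1')$; such a $k_2$ and splitting exist because $\psi_2$ is essentially surjective. By formula \eqref{E alpha-odot-beta} and the definition of $\alpha\inv$,
\[
(\alpha\bullet\alpha\inv)(k_1,x,k_1')=\alpha(k_1,x_1,k_2)\cdot\alpha(k_1',x_2\inv,k_2)\inv.
\]
Now set $a:=(\psi_1^1)\inv(x)\colon k_1\rto k_1'$, which exists and is unique since $\psi_1$ is full and faithful. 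Then $(a,(k_1,x_1,k_2),1_{k_2})$ is an arrow $(k_1,x_1,k_2)\rto(k_1',x_2\inv,k_2)$ in $(\K_1\times_\G\K_2)^1$, since $x_1\cdot\psi_2^1(1_{k_2})=x_1=x\cdot x_2\inv=\psi_1^1(a)\cdot x_2\inv$. Naturality of $\alpha$ applied to this arrow gives $\u_1^1(a)\cdot\alpha(k_1',x_2\inv,k_2)=\alpha(k_1,x_1,k_2)$; rearranging,
\[
\alpha(k_1,x_1,k_2)\cdot\alpha(k_1',x_2\inv,k_2)\inv=\u_1^1((\psi_1^1)\inv(x))={\sf 1}_{(\psi_1,\K_1,\u_1)}(k_1,x,k_1'),
\]
as desired. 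The other identity $\alpha\inv\bullet\alpha={\sf 1}_{(\psi_2,\K_2,\u_2)}$ is obtained by an entirely symmetric argument, using essential surjectivity of $\psi_1$ and full-faithfulness of $\psi_2$ instead.

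No serious obstacle is expected: the whole proof reduces to one application of the naturality of $\alpha$ after choosing a convenient splitting. The only mild subtlety is ensuring that the required splitting $(x_1,x_2)$ and the unique preimage $(\psi_1^1)\inv(x)$ both exist, which is supplied respectively by essential surjectivity of $\psi_2$ and by fullness and faithfulness of $\psi_1$ (Remark \ref{R equiv-bij-arrows}).
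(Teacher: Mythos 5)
Your proposal is correct and uses exactly the same formula $\alpha\inv(k_2,x,k_1)=\alpha(k_1,x\inv,k_2)\inv$ as the paper, whose proof simply states this definition and leaves the verifications to the reader; you have filled in the naturality check and the two composition identities correctly (including the right use of essential surjectivity of $\psi_2$ resp.\ $\psi_1$ for the splittings and full-faithfulness of $\psi_1$ resp.\ $\psi_2$ for the preimages). The only quibble is notational: the projections out of the ordinary fiber product $\K_2\times_\G\K_1$ should be written $\pi_1,\pi_2$ rather than $\tilde\pi_1,\tilde\pi_2$, which the paper reserves for strict fiber products.
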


\begin{proof}
By definition $\alpha$ is a natural transformation $\u_1\circ \pi_1\stackrel{\alpha}{\Rto} \u_2\circ \pi_2:\K_1\times_\G \K_2\rto \H$. Then we see that $\u_2\circ \pi_1\stackrel{\alpha\inv}{\Rto} \u_1\circ \pi_2: \K_2\times_\G \K_1\rto \H$ is $\alpha\inv(k_2,x,k_1):=\alpha(k_1,x\inv,k_2)\inv$.
\end{proof}

Combining Lemma \ref{L vert-comp-asso}, Lemma \ref{L unit-arrow} and Lemma \ref{L inv-arrow} we get

\begin{theorem}\label{T Mor-groupoid}
For each pair $(\G,\H)$ of groupoids,
$\sf Mor(G,H)$ is a groupoid.
\end{theorem}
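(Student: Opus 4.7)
The plan is to verify the four defining axioms of a groupoid for the pair $(\mathrm{Mor}^1({\sf G},{\sf H})\rrto\mathrm{Mor}^0({\sf G},{\sf H}))$, using the pieces already assembled in Construction \ref{C vert-compose} and the three preceding lemmas. The source and target maps are tautological from Definition \ref{D arrow}: an arrow $\alpha\in\mathrm{Mor}^1(\G,\H)((\psi_1,\K_1,\u_1),(\psi_2,\K_2,\u_2))$ has source $(\psi_1,\K_1,\u_1)$ and target $(\psi_2,\K_2,\u_2)$. The partial composition $\bullet$ from Construction \ref{C vert-compose} is defined exactly on composable pairs, i.e.\ pairs $(\alpha_1,\alpha_2)$ such that the target of $\alpha_1$ equals the source of $\alpha_2$, and sends them into $\mathrm{Mor}^1(\G,\H)((\psi_1,\K_1,\u_1),(\psi_3,\K_3,\u_3))$, which has the expected source and target.

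Next I would invoke the three lemmas in order. Associativity of $\bullet$ is Lemma \ref{L vert-comp-asso}; this gives axiom (i) of a category. For the unit axiom, Lemma \ref{L unit-arrow} produces a distinguished arrow ${\sf 1}_{(\psi,\K,\u)}$ over each object, given by the formula \eqref{E def-1u}. It remains to check ${\sf 1}_{(\psi_1,\K_1,\u_1)}\bullet\alpha=\alpha$ and $\alpha\bullet{\sf 1}_{(\psi_2,\K_2,\u_2)}=\alpha$ for any $\alpha\in\mathrm{Mor}^1(\G,\H)((\psi_1,\K_1,\u_1),(\psi_2,\K_2,\u_2))$. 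This follows by applying the splitting formula \eqref{E alpha-odot-beta} with the trivial splitting $x=1_{\psi_1^0(k_1)}\cdot x$ (respectively $x=x\cdot 1_{\psi_2^0(k_2)}$) and using that $\u_i^1$ sends identity arrows to identity arrows; independence of the splitting choice, already established within Construction \ref{C vert-compose}, makes this a one-line verification. Finally, Lemma \ref{L inv-arrow} supplies an inverse $\alpha^{-1}$ with $\alpha\bullet\alpha^{-1}={\sf 1}_{(\psi_1,\K_1,\u_1)}$ and $\alpha^{-1}\bullet\alpha={\sf 1}_{(\psi_2,\K_2,\u_2)}$, which is Definition \ref{D Groupoid}(2).

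There is no real obstacle beyond bookkeeping; the three lemmas were tailored precisely to match the three groupoid axioms. The only place one might slip is in the unit verification, where the arrow ${\sf 1}_{(\psi,\K,\u)}$ is defined on $(\K\times_{\sf G}\K)^0$ by $(k,x,k')\mapsto \u^1((\psi^1)^{-1}(x))$, so one must confirm that applying \eqref{E alpha-odot-beta} to $\alpha\bullet{\sf 1}_{(\psi_2,\K_2,\u_2)}$ at a point $(k_1,x,k_2')$ and splitting $x=x\cdot 1$ reproduces $\alpha(k_1,x,k_2)\cdot\u_2^1(1_{k_2})=\alpha(k_1,x,k_2)$, and symmetrically on the left. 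Combining these observations, $\sf Mor(G,H)$ satisfies all the axioms of Definition \ref{D Groupoid} and is therefore a groupoid.
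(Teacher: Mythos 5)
Your proposal is correct and follows essentially the same route as the paper, which proves the theorem by simply combining Lemma \ref{L vert-comp-asso} (associativity), Lemma \ref{L unit-arrow} (units) and Lemma \ref{L inv-arrow} (inverses). The extra detail you supply on verifying the unit axiom via the trivial splitting $x = 1_{\psi_1^0(k_1)}\cdot x$ is a reasonable elaboration of what the paper leaves implicit, not a different argument.
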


\subsection{Morphism groupoids via strict fiber products}
\label{Subs MgVsfp}

Now we modify the construction in previous subsection via replacing all fiber products by strict fiber products to construct another morphism groupoid for each pair of groupoids.

\begin{defn}\label{D full-mor}
We call a morphism $(\psi,\sf K,u):\sf G\rhpu H$ a {\em full-morphism} if $\psi$ is a full-equivalence.
\end{defn}

We denote the set of full-morphisms from $\sf G$ to $\sf H$ by $\mathrm{FMor}^0(\sf G,H)$. Hence $\mathrm{FMor}^0(\sf G,H)\subseteq \mathrm{Mor}^0(G,H)$. We could restrict the groupoid $\sf Mor(G,H)$ to $\mathrm{FMor}^0(\sf G,H)$ to get a groupoid. Instead, we use strict fiber products to define arrows between full-morphisms to get a new groupoid.

\begin{defn}\label{D ful-pre-arrow}
For any two full-morphisms $(\psi_1, \K_1,\u_1):G\rhpu H$ and $(\psi_2, \K_2,\u_2):G\rhpu H$, an {\em arrow} $(\psi_1,\K_1,\u_1)\stackrel{\alpha}{\rto} (\psi_2,\K_2,\u_2)$ is a natural transformation $\alpha$ from the strict morphism $\u_1\circ\tilde \pi_1$ to the strict morphism $\u_2\circ\tilde \pi_2$ in the following diagram
\[
\begin{tikzpicture}
\def \x{3}
\def \y{1}
\node (A00) at (0,0)       {$\sf G$};
\node (A10) at (\x,0)      {$\K_1\ctimes_\G \K_2$};
\node (A11) at (\x,\y)     {$\K_1$};
\node (A1-1) at (\x,-1*\y) {$\K_2$};
\node (A30) at (3*\x,0)    {$\sf H$};
\node at (1.8*\x,0) {$\Downarrow \alpha$};
\path (A00) edge [<-] node [auto] {$\scriptstyle{\psi_1}$} (A11);
\path (A00) edge [<-] node [auto,swap] {$\scriptstyle{\psi_2}$} (A1-1);
\path (A11) edge [->] node [auto] {$\scriptstyle{\u_1}$} (A30);
\path (A1-1) edge [->] node [auto,swap] {$\scriptstyle{\u_2}$} (A30);
\path (A10) edge [->] node [auto] {$\scriptstyle{\tilde \pi_1}$} (A11);
\path (A10) edge [->] node [auto,swap] {$\scriptstyle{\tilde \pi_2}$} (A1-1);
\end{tikzpicture}
\]
where $\tilde \pi_i:\K_1\ctimes_\G\K_2\rto  \K_i, i=1,2,$ are the projections.

Denote by $\mathrm{FMor}^1(\G,\H)((\psi_1,\K_1,\u_1),(\psi_2,\K_2,\u_2))$ the set of arrows from $(\psi_1,\K_1,\u_1)$ to $(\psi_2,\K_2,\u_2)$, and set
\[
\mathrm{FMor}^1(\G,\H):=
\bigsqcup_{(\psi_i, \K_i,\u_i)\in\mathrm{FMor}^0(\G,\H),\, i=1,2}
\mathrm{FMor}^1(\G,\H)((\psi_1,\K_1,\u_1),(\psi_2,\K_2,\u_2))
\]
\end{defn}

Given two arrows between full-morphisms $\alpha_i\in\mathrm{FMor}^1(\G,\H)((\psi_i, \K_i,\u_i), (\psi_{i+1}, \K_{i+1},\u_{i+1}))$, $i=1,2$, the vertical composition
\[
\alpha_1\,\tilde\bullet\,\alpha_2\in
\mathrm{FMor}^1(\G,\H)((\psi_1,\K_1,\u_1),(\psi_3,\K_3,\u_3))
\]
is constructed as follow.

\begin{construction}\label{C vert-compose-full}
Set $\tilde\K_{12}:=\K_1\ctimes_\G\K_2$, $\tilde\K_{23}:=\K_2\ctimes_\G\K_3$, $\tilde\K_{13}:=\K_1\ctimes_\G\K_3$, $\tilde\K_{12,23}:=\K_{12}\ctimes_{\K_2} \K_{23}$. We have the following diagram
\begin{align}\label{E dig-D-vert-cpos-full}
\bfig
    \cube|blrb|/@{->}^<>(.6){\Psi}`->`->`>/
    <1500,900>[\tilde\K_{12,23}`\tilde\K_{13}`\tilde\K_{23}` \K_3; ```\hskip 1cm]
    (700,300)|alrb|<1400,800>[\tilde\K_{12}`\K_1`\K_2`\H;```\hskip 1cm]
    |lrrr|/->`->`->`->/[```]
    \place(1000,800)[\twoar(0,-1)]
    \place(1100,760)[\alpha_1]
    \place(1000,200)[\twoar(-12,-10)]
    \place(1110,170)[\alpha_2]
    \place(1650,700)[\twoar(0,-1)]
    \place(1900,670)[\alpha_1\,\tilde\bullet\,\alpha_2?]
    \efig
\end{align}
with unmarked strict morphisms being natural projections from strict fiber products to their factors. An arrow of $\tilde\K_{12,23}$, denoted by $(a,b,(k_1,k_2, k_2,k_3), b,c)$, can be illustrated in the following form
\begin{align}\label{E arrow-K1223-full}
\begin{split}
\xymatrix{
k_1\ar[d]^a  \ar@{.>}[r]^{1_{\psi^0_1(k_1)}}&
k_2\ar[d]^b  \ar[r]^{1_{k_2}}&
k_2\ar[d]^b  \ar@{.>}[r]^{1_{\psi^0_3(k_3)}} &
k_3\ar[d]^c  \\
\tilde k_1\ar@{.>}[r]^{1_{\psi^0_1(\tilde k_1)}}&
\tilde k_2\ar[r]^{1_{\tilde k_2}}&
\tilde k_2\ar@{.>}[r]^{1_{\psi_2^0(\tilde k_2)}} & \tilde k_3}
\end{split}\end{align}
with two rows being two objects in $\tilde K^0_{12,23}$ and all four columns combine into an arrow in $\tilde K^1_{12,23}$, where
\begin{itemize}
\item $k_i,\tilde k_i\in  K_i^0$ being objects of $\K_i$ for $i=1,2,3$,
\item $a\in  K_1^1,b\in  K_2^1,c\in  K_3^1$ being arrows of $\K_i$,  for $i=1,2,3$,
\item $\psi_1^1(a)=\psi_2^1(b)=\psi_3^1(c)$.
\end{itemize}
The strict morphism $\Psi:\tilde \K_{12,23} \rto \tilde \K_{13}$ is given by
\begin{align}\label{eq Psi-full}
\begin{split}
\Psi^0(k_1,k_2, k_2, k_3) &=  (k_1,k_3),\\
\Psi^1(a,b,(k_1,k_2, k_2,k_3), b,c)&=(a,c).
\end{split}
\end{align}
Then from the definition of $\Psi$ and natural projections we see that in the cube \eqref{E dig-D-vert-cpos-full} the three squares with vertices $\{\tilde \K_{12,23},\tilde \K_{12},\tilde \K_{13}, \K_1\}$, $\{\tilde \K_{12,23},\tilde \K_{23},\tilde \K_{13}, \K_3\}$ and $\{\tilde \K_{12,23},\tilde \K_{12},\tilde \K_{23}, \K_2\}$ are all commutative.

The composition $\alpha\,\tilde\bullet\,\beta:\tilde K_{13}^0\rto G^1$ is give by
\begin{align}\label{E alpha-odot-beta-full}
\alpha\,\tilde\bullet\,\beta(k_1,k_3)=\alpha(k_1,k_2)\cdot\beta(k_2,k_3)
\end{align}
for a $k_2\in  K_2^0$ satisfying $\psi_1^0(k_1)=\psi_2^0(k_2)=\psi_3^0(k_3)$. It is direct to verify that $\alpha\,\tilde\bullet\,\beta$ is a natural transformation $\alpha\,\tilde\bullet\,\beta:\u_1\circ\tilde\pi_1\Rto \u_3\circ\tilde \pi_3$, hence an arrow $\alpha\,\tilde\bullet\,\beta:\u_1\rto \u_3$.
\end{construction}

Similar as Lemma \ref{L vert-comp-asso}, Lemma \ref{L unit-arrow} and Lemma \ref{L inv-arrow} we have

\begin{lem}\label{L vert-comp-full-asso}
The vertical compositions
\[
\tilde\bullet\,:\mathrm{FMor^1}(\sf G,H)\times
            \mathrm{FMor^1}(G,H) \rto
            \mathrm{FMor^1}(G,H)
\]
is associative.
\end{lem}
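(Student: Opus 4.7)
The plan is to mirror the strategy used for Lemma \ref{L vert-comp-asso}, only adapted to strict fiber products instead of fiber products. Given three composable arrows $\alpha_i\in\mathrm{FMor}^1(\G,\H)((\psi_i,\K_i,\u_i),(\psi_{i+1},\K_{i+1},\u_{i+1}))$ for $i=1,2,3$, the first step is to evaluate both bracketings at an arbitrary object $(k_1,k_4)\in K_1^0\times_{G^0}K_4^0$ of $\K_1\ctimes_\G\K_4$ by iterating the defining formula \eqref{E alpha-odot-beta-full}. A single application requires only one intermediate choice (rather than the splitting $x=x_1\cdot x_2$ required in the proof of Lemma \ref{L vert-comp-asso}), because the identity $1_{\psi^0_i(k_i)}$ is already built into the strict fiber product.

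Concretely, I would pick $k_2\in K_2^0$ and $k_3\in K_3^0$ such that $\psi_1^0(k_1)=\psi_2^0(k_2)=\psi_3^0(k_3)=\psi_4^0(k_4)$ (these exist because $\psi_2$ and $\psi_3$ are full-equivalences, so $\psi_2^0$ and $\psi_3^0$ are surjective). Unfolding gives
\[
\bigl((\alpha_1\,\tilde\bullet\,\alpha_2)\,\tilde\bullet\,\alpha_3\bigr)(k_1,k_4)=\alpha_1(k_1,k_2)\cdot\alpha_2(k_2,k_3)\cdot\alpha_3(k_3,k_4),
\]
while a symmetric computation with intermediate choices $\tilde k_2,\tilde k_3$ yields
\[
\bigl(\alpha_1\,\tilde\bullet\,(\alpha_2\,\tilde\bullet\,\alpha_3)\bigr)(k_1,k_4)=\alpha_1(k_1,\tilde k_2)\cdot\alpha_2(\tilde k_2,\tilde k_3)\cdot\alpha_3(\tilde k_3,k_4).
\]
Since Construction \ref{C vert-compose-full} already records that $\tilde\bullet$ is independent of the intermediate choice, we may take $\tilde k_2=k_2$ and $\tilde k_3=k_3$, and the two expressions coincide.

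The main obstacle is really checking that tacit well-definedness claim, i.e.\ that $\alpha(k_1,k_2)\cdot\beta(k_2,k_3)$ does not depend on the particular preimage $k_2$ chosen. For a second choice $k_2'$ with $\psi_2^0(k_2')=\psi_2^0(k_2)$, fullness and faithfulness of $\psi_2$ (Remark \ref{R equiv-bij-arrows}) provide a unique $b\in K_2^1(k_2,k_2')$ with $\psi_2^1(b)=1_{\psi_2^0(k_2)}$. Then $(1_{k_1},b)$ and $(b,1_{k_3})$ are arrows in $\K_1\ctimes_\G\K_2$ and $\K_2\ctimes_\G\K_3$ respectively, and the naturality of $\alpha$ and $\beta$ gives $\alpha(k_1,k_2)\cdot\u_2^1(b)=\alpha(k_1,k_2')$ and $\u_2^1(b)\cdot\beta(k_2',k_3)=\beta(k_2,k_3)$. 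Combining these yields the asserted equality. Once this is in place, associativity follows by simply aligning the intermediate choices as above; the rest of the argument is purely formal.
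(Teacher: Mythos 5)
Your proof is correct and takes essentially the same route the paper intends: the paper omits the proof of this lemma entirely, deferring to the argument of Lemma \ref{L vert-comp-asso}, and your argument is exactly that argument transposed to strict fiber products (evaluate both bracketings at $(k_1,k_4)$ and align the intermediate choices, which is even simpler here since no splitting of an arrow $x$ is needed). Your naturality computation showing that $\alpha(k_1,k_2)\cdot\beta(k_2,k_3)$ is independent of the chosen preimage $k_2$ correctly supplies the detail that Construction \ref{C vert-compose-full} leaves as ``direct to verify''.
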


\begin{lem}\label{L unit-ful-arrow}
Given a full-morphism $\sf (\psi,K,u)\in\mathrm{FMor}^0(G,H)$, there is an arrow $\sf 1_{\sf (\psi,K,u)}$ serves as the unit arrow over $\sf (\psi,K,u)$ in $\mathrm{FMor}^1(\sf G,H)$ with respect to the vertical composition $\tilde\bullet$, which is given by
\[
{\sf 1}_{\sf (\psi,K,u)}: ({\sf K\ctimes_G K})^0\rto H^1, \qq
\msf 1_{\sf (\psi,K,u)}(k,k'):= u^1((\psi^1)\inv(1_{\psi^0(k)})).
\]
\end{lem}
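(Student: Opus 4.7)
The plan is to verify three things in turn: that the formula given for $\mathsf{1}_{(\psi,\K,\u)}$ makes sense and produces a natural transformation $\u\circ\tilde\pi_1 \Rightarrow \u\circ\tilde\pi_2$, and that this arrow is a two-sided unit for $\tilde\bullet$.

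First, I would check well-definedness. For any object $(k,k')\in(\K\ctimes_\G\K)^0$ we have $\psi^0(k)=\psi^0(k')$ by definition of the strict fiber product, so $1_{\psi^0(k)}\in G^1(\psi^0(k),\psi^0(k'))$. Since $\psi$ is an equivalence, by Remark \ref{R equiv-bij-arrows} the map $\psi^1:K^1(k,k')\to G^1(\psi^0(k),\psi^0(k'))$ is a bijection, so $(\psi^1)^{-1}(1_{\psi^0(k)})$ is a uniquely determined arrow in $K^1(k,k')$, and $u^1$ of this lies in $H^1(u^0(k),u^0(k'))$, which is where a natural transformation $\u\circ\tilde\pi_1\Rightarrow\u\circ\tilde\pi_2$ should take values.

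Next, I would check naturality. An arrow in $(\K\ctimes_\G\K)^1$ from $(k_1,k_1')$ to $(k_2,k_2')$ is a pair $(a,b)$ with $a\in K^1(k_1,k_2)$, $b\in K^1(k_1',k_2')$ and $\psi^1(a)=\psi^1(b)$. Writing $\eta_{k,k'}:=(\psi^1)^{-1}(1_{\psi^0(k)})$, the naturality square reduces, by functoriality of $u^1$, to the identity $a\cdot\eta_{k_2,k_2'} = \eta_{k_1,k_1'}\cdot b$ in $K^1$. Since $\psi$ is full and faithful (Remark \ref{R equiv-bij-arrows}), it suffices to verify this after applying $\psi^1$, where both sides equal $\psi^1(a)=\psi^1(b)$ because composition with identities is trivial. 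Thus $\mathsf{1}_{(\psi,\K,\u)}$ is a genuine arrow in $\mathrm{FMor}^1(\G,\H)$.

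Finally, for the unit property, take any arrow $\alpha:(\psi,\K,\u)\to(\psi',\K',\u')$ in $\mathrm{FMor}^1(\G,\H)$. By Construction \ref{C vert-compose-full},
\[
(\mathsf{1}_{(\psi,\K,\u)}\,\tilde\bullet\,\alpha)(k,k') \;=\; \mathsf{1}_{(\psi,\K,\u)}(k,\tilde k)\cdot\alpha(\tilde k,k')
\]
for any choice of $\tilde k\in K^0$ with $\psi^0(k)=\psi^0(\tilde k)=\psi'^0(k')$, and this is independent of the choice. Choosing $\tilde k=k$ gives $\eta_{k,k}=(\psi^1)^{-1}(1_{\psi^0(k)})=1_k$, whence $\mathsf{1}_{(\psi,\K,\u)}(k,k)=u^1(1_k)=1_{u^0(k)}$, and the composition reduces to $\alpha(k,k')$. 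The analogous choice $\tilde k'=k'$ handles $\alpha\,\tilde\bullet\,\mathsf{1}_{(\psi,\K,\u)}=\alpha$. No step is a serious obstacle; the one point that requires the full-equivalence hypothesis only implicitly (via fullness and faithfulness) is the inversion $(\psi^1)^{-1}$, which is exactly the role played by the equivalence hypothesis in Definition \ref{D morphism}.
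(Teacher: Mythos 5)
Your proof is correct and follows the only natural route: the paper itself states this lemma without proof (deferring to the analogy with Lemma \ref{L unit-arrow}, whose own proof consists of just the formula), and your verification of well-definedness, naturality via faithfulness of $\psi^1$, and the two-sided unit property via the choice $\tilde k=k$ (using $(\psi^1)^{-1}(1_{\psi^0(k)})=1_k$) is exactly the computation the authors leave to the reader.
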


\begin{lem}\label{L inv-full-arrow}
Given an arrow $\alpha\in\mathrm{FMor}^1(\G,\H)((\psi_1,\K_1,\u_1), (\psi_2,\K_2,\u_2))$ there is a natural induced arrow $\alpha\inv\in\mathrm{FMor}^1(\G,\H) ((\psi_2,\K_2,\u_2),(\psi_1,\K_1,\u_1))$, which is given by
\[
\alpha\inv:({\K_2\ctimes_\G \K_1})^0\rto H^1,\qq
\alpha\inv(k_2,k_1):=\alpha(k_1,k_2)\inv.
\]
It satisfies
\[
\alpha\,\tilde\bullet\,\alpha\inv=\msf 1_{(\psi_1,\K_1,\u_1)}, \qq\mbox{and}\qq
\alpha\inv\,\tilde\bullet\,\alpha=\msf 1_{(\psi_2,\K_2,\u_2)}.
\]
\end{lem}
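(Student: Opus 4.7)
The plan is to verify, in this order, three things: (i) that the prescribed formula $\alpha\inv(k_2,k_1):=\alpha(k_1,k_2)\inv$ makes sense, i.e.\ it produces an arrow in $H^1$ with the correct source and target for a natural transformation $\u_2\circ\tilde\pi_1\Rto\u_1\circ\tilde\pi_2$ on $\K_2\ctimes_\G\K_1$; (ii) that it is natural; and (iii) that the two composition identities hold. For (i), any object $(k_2,k_1)\in(\K_2\ctimes_\G\K_1)^0$ satisfies $\psi_2^0(k_2)=\psi_1^0(k_1)$, so $(k_1,k_2)$ is an object of $\K_1\ctimes_\G\K_2$ and $\alpha(k_1,k_2)$ is an arrow $\u_1^0(k_1)\to\u_2^0(k_2)$ in $H^1$; its inverse then goes from $\u_2^0(k_2)=(\u_2\circ\tilde\pi_1)^0(k_2,k_1)$ to $\u_1^0(k_1)=(\u_1\circ\tilde\pi_2)^0(k_2,k_1)$, as required.

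For (ii), given an arrow $(b,a):(k_2,k_1)\to(k_2',k_1')$ in $(\K_2\ctimes_\G\K_1)^1$, we have $\psi_2^1(b)=\psi_1^1(a)$, hence $(a,b)$ is an arrow $(k_1,k_2)\to(k_1',k_2')$ in $(\K_1\ctimes_\G\K_2)^1$. Naturality of $\alpha$ on $(a,b)$ reads $\u_1^1(a)\cdot\alpha(k_1',k_2')=\alpha(k_1,k_2)\cdot\u_2^1(b)$; inverting this equation and rearranging gives $\u_2^1(b)\cdot\alpha(k_1',k_2')\inv=\alpha(k_1,k_2)\inv\cdot\u_1^1(a)$, which is precisely the naturality square for $\alpha\inv$.

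For (iii), I use the explicit formula \eqref{E alpha-odot-beta-full}. Given $(k_1,k_1')\in(\K_1\ctimes_\G\K_1)^0$, so $\psi_1^0(k_1)=\psi_1^0(k_1')$, I pick any $k_2\in K_2^0$ with $\psi_2^0(k_2)=\psi_1^0(k_1)$; such a $k_2$ exists because $\psi_2$ is a full-equivalence, hence $\psi_2^0$ is surjective. Then $(\alpha\,\tilde\bullet\,\alpha\inv)(k_1,k_1')=\alpha(k_1,k_2)\cdot\alpha(k_1',k_2)\inv$. Since $\psi_1$ is fully faithful and $\psi_1^0(k_1)=\psi_1^0(k_1')$, there is a unique $a\in K_1^1(k_1,k_1')$ with $\psi_1^1(a)=1_{\psi_1^0(k_1)}$. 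The pair $(a,1_{k_2})$ is then an arrow $(k_1,k_2)\to(k_1',k_2)$ in $(\K_1\ctimes_\G\K_2)^1$, and applying naturality of $\alpha$ to it yields $\u_1^1(a)\cdot\alpha(k_1',k_2)=\alpha(k_1,k_2)\cdot\u_2^1(1_{k_2})=\alpha(k_1,k_2)$. Therefore $\alpha(k_1,k_2)\cdot\alpha(k_1',k_2)\inv=\u_1^1(a)=\u_1^1((\psi_1^1)\inv(1_{\psi_1^0(k_1)}))$, which equals $\msf 1_{(\psi_1,\K_1,\u_1)}(k_1,k_1')$ by Lemma~\ref{L unit-ful-arrow}. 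The identity $\alpha\inv\,\tilde\bullet\,\alpha=\msf 1_{(\psi_2,\K_2,\u_2)}$ is established symmetrically, swapping the roles of $\K_1,\psi_1$ with $\K_2,\psi_2$ and using the arrow $(1_{k_1},a')$ for the appropriate $a'\in K_2^1$ with $\psi_2^1(a')=1_{\psi_2^0(k_2)}$.

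The arguments are genuinely routine naturality computations; the only thing one must be careful about is keeping the projections $\tilde\pi_1,\tilde\pi_2$ and the roles of $(k_1,k_2)$ versus $(k_2,k_1)$ straight when passing between $\K_1\ctimes_\G\K_2$ and $\K_2\ctimes_\G\K_1$. The main obstacle, such as it is, lies in independence of the chosen $k_2$ in step (iii); but this follows by a second application of the naturality of $\alpha$ exactly as in the proof of Lemma~\ref{L vert-comp-asso}, applied to two different choices $k_2,\tilde k_2$ connected via the unique $b\in K_2^1(k_2,\tilde k_2)$ with $\psi_2^1(b)=1_{\psi_1^0(k_1)}$ coming from fullness and faithfulness of $\psi_2$.
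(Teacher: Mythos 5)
Your proof is correct, and it supplies exactly the verification the paper omits: the paper states Lemma~\ref{L inv-full-arrow} without proof, remarking only that it is ``similar'' to Lemma~\ref{L inv-arrow}, whose proof just writes down the formula for $\alpha\inv$. Your well-definedness and naturality checks, and the use of the unique lifts $(\psi_1^1)\inv(1_{\psi_1^0(k_1)})$ and $(\psi_2^1)\inv(1_{\psi_2^0(k_2)})$ together with naturality of $\alpha$ at $(a,1_{k_2})$ and $(1_{k_1},a')$ to identify the composites with the unit arrows of Lemma~\ref{L unit-ful-arrow}, are precisely the intended argument.
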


Therefore

\begin{theorem}\label{T FMor-groupoid}
For each pair $(\G,\H)$ of groupoids, $\sf FMor(G,H)=(\mathrm{FMor}^1(G,H)\rrto \mathrm{FMor^0}(G,H))$ is a groupoid.
\end{theorem}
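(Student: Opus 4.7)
The plan is to verify the axioms of Definition \ref{D Groupoid} for
\[
{\sf FMor(G,H)}=(\mathrm{FMor}^1(\G,\H)\rrto \mathrm{FMor}^0(\G,\H))
\]
by assembling the three preceding lemmas. Source and target maps on $\mathrm{FMor}^1(\G,\H)$ are built into Definition \ref{D ful-pre-arrow}; composition is $\tilde\bullet$ from Construction \ref{C vert-compose-full}; units are supplied by Lemma \ref{L unit-ful-arrow}; inverses are supplied by Lemma \ref{L inv-full-arrow}. There is nothing to prove beyond checking associativity, the unit identities, and the inverse identities for this data.

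Associativity of $\tilde\bullet$ is exactly Lemma \ref{L vert-comp-full-asso}. For the unit axiom, I would unpack \eqref{E alpha-odot-beta-full}: given $\alpha \in \mathrm{FMor}^1(\G,\H)((\psi_1,\K_1,\u_1),(\psi_2,\K_2,\u_2))$, evaluating at $(k_1,k_2)$ and choosing the intermediate object to be $k_1$ itself yields
\[
(1_{(\psi_1,\K_1,\u_1)}\,\tilde\bullet\,\alpha)(k_1,k_2) = \u_1^1\bigl((\psi_1^1)\inv(1_{\psi_1^0(k_1)})\bigr) \cdot \alpha(k_1,k_2).
\]
Faithfulness of $\psi_1$ forces $(\psi_1^1)\inv(1_{\psi_1^0(k_1)}) = 1_{k_1}$, so the first factor is $1_{\u_1^0(k_1)}$ and the composite reduces to $\alpha(k_1,k_2)$, giving $1_{(\psi_1,\K_1,\u_1)}\,\tilde\bullet\,\alpha = \alpha$; the right-unit identity $\alpha\,\tilde\bullet\,1_{(\psi_2,\K_2,\u_2)} = \alpha$ follows symmetrically. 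The inverse axiom is then the content of Lemma \ref{L inv-full-arrow}.

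The main obstacle is not this assembly but the well-definedness of $\tilde\bullet$ already packaged into Construction \ref{C vert-compose-full}: existence of a mediating object $k_2 \in K_2^0$ over any common base point in $G^0$ uses surjectivity of $\psi_2^0$, i.e.\ the full-equivalence hypothesis on $\psi_2$, while independence of the choice of $k_2$ uses naturality of $\alpha_1$ and $\alpha_2$ together with the bijection \eqref{E equiv-bij-arrows} coming from full faithfulness of $\psi_2$. I expect that step to be completely parallel to the verification already carried out for $\bullet$ in Construction \ref{C vert-compose}, and in fact simpler because in the strict setting the connecting arrow in $G^1$ is forced to be an identity and the splitting $x = x_1\cdot x_2$ of \eqref{E alpha-odot-beta} collapses to the trivial one. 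Once Construction \ref{C vert-compose-full} is granted, Lemmas \ref{L vert-comp-full-asso}, \ref{L unit-ful-arrow}, and \ref{L inv-full-arrow} combine to yield the theorem.
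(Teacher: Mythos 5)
Your proposal is correct and follows exactly the paper's route: the proof of Theorem \ref{T FMor-groupoid} there is simply the assembly of Lemma \ref{L vert-comp-full-asso} (associativity of $\tilde\bullet$), Lemma \ref{L unit-ful-arrow} (units), and Lemma \ref{L inv-full-arrow} (inverses), with the well-definedness of $\tilde\bullet$ carried by Construction \ref{C vert-compose-full} just as you describe. Your explicit unit-axiom computation and the remark that the strict setting collapses the splitting of \eqref{E alpha-odot-beta} are accurate elaborations of details the paper leaves implicit.
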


\subsection{Equivalence between $\sf Mor(G,H)$ and $\sf FMor(G,H)$}
\label{Subs equi-of-mor-groupoid}

We have the following equivalence of morphism groupoids.

\begin{theorem}\label{T equi-i}
There is a natural strict morphisms $\i=(i^0,i^1):\sf FMor(G,H)\rto Mor(G,H)$. Moreover it is an equivalence between groupoids.
\end{theorem}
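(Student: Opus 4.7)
The plan is to define $\i = (i^0, i^1)$ explicitly and then establish the equivalence via essential surjectivity and full faithfulness, both powered by the observation (Lemmas \ref{L Geofib-as-subgrpd} and \ref{L Go*/HK-eq-G*/HK}) that $\K_1\ctimes_\G\K_2$ sits inside $\K_1\times_\G\K_2$ as an equivalent subgroupoid whenever both $\psi_1,\psi_2$ are full-equivalences.

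First I would set $i^0$ to be the set-theoretic inclusion $\mathrm{FMor}^0(\G,\H)\hookrightarrow \mathrm{Mor}^0(\G,\H)$. For $i^1$, given an arrow $\alpha:(\K_1\ctimes_\G\K_2)^0\to H^1$ between two full-morphisms, I would define $i^1(\alpha):(\K_1\times_\G\K_2)^0\to H^1$ by
\[
i^1(\alpha)(k_1,x,k_2):=\u_1^1\bigl((\psi_1^1)\inv(x_1)\bigr)\cdot \alpha(k_1',k_2),
\]
where $k_1'\in K_1^0$ is any preimage of $\psi_2^0(k_2)$ under $\psi_1^0$ (which exists because $\psi_1$ is a full-equivalence) and $(\psi_1^1)\inv(x):k_1\to k_1'$ is the unique lift guaranteed by Remark \ref{R equiv-bij-arrows}. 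Independence of the choice of $k_1'$ would be obtained by invoking naturality of $\alpha$ on the unique lift in $K_1^1$ of an identity arrow in $G^1$ between two such preimages. I would then check that $i^1(\alpha)$ is a natural transformation $\u_1\circ\pi_1\Rto\u_2\circ\pi_2$ by a direct diagram chase using the definition of arrows in $\K_1\times_\G\K_2$ together with the naturality of $\alpha$.

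Next I would verify that $\i$ is a strict morphism of groupoids. Comparing the formulas \eqref{E def-1u} and the one in Lemma \ref{L unit-ful-arrow} shows immediately that $i^1$ sends unit arrows to unit arrows. For compatibility with composition, $i^1(\alpha_1\,\tilde\bullet\,\alpha_2)=i^1(\alpha_1)\bullet i^1(\alpha_2)$ can be verified on a generic $(k_1,x,k_3)$ by choosing the splitting in Construction \ref{C vert-compose} as $x=x\cdot 1_{\psi_2^0(k_2)}$ for a suitable $k_2$ produced from the full-equivalence of $\psi_2$, whereupon both sides reduce to the product of the two $i^1(\alpha_i)$-values computed from the defining formula above.

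To conclude that $\i$ is an equivalence I would check the two conditions of Definition \ref{D equi&Moritaequi}. For essential surjectivity, given $(\psi,\K,u)\in\mathrm{Mor}^0(\G,\H)$, I form $\tilde \K:=\G\times_\G\K$ with first leg $\msf{id}_\G$ and second leg $\psi$; then $\pi_1:\tilde\K\to\G$ is an equivalence by Lemma \ref{L G*/HK-equiv G}, and the essential surjectivity of $\psi$ makes $\pi_1^0$ surjective, so $\pi_1$ is a full-equivalence. Hence $(\pi_1,\tilde\K,u\circ\pi_2)$ is a full-morphism, and an explicit arrow in $\mathrm{Mor}^1(\G,\H)$ connecting it to $(\psi,\K,u)$ can be written down using the tautological natural transformation between $\psi\circ\pi_2$ and $\pi_1$ on $\tilde\K$. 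For full faithfulness, given any $\beta\in\mathrm{Mor}^1(\G,\H)(i^0(\psi_1,\K_1,\u_1),i^0(\psi_2,\K_2,\u_2))$, I set $\alpha:=\beta\circ q^0$, where $\q$ is the embedding from \eqref{E q-geo-fbi-to-fib}; then $i^1(\alpha)=\beta$ by the rigidity of natural transformations along the equivalence $(\K_1\times_\G\K_2)|_{U^0}\hookrightarrow \K_1\times_\G\K_2$ from Lemma \ref{L Go*/HK-eq-G*/HK}, and uniqueness of $\alpha$ follows because $\q^0$ is injective. The main technical obstacle I anticipate is the well-definedness and composition-compatibility of $i^1$, because the formulas in $\sf Mor$ involve an arbitrary splitting of an arrow in $\G$ whereas those in $\sf FMor$ do not; in each case the discrepancy is absorbed by applying naturality to the appropriate lifted arrow in $K_i^1$.
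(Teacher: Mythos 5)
Your proposal is correct and follows essentially the same route as the paper: $i^0$ is the inclusion, $i^1$ extends a natural transformation from $\K_1\ctimes_\G\K_2$ to $\K_1\times_\G\K_2$ along the equivalence $\q$ of Lemma \ref{L Go*/HK-eq-G*/HK}, essential surjectivity is obtained from the full-morphism $(\pi_1,\G\times_\G\K,\u\circ\pi_2)$, and fullness/faithfulness from the inverse $\alpha\mapsto\alpha\circ q^0$. Your explicit formula for $i^1(\alpha)$ (transporting in the first factor) is just a particular instance of the paper's general extension formula \eqref{E extend-alpha-S5}, so the two arguments coincide up to the choice of connecting arrow, which both proofs show is immaterial.
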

\begin{proof}
We first construct the $\sf i$. The $i^0:\sf \mathrm{FMor}^0(G,H)\rto  \mathrm{Mor}^0(G,H)$ is the inclusion.

We next define $i^1$. Take an arrow $\alpha\in\mathrm{FMor^1}(\G,\H)(\sf(\psi,K,u),(\phi,L,v))$, then $\alpha$ is a natural transformation
\[
\u\circ\tilde \pi_1\stackrel{\alpha}{\Rto} \v\circ \tilde \pi_2
:\sf K\ctimes_GL\rto H.
\]
By Lemma \ref{L Go*/HK-eq-G*/HK}, $\sf q:K\ctimes_GL\rto K\times_GL$, is an injective equivalence. This $\sf q$ together with the equalities $\u\circ \tilde \pi_1=\u\circ  \pi_1\circ \q$, $\v\circ \tilde \pi_2=\v\circ  \pi_2\circ \q$, gives rise to a canonically induced natural transformation
\[
\u\circ  \pi_1\stackrel{\tilde\alpha}{\Rto} \v\circ  \pi_2:
\sf K\times_{\psi,G,\phi}L \rto H
\]
described as follow. Since $\sf q$ is an equivalence,  for any object $b\in \sf (K\times_GL)^0$, there is an object $a\in(\sf K\ctimes_GL)^0$ and an arrow $x:q^0(a)\rto b$ in $(\sf K\times_GL)^1$. Then we set
\begin{align}\label{E extend-alpha-S5}
\tilde\alpha(b):=
[(\msf u\circ\pi_1)^1(x)]\inv\cdot\alpha(a)\cdot
(\msf v\circ\pi_2)^1(x).
\end{align}
By using the fact that $\alpha$ is a natural transformation, it is direct to verify that this definition of $\tilde\alpha$ does not depend on the choices of $a$ and $x$ and is a natural transformation. Then we set $i^1(\alpha)=\tilde \alpha$.

We next show that $i=(i^0,i^1)$ is a strict morphism and an equivalence. By the construction above $i^0{\sf(\psi,K,u)}\xrightarrow{i^1(\alpha)} i^0\sf(\phi,L,v)$. We next show that it also preserves the vertical composition. For two arrows $\sf (\psi,K,u)\xrightarrow{\alpha} (\phi,L,v)$, $\sf (\psi,L,v)\xrightarrow{\beta} (\phi,M,w)$ in $\mathrm{FMor}^1(\sf G,H)$ we have $\alpha\,\tilde\bullet\,\beta:\sf (K\ctimes_GM)^0\rto H^1$, with
\[
\alpha\,\tilde\bullet\,\beta(k,m)=\alpha(k,l)\cdot\beta(l,m)
\]
for some $l\in L^0$ satisfying $\phi^0(l)=\psi^0(k)=\varphi^0(m)$. We next show that $i^1(\alpha\,\tilde\bullet\,\beta)=i^1(\alpha)\bullet i^1(\beta): ({\sf K\times_G M})^0\rto H^1$.

Objects in $(\sf K\times_GM)^0$ is of the form $(k,x,m)$ with $x:\psi^0(k)\rto \varphi^0(m)$ in $ G^1$. Take an arrow in $({\sf K\times_GM})^1$
\[
(1_k,(\varphi^1)\inv(x)): q^0(k,m')\rto (k,x,m),
\]
where $m'$ satisfies $\phi^0(m')=\psi^0(k)$ (See similar construction in the proof of Lemma \ref{L Go*/HK-eq-G*/HK}). Then
\begin{align*}
i^1(\alpha\,\tilde{\bullet}\,\beta)(k,x,m)
&=[(\u\circ\pi_1)^1(1_k,(\varphi^1)\inv(x))]\inv
\cdot\alpha\,\tilde{\bullet}\,\beta(k,m')\cdot
( \w\circ\pi_2)^1(1_k,(\varphi^1)\inv(x))\\
&=u^1(1_k)\cdot
\alpha\,\tilde{\bullet}\,\beta(k,m')\cdot
w^1((\varphi^1)\inv(x))\\
&=\alpha(k,l')\cdot\beta(l',m')\cdot
w^1((\varphi^1)\inv(x))
\end{align*}
for some $l'\in L^0$ such that $\psi^0(k)=\phi^0(l')=\varphi^0(m')$.

On the other hand, for this $l'$, we have $(k,1_{\psi^0(k)},l')\in \mbox{Im}\, q^0$ and $(1_{l'},(\varphi^1)\inv(x)):(l',1_{\psi^0(k)},m')\rto (l',x,m)$ is also an arrow in ${\sf (L\times_GM)}^1$. Therefore
\begin{align*}
& i^1(\alpha)\bullet i^1(\beta) (k,x,m)\\
&= i^1(\alpha)(k,1_{\psi^0(k)},l')\cdot i^1(\beta)(l',x,m)\\
&= i^1(\alpha)(k,1_{\psi^0(k)},l')\cdot
[(\v\circ\pi_1)^1(1_{l'},(\varphi^1)\inv(x))]\inv
\cdot\beta(l',m')\cdot
(\msf w\circ\pi_2)^1(1_{l'},(\varphi^1)\inv(x))\\
&=\alpha(k,l')\cdot v^1(1_{l'})
\cdot\beta(l',m')\cdot
w^1((\varphi^1)\inv(x))\\
&=\alpha(k,l')\cdot\beta(l',m')\cdot
w^1((\varphi^1)\inv(x)).
\end{align*}
Hence $i^1(\alpha)\bullet i^1(\beta)=i^1(\alpha\,\tilde{\bullet}\,\beta)$. Consequently $\i=(i^0,i^1):\sf FMor(G,H)\rto Mor(G,H)$ is a strict morphism.

We next show that $\sf i$ is an equivalence. First of all, for every morphism $\sf(\psi, K, u)\in\mathrm{Mor^0(G,H)}$ there is another morphism $({\sf id_G}\circ\pi_1,{\sf G\times_GK,u}\circ\pi_2) \in\mathrm{Mor^0(G,H)}$. Obviously $({\sf id_G\circ}\pi_1,{\sf G\times_GK,u}\circ\pi_2)$ is a full-morphism, hence belongs to $\text{Im}\, i^0$. We claim that there is an arrow $\alpha\in \mathrm{Mor}^1(\G,\H)({\sf (\psi, K, u)}, ({\sf id_G}\circ\pi_1,{\sf G\times_GK,u}\circ\pi_2))$. Now we construct the $\alpha$. By definition $\alpha$ is a natural transformation $\msf u\circ\pi_1\stackrel{\alpha}{\Rto} \u\circ\pi_2\circ \pi_2: \sf K\times_G(G\times_GK)\rto H$. Set $\Q=(Q^1\rrto Q^0):=\sf K\times_G(G\times_GK)$. An object in $Q^0$ is of the form
\[
\xymatrix{k\ar@{.>}[r]^x_{\sf G} & (g \ar@{.>}[r]^{y}_{\sf G} & k')}
\]
denoted by $(k,x,g,y,k')$. It is mapped by $\u\circ\pi_1$ and $\u\circ\pi_2\circ \pi_2$ respectively to $u^0(k)$, $u^0(k')$. From $(k,x,g,y,k')$ we get an arrow $x\cdot y:\psi^0(k)\rto \psi^0(k')$ in $G^1$. Then since $\psi$ is an equivalence we get a unique arrow $(\psi^1)\inv(x\cdot y):k\rto k'$ in $K^1$. We define
\[
\alpha(k,x,g,y,k'):=u^1((\psi^1)\inv(x\cdot y))=
u^1\circ (\psi^1)\inv(x)\cdot u^1\circ (\psi^1)\inv(y).
\]
Then it is direct to check that this is the arrow we want. Hence $i^0$ is essentially surjective.

Finally we show that $\sf i$ is full and faithful. In fact the inverse map $(i^1)\inv$ of
\[
i^1:\mathrm{FMor}^1(\G,\H)((\psi_1,\K_1,\u_1),(\psi_2,\K_2,\u_2)) \rto \mathrm{Mor^1}(\G,\H)(i^0(\psi_1,\K_1,\u_1),i^0( \psi_2,\K_2,\u_2))
\]
is given by
\[
(i^1)\inv(\alpha):=\alpha\circ q^0:
({\K_1\ctimes_\G \K_2})^0\rto({\K_1\times_\G \K_2})^0 \rto H^1.
\]
Hence $\sf i$ is an equivalence.
\end{proof}

This $\i:\sf FMor(\G,\H)\rto Mor(\G,\H)$ factors through
\[
\i: {\sf FMor(G,H)}\rto {\sf Mor(G,H)}|_{\mathrm{FMor^0}(\sf G,H)}
\hrto \sf Mor(G,H).
\]
Hence all three groupoids are equivalent.

\begin{rem}\label{R bullet-to-tilde-bullet}
We can give another definition of $\tilde\bullet$ via the construction of $i^1$ and definition of $\bullet$. Suppose we have two arrows in $\alpha,\beta \in \mathrm{FMor}^1(\G,\H)(\sf (\psi,K,u),(\phi,L,v))$, then via $i^1$ we get two arrows
\[
i^1(\alpha),i^1(\beta) \in \mathrm{Mor}^1(\G,\H)(\sf (\psi,K,u),(\phi,L,v)).
\]
Then we have $i^1(\alpha)\bullet i^1(\beta)$, and
\[
\alpha\,\tilde\bullet\,\beta=(i^1)\inv(i^1(\alpha)\bullet i^1(\beta))
=(i^1(\alpha)\bullet i^1(\beta))\circ q^0,
\]
with $q^0:\tilde K_{13,23}^0\rto K_{13,23}^0$.

In fact, the injective strict morphism $\q$ in \eqref{E q-geo-fbi-to-fib} from strict fiber product to fiber product together with identity strict morphisms of $\K_1,\K_2,\K_3,H$ gives us a strict morphisms from the cube \eqref{E dig-D-vert-cpos-full} to the cube \eqref{E dig-D-vert-cpos}, and $\Psi$ is the composition of $\Phi$ and $q^0:\tilde
K_{13,23}^0\rto K_{13,23}^0$.
\end{rem}

\section{Composition of morphism groupoids}
\label{S Composition-Mor-FMor}

In this section we show that there are natural composition functors on morphism groupoids:
\begin{align*}
&\circ:\sf Mor(G,H)\times Mor(H,N)\rto  Mor(G,N),\\
\mbox{and}\qq&\tcirc:\sf FMor(G,H)\times FMor(H,N)\rto  FMor(G,N).
\end{align*}

\subsection{Composition functor ``$\circ$''}

Given two morphisms $\sf (\psi, K,u): G\rhpu H \mbox{ and }
\sf (\phi, L, v): H\rhpu N$, let $\sf M:=K\times_HL$ be the fiber product of $\sf u:K\rto H$ and $\phi:\sf L\rto H$, let $\pi_1:\sf M\rto K$ and $\pi_2: \sf M\rto L$ be the corresponding projections. By Lemma \ref{L G*/HK-equiv G}, $\pi_1$ is an equivalence. Hence by Lemma \ref{L eqs-compose-eq}, $\psi\circ\pi_1:\sf M\rto G$ is also an equivalence.

\begin{defn}\label{D copose-morphism}
The {\em composition} of $\sf (\psi, K,u)$ and $\sf (\phi, L, v)$ is defined to be
\[
{\sf (\phi, L, v)\circ (\psi, K,u)}
:=(\psi\circ \pi_1, \M, \v\circ \pi_2): \sf G\rhpu N.
\]
This can be summarized in the following diagram
\[
\xymatrix{&& \M\ar[dl]_{\pi_1}\ar[dr]^{\pi_2}&&\\\sf G &\sf K \ar[l]_\psi\ar[r]^{\sf u} & \sf H&\sf L\ar[l]_\phi\ar[r]^{\sf v} & \sf N.}
\]
\end{defn}

For a groupoid $\sf G$, we call \[\sf 1_G:=(id_G,G,id_G):\xymatrix{\G &\G\ar[l]_-{\sf id_G} \ar[r]^-{\sf id_G}& \G}\] the {\em identity morphism} of $\sf G$. We also denote it by $\sf 1_G$. However it is not the unit for composition of morphisms since $\sf G\times_G H\not= H$.

Now we describe the horizontal composition of arrows. Take two arrows
\[
(\psi_1,\K_1,\u_1)\xrightarrow{\alpha}(\psi_2,\K_2,\u_2):\G\rhpu \H,\qq
(\phi_1,\J_1,\v_1)\xrightarrow{\beta}(\phi_2,\J_2,\v_2):\sf H\rhpu N.
\]
The horizontal composition $\beta\circ \alpha$ of $\alpha$ and $\beta$ is an arrow
\[
(\phi_1,\J_1,\v_1)\circ(\psi_1,\K_1,\u_1)
\xrightarrow{\beta\circ \alpha} (\phi_2,\J_2,\v_2)\circ (\psi_2,\K_2,\u_2):\G\rhpu \N.
\]
We describe the construction.

\begin{construction}\label{C hori-compose}
Set $\K_{12}:=\K_1\times_\G \K_2$, $\L:=\K_1\times_\H\J_1$, $\J_{12}:=\J_1\times_\H\J_2$, $\M:=\K_2\times_\H\J_2$, and $\U:=\sf L\times_G M$. We have the following diagram
\begin{align}\label{E dig-hori-cpose}\begin{split}
\begin{tikzpicture}
\def \x{2.5}
\def \y{0.8}
\node (A00)   at (0,0)        {$\sf G$};
~
\node (A11)   at (\x,\y)      {$\K_1$};
\node (A10)   at (\x,0)       {$\K_{12}$};
\node (A1-1)  at (\x,-1*\y)   {$\K_2$};
~
\node (A21)   at (2*\x,\y)    {$\sf L$};
\node (A20)   at (2*\x,0)     {$\sf H$};
\node (A2-1)  at (2*\x,-1*\y) {$\sf M$};
~
\node (A31)   at (3*\x,\y)    {$\J_1$};
\node (A30)   at (3*\x,0)     {$\J_{12}$};
\node (A3-1)  at (3*\x,-1*\y) {$\J_2$};
~
\node (A40)   at (4*\x,0)     {$\sf N$};
~
\node at (1.5*\x,0) {$\Downarrow\alpha$};
\node at (3.5*\x,0) {$\Downarrow\beta$};
~
\node (A-10) at (-0.3*\x,0) {$\sf U$};
~
\path (A00) edge [
<-]  node [auto] {$\scriptstyle{\psi_1}$} (A11);
\path (A00) edge [
<-]  node [auto,swap] {$\scriptstyle{\psi_2}$} (A1-1);
~
\path (A10) edge [->]  node [auto] {$\scriptstyle{\pi_1}$} (A11);
\path (A10) edge [->]  node [auto,swap] {$\scriptstyle{\pi_2}$} (A1-1);
~
\path (A11) edge [->]  node [auto] {$\scriptstyle{\u_1}$} (A20);
\path (A1-1) edge [->]  node [auto,swap] {$\scriptstyle{\u_2}$} (A20);
~
\path (A11) edge [
<-]  node [auto] {$\scriptstyle{\pi_1}$} (A21);
\path (A1-1) edge [
<-]  node [auto,swap] {$\scriptstyle{\pi_1}$} (A2-1);
~
\path (A21) edge [
->]  node [auto] {$\scriptstyle{\pi_2}$} (A31);
\path (A2-1) edge [
->]  node [auto,swap] {$\scriptstyle{\pi_2}$} (A3-1);
~
\path (A20) edge [<-]  node [auto] {$\scriptstyle{\phi_1}$} (A31);
\path (A20) edge [<-]  node [auto,swap] {$\scriptstyle{\phi_2}$} (A3-1);
~
\path (A30) edge [->]  node [auto] {$\scriptstyle{\pi_1}$} (A31);
\path (A30) edge [->]  node [auto,swap] {$\scriptstyle{\pi_2}$} (A3-1);
~
\path (A40) edge [
<-]  node [auto,swap] {$\scriptstyle{\v_1}$} (A31);
\path (A40) edge [
<-]  node [auto] {$\scriptstyle{\v_2}$} (A3-1);
~
\path (A-10) edge [
->,bend left=25]node[auto]{$\scriptstyle{\pi_1}$} (A21);
\path (A-10) edge [
->,bend right=25]node[auto,swap] {$\scriptstyle{\pi_2}$} (A2-1);
~
\path (A21) edge [
->,bend left=30]
                node[auto]{$\scriptstyle{\v_1\circ\pi_2}$} (A40);
\path (A2-1) edge [
->,bend right=30]
                node[auto,swap] {$\scriptstyle{\v_2\circ\pi_2}$} (A40);
\end{tikzpicture}\end{split}
\end{align}
The arrow $\beta\circ \alpha$ we want is a natural transformation $\beta\circ \alpha:\v_1\circ\pi_2\circ\pi_1\Rto \v_2\circ\pi_2\circ\pi_2:\sf U\rto N$. An objects in $U^0$ is of the form
\begin{align*}
\begin{split}
\xymatrix{
j_1 &
k_1 \ar@{.>}[l]_x \ar@{.>}[l]^{\sf H}
    \ar@{.>}[r]^z \ar@{.>}[r]_{\sf G}
    &
k_2 \ar@{.>}[r]^y \ar@{.>}[r]_{\sf H}
    &
j_2, }
\end{split}
\end{align*}
denoted by $(j_1,x,k_1,z,k_2,y,j_2)$. We define the horizontal composition by
\[
\beta\circ \alpha(j_1,x,k_1,z,k_2,y,j_2):= \beta(j_1,x\inv\cdot\alpha(k_1,z,k_2)\cdot y,j_2).
\]
It is direct to verify that this is an arrow $\beta\circ \alpha:(\phi_1,\J_1,\v_1)\circ(\psi_1,\K_1,\u_1) \rto (\phi_2,\J_2,\v_2)\circ (\psi_2,\K_2,\u_2)$.
\end{construction}

\begin{lem}\label{L hori-functor}
Combine with composition of morphisms we get a horizontal composition functor
\[
\circ:\sf Mor(H,N)\times Mor(G,H)\rto Mor(G,N).
\]
So ``$\circ$'' is a strict morphism of groupoids.
\end{lem}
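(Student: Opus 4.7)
The plan is to unpack the word ``functor'' into two verifications: (i) that the horizontal composition $\beta \circ \alpha$ described in Construction \ref{C hori-compose} is genuinely an arrow of $\sf Mor(G,N)$ from $(\phi_1,\J_1,\v_1)\circ(\psi_1,\K_1,\u_1)$ to $(\phi_2,\J_2,\v_2)\circ(\psi_2,\K_2,\u_2)$, and (ii) that $\circ$ preserves identities and vertical composition $\bullet$, i.e.\ the interchange law holds. Step (i) is essentially the content of Construction \ref{C hori-compose}; it requires checking that the formula $\beta\circ\alpha(j_1,x,k_1,z,k_2,y,j_2) = \beta(j_1,\,x\inv\cdot\alpha(k_1,z,k_2)\cdot y,\,j_2)$ is natural in the arrows of $\sf U = L\times_G M$. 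I would do this by taking an arrow of $\sf U$, expanding it through the defining commutative squares of the fiber products, and invoking the naturality of $\alpha$ in $\sf K_1\times_G K_2$ and of $\beta$ in $\sf J_1\times_H J_2$ in turn.

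For the identity axiom, I would take the unit arrow $\msf 1_{(\psi,\K,\u)}(k,x,k')=u^1((\psi^1)\inv(x))$ from Lemma \ref{L unit-arrow}, and analogously $\msf 1_{(\phi,\L,\v)}$. Substituting into the horizontal composition formula for an object $(j_1,x,k_1,z,k_2,y,j_2)$ and using that $u^1\circ(\psi^1)\inv$ and $v^1\circ(\phi^1)\inv$ are functorial (Remark \ref{R equiv-bij-arrows}(2)), one obtains $v^1((\phi^1)\inv(x\inv\cdot u^1((\psi^1)\inv(z))\cdot y))$. On the other hand, $(\phi,\L,\v)\circ(\psi,\K,\u)=(\psi\circ\pi_1, \K\times_\H\L, \v\circ\pi_2)$, and a direct expansion of $\msf 1_{(\psi\circ\pi_1,\K\times_\H\L,\v\circ\pi_2)}$ on the object $(j_1,x,k_1,z,k_2,y,j_2)$ (viewed as an arrow in the fiber product $\sf U = (K\times_H L)\times_G (K\times_H L)$) yields the same expression; so the two identities agree.

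The main obstacle is the interchange law. Given vertically composable arrows $\alpha_1,\alpha_2$ in $\sf Mor(G,H)$ and $\beta_1,\beta_2$ in $\sf Mor(H,N)$, I must check
\[
(\beta_2\bullet\beta_1)\circ(\alpha_2\bullet\alpha_1) = (\beta_2\circ\alpha_2)\bullet(\beta_1\circ\alpha_1).
\]
My strategy is to evaluate both sides on a generic object of the iterated fiber product (of the form $(j_1,x,k_1,z,k_3,y,j_3)$ where $z$ is eventually split), using the splitting-description of $\bullet$ from \eqref{E alpha-odot-beta}: for the left-hand side, I first split $z=z_1\cdot z_2$ through some $k_2$ to compute $\alpha_2\bullet\alpha_1$, insert the result, and then split the resulting arrow in $H^1$ through some $j_2$ to apply $\beta_2\bullet\beta_1$. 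For the right-hand side, I separately horizontally-compose $\beta_i\circ\alpha_i$ and then vertically-compose. The equality will follow from two observations: first, the natural choice of intermediate splitting $j_2$ can be taken to be the one produced by one of the $\beta_i$'s on $\alpha_i(k_1,z_i,k_2)$ together with $y$ and $x$; second, once these splittings are aligned, what remains is an identity between compositions in $N^1$ that follows from the naturality of $\beta_1,\beta_2$ against the arrows $\alpha_2(k_2,z_2,k_3)$ (respectively $\alpha_1(k_1,z_1,k_2)$) transported through $\phi_i\inv$. I will verify these by chasing the relevant commutative diagram of fiber products of $\K_i$ and $\J_i$ over $\G$ and $\H$, and by repeatedly using Remark \ref{R equiv-bij-arrows}(2) to identify $(\psi_i^1)\inv$ and $(\phi_i^1)\inv$ as homomorphisms on the relevant hom-sets. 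Assuming this direct (if tedious) diagram chase works out, the two sides produce the same element of $N^1$ for every choice of intermediate splitting, and functoriality of $\circ$ follows.
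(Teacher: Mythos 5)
Your proposal is correct and takes essentially the same route as the paper: the paper's proof consists precisely of verifying the interchange law $(\beta_1\circ\alpha_1)\bullet(\beta_2\circ\alpha_2)=(\beta_1\bullet\beta_2)\circ(\alpha_1\bullet\alpha_2)$ by evaluating both sides on a generic object $(j_1,x,k_1,z,k_3,y,j_3)$ of $(\K_1\times_\H\J_1)\times_\G(\K_3\times_\H\J_3)$ and choosing the \emph{same} intermediate splitting $(k_2,w,j_2)$ with $z=z_1\cdot z_2$ on both sides, which is legitimate because $\bullet$ is independent of the choice of splitting. Your two additional verifications are harmless but not carried out in this proof: the naturality of $\beta\circ\alpha$ is deferred to Construction \ref{C hori-compose}, and preservation of units is automatic once composition is preserved, since every arrow of a groupoid is invertible.
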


\begin{proof}
For $i=1,2$, take
\begin{align*}
\alpha_i:(\psi_i,\K_i,\u_i)\rto(\psi_{i+1},\K_{i+1},\u_{i+1})&:\G\rhpu \H\\
\beta_i:(\phi_i,\J_i,\v_i)\rto(\phi_{i+1},\J_{i+1},\v_{i+1})&:\H\rhpu \N.
\end{align*}
Then we need to show that
\[
(\beta_1\circ\alpha_1)\bullet(\beta_2\circ\alpha_2)\stackrel{?}{=}
(\beta_1\bullet\beta_2)\circ(\alpha_1\bullet\alpha_2).
\]
Note that they are both defined on the object space of
\[
\Q=(Q^1\rrto Q^0):=(\K_1\times_\H\J_1)\times_\G
(\K_3\times_\H\J_3).
\]
We first compute $(\beta_1\circ\alpha_1)\bullet(\beta_2\circ\alpha_2)$. Take an object $(j_1,x,k_1,z,k_3,y,j_3)\in Q^0$. By definition
\begin{align*}
&(\beta_1\circ\alpha_1)\bullet(\beta_2\circ\alpha_2)
(j_1,x,k_1,z,k_3,y,j_3)\\
=&\beta_1\circ\alpha_1(j_1,x,k_1,z_1,k_2,w,j_2)
\cdot\beta_2\circ\alpha_2(j_2,w,k_2,z_2,k_3,y,j_3)
\end{align*}
for some $(k_2,w,j_2)\in (\K_2\times_\H\J_2)^0$ and $z=z_1\cdot z_2$ in $G^1$. Then we get
\begin{align*}
&(\beta_1\circ\alpha_1)\bullet(\alpha_2\circ\beta_2) (j_1,x,k_1,z,k_3,y,j_3)\\
=&\beta_1(j_1,x\inv\cdot\alpha_1(k_1,z_1,k_2)\cdot w,j_2)
\cdot
\beta_2(j_2,w\inv\cdot\alpha_2(k_2,z_2,k_3)\cdot y,j_3)
\end{align*}
Similarly
\begin{align*}
&(\beta_1\bullet\beta_2)\circ(\alpha_1\bullet\alpha_2)
(j_1,x,k_1,z,k_3,y,j_3)\\
&=\beta_1\bullet\beta_2
(j_1,x\inv\cdot\alpha_1\bullet\alpha_2(k_1,z,k_3)\cdot y,j_3)\\
&=\beta_1\bullet\beta_2(j_1, x\inv\cdot\alpha_1(k_1,z_1,k_2)
\cdot\alpha_2(k_2,z_2,k_3)\cdot y, j_3)\\
&=\beta_1(j_1,x\inv\cdot\alpha_1(k_1,z_1,k_2)\cdot w,j_2)
\cdot \beta_2(j_2,w\inv\cdot\alpha_1(k_1,z_1,k_2)\cdot y,j_3),
\end{align*}
with $k_2,w,j_2,z_1,z_2$ being same as those above. This finishes the proof.
\end{proof}

\begin{lem}\label{L horizontal-asso}
Under the canonical isomorphism of fiber products in Lemma \ref{L fib-prdct-asso}, the horizontal composition functor ``$\circ$'' is associative.
\end{lem}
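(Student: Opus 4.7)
Let $(\psi,\K,\u):\G\rhpu\H$, $(\phi,\L,\v):\H\rhpu\N$, and $(\varphi,\M,\w):\N\rhpu\P$ be three composable morphisms. Iterating Definition \ref{D copose-morphism}, the left bracketing produces
\[
\bigl((\varphi,\M,\w)\circ(\phi,\L,\v)\bigr)\circ(\psi,\K,\u)
= \bigl(\psi\circ\pi_1\circ\pi_1,\,(\K\times_\H\L)\times_\N\M,\,\w\circ\pi_2\bigr),
\]
while the right bracketing produces
\[
(\varphi,\M,\w)\circ\bigl((\phi,\L,\v)\circ(\psi,\K,\u)\bigr)
= \bigl(\psi\circ\pi_1,\,\K\times_\H(\L\times_\N\M),\,\w\circ\pi_2\circ\pi_2\bigr).
\]
The plan is to take the canonical isomorphism $\Theta:(\K\times_\H\L)\times_\N\M\xrightarrow{\cong}\K\times_\H(\L\times_\N\M)$ of Lemma \ref{L fib-prdct-asso} as the witness of associativity, and to verify two compatibilities: (i) $\Theta$ intertwines the two resulting $1$-morphisms $\G\rhpu\P$ on the nose, and (ii) the horizontal compositions of $2$-arrows produced by Construction \ref{C hori-compose} agree after transport by $\Theta$.

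For (i), on objects $\Theta$ sends a triple $\bigl((k,x,\ell),y,m\bigr)$ to $\bigl(k,x,(\ell,y,m)\bigr)$, and similarly for arrows; in particular it preserves the distinguished first $\K$-component and last $\M$-component. Consequently $\psi\circ\pi_1\circ\pi_1 = (\psi\circ\pi_1)\circ\Theta$ and $(\w\circ\pi_2\circ\pi_2)\circ\Theta = \w\circ\pi_2$, giving the required intertwining of legs. This step is routine.

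For (ii), I would take three horizontally composable arrows $\alpha,\beta,\gamma$ and expand both iterated horizontal compositions using the sandwich formula $\beta\circ\alpha(\ldots,k_1,z,k_2,\ldots) = \beta(\ldots,x^{-1}\cdot\alpha(k_1,z,k_2)\cdot y,\ldots)$ of Construction \ref{C hori-compose}. A direct substitution shows that on an object parametrised by data transported via $\Theta$ (together with the analogous isomorphism for the source fiber products entering the domain of the $2$-arrow), both $(\gamma\circ\beta)\circ\alpha$ and $\gamma\circ(\beta\circ\alpha)$ evaluate to the same triply nested expression
\[
\gamma\Bigl(\ldots,\; y''^{-1}\cdot\beta\bigl(\ldots,\,x^{-1}\cdot\alpha(\ldots)\cdot y',\,\ldots\bigr)\cdot y''',\;\ldots\Bigr),
\]
since the sandwich pattern $x^{-1}\cdot(-)\cdot y$ is itself associative under composition in the target groupoid. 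The main obstacle will be the bookkeeping: one must carefully match the arrows in $\G^1,\H^1,\N^1$ that appear as tuple entries on one side with those appearing on the other, so as to exhibit the two nested expressions as literally the same string of compositions after $\Theta$ is applied. Once the labels are aligned, the equality is a term-by-term comparison requiring nothing beyond the associativity of composition in $\H$ and $\N$, Remark \ref{R equiv-bij-arrows}(2) for the inverses $(\psi^1)^{-1}$, $(\phi^1)^{-1}$, $(\varphi^1)^{-1}$ appearing implicitly, and Lemma \ref{L fib-prdct-asso}.
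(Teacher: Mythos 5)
Your proposal is correct and follows essentially the same route as the paper: identify the two bracketings of the $1$-morphism carriers via the canonical isomorphism of Lemma \ref{L fib-prdct-asso} (applied also to the fiber product over $\G$ that carries the $2$-arrows), then expand $(\gamma\circ\beta)\circ\alpha$ and $\gamma\circ(\beta\circ\alpha)$ with the sandwich formula and observe that both collapse to the same triply nested expression. The paper's proof is exactly this term-by-term computation, so no further comment is needed.
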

\begin{proof}
Take three arrows in $\mathrm{Mor}^1$ as follow:
\[
\begin{tikzpicture}
\def \x{2}
\def \y{0.8}
~
\node (A00) at (0,0)       {$\sf G$};
\node (A10) at (\x,0)      {$\I_{12}$};
\node (A20) at (2*\x,0)    {$\sf H$};
\node (A30) at (3*\x,0)    {$\J_{12}$};
\node (A40) at (4*\x,0)    {$\sf N$};
\node (A50) at (5*\x,0)    {$\K_{12}$};
\node (A60) at (6*\x,0)    {$\sf M$};
~
\node (A01) at (\x,\y)       {$\I_1$};
\node (A11) at (3*\x,\y)     {$\J_1$};
\node (A21) at (5*\x,\y)     {$\K_1$};
~
\node (A0-1) at (\x,-1*\y)       {$\I_2$};
\node (A1-1) at (3*\x,-1*\y)     {$\J_2$};
\node (A2-1) at (5*\x,-1*\y)     {$\K_2$};
~
\path (A00) edge [<-] node [auto] {$\scriptstyle{\psi_1}$} (A01);
\path (A00) edge [<-] node [auto,swap] {$\scriptstyle{\psi_2}$} (A0-1);
~
\path (A10) edge [->] node [auto] {$\scriptstyle{\pi_1}$} (A01);
\path (A10) edge [->] node [auto,swap] {$\scriptstyle{\pi_2}$} (A0-1);
~
\path (A01) edge [->] node [auto] {$\scriptstyle{\u_1}$} (A20);
\path (A0-1) edge [->] node [auto,swap] {$\scriptstyle{\u_2}$} (A20);
~~~~~~
\path (A20) edge [<-] node [auto] {$\scriptstyle{\phi_1}$} (A11);
\path (A20) edge [<-] node [auto,swap] {$\scriptstyle{\phi_2}$} (A1-1);
~
\path (A30) edge [->] node [auto] {$\scriptstyle{\pi_1}$} (A11);
\path (A30) edge [->] node [auto,swap] {$\scriptstyle{\pi_2}$} (A1-1);
~
\path (A11) edge [->] node [auto] {$\scriptstyle{\v_1}$} (A40);
\path (A1-1) edge [->] node [auto,swap] {$\scriptstyle{\v_2}$} (A40);
~~~~~~
\path (A40) edge [<-] node [auto] {$\scriptstyle{\varphi_1}$} (A21);
\path (A40) edge [<-] node [auto,swap] {$\scriptstyle{\varphi_2}$} (A2-1);
~
\path (A50) edge [->] node [auto] {$\scriptstyle{\pi_1}$} (A21);
\path (A50) edge [->] node [auto,swap] {$\scriptstyle{\pi_2}$} (A2-1);
~
\path (A21) edge [->] node [auto] {$\scriptstyle{\w_1}$} (A60);
\path (A2-1) edge [->] node [auto,swap] {$\scriptstyle{\w_2}$} (A60);
~~~~~~~~~
\node at (1.5*\x,0)  {$\Downarrow\alpha_1$};
\node at (3.5*\x,0)  {$\Downarrow\alpha_2$};
\node at (5.5*\x,0)  {$\Downarrow\alpha_3$};
\end{tikzpicture}
\]
with $\I_{12}=\I_1\times_\G \I_2$, $\J_{12}=\J_1\times_\H \J_2$, $\K_{12}=\K_1\times_\N \K_2$. We first consider the compositions of $(\psi_1,\I_1,\u_1)$, $(\phi_1,\J_1,\v_1)$ and $(\varphi_1,\K_1,\w_1)$. We get two compositions
\[
(\psi\circ\pi_1,\I_1\times_\H(\J_1\times_\N\K), \w\circ\pi_2\circ\pi_2)
\qq
\mbox{and}
\qq
(\psi\circ\pi_1\circ\pi_1,(\I_1\times_\H\J_1)\times_\N\K, \w\circ\pi_2).
\]
Then via the canonical isomorphism $\I_1\times_\H(\J_1\times_\N\K)\cong (\I_1\times_\H\J_1)\times_\N\K$ we could identify them. From this natural identification we could get a natural arrow between them. We next show that via such canonical isomorphisms of fiber products, we can also identify $(\alpha_3\circ\alpha_2)\circ\alpha_1$ with $\alpha_3\circ(\alpha_2\circ\alpha_1)$.

The arrow $(\alpha_3\circ\alpha_2)\circ\alpha_1$ is a natural transformation between strict morphisms over
\[
\A:= [(\I_1\times_\H\J_1)\times_\N\K_1]
\times_\G[(\I_2\times_\H\J_2)\times_\N\K_2]
\]
and $\alpha_3\circ(\alpha_2\circ\alpha_1)$ is a natural transformation between strict morphisms over
\[
\B:= [\I_1\times_\H(\J_1\times_\N\K_1)]
\times_\G[\I_2\times_\H(\J_2\times_\N\K_2)].
\]
Under the canonical isomorphisms for fiber products given by Lemma \ref{L fib-prdct-asso} we get a canonical isomorphism $\sf A\cong B$. In particular, the identification over objects is given by
\[\begin{tikzpicture}
\def \x{5}
\node at (0,0) {$\xymatrix{
(i_1\ar@{.>}[r]^x   \ar@{.>}[r]_{\sf H}
   \ar@{.>}[d]_{z} \ar@{.>}[d]^{\sf G} &
j_1)\ar@{.>}[r]^y   \ar@{.>}[r]_{\sf N} &
k_1\\
(i_2\ar@{.>}[r]^{\tilde x}  \ar@{.>}[r]_{\sf H} &
j_2)\ar@{.>}[r]^{\tilde y}  \ar@{.>}[r]_{\sf N} &
k_2,
}$};
\node at (\x,0) {$\xymatrix{
i_1\ar@{.>}[r]^x   \ar@{.>}[r]_{\sf H}
   \ar@{.>}[d]_{z} \ar@{.>}[d]^{\sf G} &
(j_1\ar@{.>}[r]^y   \ar@{.>}[r]_{\sf N} &
k_1)\\
i_2\ar@{.>}[r]^{\tilde x}  \ar@{.>}[r]_{\sf H} &
(j_2\ar@{.>}[r]^{\tilde y}  \ar@{.>}[r]_{\sf N} &
k_2).
}$};
\node at (0.5*\x,0) {$\leftrightarrow$};
\end{tikzpicture}
\]
We write them both as $((i_1,x,j_1,y,k_1),z,(i_2,\tilde x,j_2,\tilde y,k_2))$. Then by definition of horizontal composition of arrows we have
\begin{align*}
(\alpha_3\circ\alpha_2)\circ\alpha_1
((i_1,x,j_1,y,k_1),z,(i_2,\tilde x,j_2,\tilde y,k_2))
&=\alpha_3\circ\alpha_2(j_1,y,k_1,x\inv \cdot
\alpha_1(i_1,z,i_2)\cdot\tilde x,j_2,\tilde y,k_2)\\
&=\alpha_3(k_1,y\inv \cdot
\alpha_2(j_1,x\inv\cdot\alpha_1(i_1,z,i_2)\cdot \tilde x,j_2)\cdot\tilde y,k_2),
\end{align*}
and
\begin{align*}
\alpha_3\circ(\alpha_2\circ\alpha_1)
((i_1,x,j_1,y,k_1),z,(i_2,\tilde x,j_2,\tilde y,k_2))
&=\alpha_3(k_1,y\inv \cdot \alpha_2\circ\alpha_1(i_1,x,j_1,z,i_2,\tilde x,j_2)\cdot\tilde y,k_2)\\
&=\alpha_3(k_1,y\inv \cdot
\alpha_2(j_1,x\inv\cdot\alpha_1(i_1,z,i_2)\cdot \tilde x,j_2)\cdot\tilde y,k_2).
\end{align*}
Hence we could identify the two kinds of compositions of morphisms and arrows simultaneously via the canonical isomorphisms of fiber products. This finishes the proof.
\end{proof}

\subsection{Composition functor ``$\tcirc$''}

\begin{defn} \label{D comp-full-mor}
Given two full-morphisms $\sf (\psi,K,u):G\rhpu H$ and $\sf (\phi,L,v):H\rhpu N$, the {\em composition} is defined to be
\[
{\sf (\psi,K,u)\,\tilde \circ\, (\phi,L,v)}:=
(\psi\circ\tilde \pi_1, {\sf K\ctimes_H L,v}\circ\tilde \pi_2):
\G\rhpu \N,
\]
which is summarized in the following diagram
\[
\xymatrix{&&
\K\ctimes_\H \L\ar[dl]_{\tilde \pi_1}\ar[dr]^{\tilde \pi_2} &&\\
\G &\K \ar[l]_\psi\ar[r]^{\u} & \H &
\L\ar[l]_\phi\ar[r]^{\v} & \N.}
\]
\end{defn}

Now suppose we have two arrows between full-morphisms $(\psi_1,\K_1,\u_1)\xrightarrow{\alpha} (\psi_2,\K_2,\u_2):\sf G\rhpu H$, $(\phi_1,\J_1,\v_1)\xrightarrow{\beta} (\phi_2,\J_2,\v_2): \sf H\rhpu N$. The horizontal composition $\beta\,\tilde\circ\,\alpha$ should be an arrow
\[
(\phi_1,\J_1,\v_1)\,\tilde\circ\,(\psi_1,\K_1,\u_1)
\xrightarrow{\beta\,\tilde\circ\,\alpha}
(\phi_2,\J_2,\v_2)\,\tilde\circ\,(\psi_2,\K_2,\u_2),
\]
i.e. an arrow $(\psi_1\circ\tilde\pi_1, \K_1\ctimes_\H \J_1, \v_1\circ\tilde\pi_2) \xrightarrow{\beta\,\tilde\circ\,\alpha} (\psi_2\circ\tilde\pi_2, \K_2\ctimes_\H \J_2, \v_2\circ\tilde\pi_2)$.

Unlike the horizontal composition of arrows between morphisms in previous subsection, the construction of horizontal composition of arrows between full-morphisms is slightly subtle. We now describe the construction.

\begin{construction}\label{C hori-compose-full}
Set $\tilde \K_{12}:=\K_1\ctimes_\G \K_2$,$\tilde \L:=\K_1\ctimes_\H\J_1$, $\tilde \J_{12}:=\J_1\ctimes_\H \J_2$, $\tilde \M:=\K_2\ctimes_\H\J_2$, and $\tilde \U:=\sf L\ctimes_G M$. We have the following diagram (comparing with \eqref{E dig-hori-cpose})
\begin{align}\label{E dig-hori-cpose-full}
\begin{split}
\begin{tikzpicture}
\def \x{2.5}
\def \y{0.8}
\node (A00)   at (0,0)        {$\sf G$};
~
\node (A11)   at (\x,\y)      {$\K_1$};
\node (A10)   at (\x,0)       {$\tilde \K_{12}$};
\node (A1-1)  at (\x,-1*\y)   {$\K_2$};
~
\node (A21)   at (2*\x,\y)    {$\sf \tilde L$};
\node (A20)   at (2*\x,0)     {$\sf H$};
\node (A2-1)  at (2*\x,-1*\y) {$\sf \tilde M$};
~
\node (A31)   at (3*\x,\y)    {$\J_1$};
\node (A30)   at (3*\x,0)     {$\tilde \J_{12}$};
\node (A3-1)  at (3*\x,-1*\y) {$\J_2$};
~
\node (A40)   at (4*\x,0)     {$\sf N$};
~
\node at (1.5*\x,0) {$\Downarrow\alpha$};
\node at (3.5*\x,0) {$\Downarrow\beta$};
~
\node (A-10) at (-0.3*\x,0) {$\sf \tilde U$};
~
\path (A00) edge [
<-]  node [auto] {$\scriptstyle{\psi_1}$} (A11);
\path (A00) edge [
<-]  node [auto,swap] {$\scriptstyle{\psi_2}$} (A1-1);
~
\path (A10) edge [->]  node [auto] {$\scriptstyle{\tilde \pi_1}$} (A11);
\path (A10) edge [->]  node [auto,swap] {$\scriptstyle{\tilde \pi_2}$} (A1-1);
~
\path (A11) edge [->]  node [auto] {$\scriptstyle{\u_1}$} (A20);
\path (A1-1) edge [->]  node [auto,swap] {$\scriptstyle{\u_2}$} (A20);
~
\path (A11) edge [
<-]  node [auto] {$\scriptstyle{\tilde \pi_1}$} (A21);
\path (A1-1) edge [
<-]  node [auto,swap] {$\scriptstyle{\tilde \pi_1}$} (A2-1);
~
\path (A21) edge [
->]  node [auto] {$\scriptstyle{\tilde \pi_2}$} (A31);
\path (A2-1) edge [
->]  node [auto,swap] {$\scriptstyle{\tilde \pi_2}$} (A3-1);
~
\path (A20) edge [<-]  node [auto] {$\scriptstyle{\phi_1}$} (A31);
\path (A20) edge [<-]  node [auto,swap] {$\scriptstyle{\phi_2}$} (A3-1);
~
\path (A30) edge [->]  node [auto] {$\scriptstyle{\tilde \pi_1}$} (A31);
\path (A30) edge [->]  node [auto,swap] {$\scriptstyle{\tilde \pi_2}$} (A3-1);
~
\path (A40) edge [
<-]  node [auto,swap] {$\scriptstyle{\v_1}$} (A31);
\path (A40) edge [
<-]  node [auto] {$\scriptstyle{\v_2}$} (A3-1);
~
\path (A-10) edge [
->,bend left=30]node[auto]{$\scriptstyle{\tilde \pi_1}$} (A21);
\path (A-10) edge [
->,bend right=30]node[auto,swap] {$\scriptstyle{\tilde \pi_2}$} (A2-1);
~
\path (A21) edge [
->,bend left=30]
                node[auto]{$\scriptstyle{\v_1\circ\tilde \pi_2}$} (A40);
\path (A2-1) edge [
->,bend right=30]
                node[auto,swap] {$\scriptstyle{\v_2\circ\tilde \pi_2}$} (A40);
\end{tikzpicture}\end{split}
\end{align}
The  arrow $\beta\,\tilde\circ\,\alpha$ we want is a natural transformation $\v_1\circ\tilde \pi_2\circ\tilde \pi_1\stackrel{\beta\,\tilde\circ\,\alpha}{\Longrightarrow} \v_2\circ\tilde \pi_2\circ\tilde \pi_2: \sf \tilde U\rto N$. An object in $\tilde U^0$ is of the form $(k_1,j_1,k_2,j_2)$ with $u^0_1(k_1)=\phi^0_1(j_1)$, $ u^0_2(k_2)=\phi^0_2(j_2)$ in $ H^0$ and $\psi^0_1(k_1)=\psi^0_2(k_2)$ in $ G^0$. It is mapped by $\v_1\circ\tilde \pi_2\circ\tilde \pi_1$ and $\v_2\circ\tilde \pi_2\circ\tilde \pi_2$ respectively to $v^0_1(j_1)$ and $v^0_2(j_2)$. From $\psi^0_1(k_1)=\psi^0_2(k_2)$ we see that $(k_1,k_2)\in \tilde K_{12}^0$, hence we get an arrow in $ H^1$ from the arrow $(\psi_1,\K_1,\u_1)\xrightarrow{\alpha}(\psi_2,\K_2,\u_2)$
\[
\phi^0_1(j_1) = u_1^0(k_1) \xrightarrow{\alpha(k_1,k_2)}
 u_2^0(k_2)=\phi_2^0(j_2).
\]
This gives us an object
\[
\xymatrix{j_1\ar@{.>}[rr]^{\alpha(k_1,k_2)}& & j_2}
\in (\J_1\times_{\phi_1,\H,\phi_2}\J_2)^0.
\]
Only if $\phi_1^0(j_1)=\phi_2^0(j_2)$ and $\alpha(k_1,k_2)=1_{\phi_1^0(j_1)}$ we get an object
\[
\xymatrix{j_1\ar@{.>}[rr]^{\alpha(k_1,k_2)} && j_2}
\in\tilde J_{12}^0=(\J_1\ctimes_{\phi_1,\H,\phi_2}\J_2)^0.
\]
In general this is not the case. However since by Lemma \ref{L Go*/HK-eq-G*/HK}, the natural strict morphism $\q:\tilde \J_{12}\rto \J_1\times_{\phi_1,\H,\phi_2}\J_2$ is an equivalence, we could get an arrow in $\tilde \J_{12}$ as follow.

Since $\phi_1$ and $\phi_2$ are both full-equivalences, there are $j_{1,2}\in  J_2^0$, and $j_{2,1}\in J_1^0$  such that
\[
\phi_1^0(j_{2,1})=\phi_2^0(j_2),\qq \mbox{and}\qq
\phi_2^0(j_{1,2})=\phi_1^0(j_1).
\]
Therefore $(j_1,j_{1,2}),\; (j_{2,1},j_2)\in \tilde J_{12}^0$. Via the equivalence $\phi_1\circ\tilde \pi_1$, (by Lemma \ref{L eqs-compose-eq}, $\phi_1\circ\tilde \pi_1$ is an equivalence), these two objects in $\tilde J_{12}^0$ are mapped respectively to $\phi_1^0(j_1)$ and $\phi^0_1(j_{2,1})=\phi^0_2(j_2)$, which are connected by $\alpha(k_1,k_2)$. Hence by Remark \ref{R equiv-bij-arrows} there is a unique arrow in $\tilde J_{12}^1$
\[
(j_1,j_{1,2}) \xrightarrow{[(\phi_1\circ\tilde \pi_1)^1]\inv(\alpha(k_1,k_2))} (j_{2,1},j_2).
\]
Denote by $(x_\alpha,y_\alpha)=[(\phi_1\circ\tilde \pi_1)^1]\inv(\alpha(k_1,k_2))$.

\begin{rem}
In fact
\[
x_\alpha=(\phi_1^1)\inv(\alpha(k_1,k_2)),\qq y_\alpha=(\phi_2^1)\inv(\alpha(k_1,k_2)).
\]
We now explain this. Since $\phi_1$ and $\phi_2$ are both full-equivalences, then from $(j_1,j_{2,1})$ and $(j_{1,2},j_2)$ we get unique arrows
\[
(\phi_1^1)\inv(\alpha(k_1,k_2)):j_1\rto j_{2,1},\qq\mbox{and} \qq  (\phi_2^1)\inv(\alpha(k_1,k_2)):j_{1,2}\rto j_2.
\]
Then by the bijections of arrows under equivalence we have
\[
((\phi_1^1)\inv(\alpha(k_1,k_2)),(\phi_2^1)\inv(\alpha(k_1,k_2)))
=[(\phi_1\circ\tilde \pi_1)^1]\inv(\alpha(k_1,k_2))
=(x_{\alpha},y_{\alpha}).
\]
We can also use $\phi_2\circ\tilde \pi_2$ to get $[(\phi_2\circ\tilde \pi_2)^1]\inv(\alpha(k_1,k_2))$. However we get nothing else but
\[
[(\phi_1\circ\tilde \pi_1)^1]\inv(\alpha(k_1,k_2))
=[(\phi_2\circ\tilde \pi_2)^1]\inv(\alpha(k_1,k_2))=(x_\alpha,y_\alpha).
\]
\end{rem}

Let us continue the construction. By applying the natural transformation $\beta$ to the arrow $(x_\alpha,y_\alpha)$ we get a commutative diagram in $ N^1$
\[
\xymatrix{
 v_1^0(j_1)\ar[rr]^{\beta(j_1,j_{1,2})}\ar[d]_{ v_1^1(x_\alpha)}&&
 v_2^0(j_{1,2})\ar[d]^{ v_2^1(y_\alpha)}\\
 v_1^0(j_{2,1})\ar[rr]^{\beta(j_{2,1},j_2)}&&
 v_2^0(j_2)}
\]
We define $\beta\,\tilde\circ\,\alpha:\tilde U^0 \rto N^1$ to be
\[
\beta \,\tilde\circ\,\alpha(k_1,j_1;k_2,j_2):=
\beta(j_1,j_{1,2})\cdot v_2^1(y_\alpha)=
v_1^1(x_\alpha)\cdot\beta(j_{2,1},j_2).
\]
It is direct to see that this definition does not depend on the choices of $j_{1,2}$ and $j_{2,1}$, and gives us an arrow $(\phi_1,\L_1,\v_1)\circ(\psi_1,\K_1,\u_1) \xrightarrow{\beta\,\tilde\circ\,\alpha} (\phi_2,\L_2,\v_2)\circ(\psi_2,\K_2,\u_2)$. This finishes the construction.
\end{construction}

\begin{rem}\label{R circ-to-tilde-circ}
We can also get $\beta\,\tilde\circ\,\alpha$ via the horizontal composition in $\mathrm{Mor}^1$ and $i^1$ in Theorem \ref{T equi-i}. The procedure is similar to the way to get $\tilde \bullet$ from $\bullet$ in Remark \ref{R bullet-to-tilde-bullet}.  Given $\alpha$ and $\beta$ as above, we get $i^1(\alpha)$ and $i^1(\beta)$. Then we have
\[
\beta\,\tilde\circ\,\alpha=(i^1(\beta)\circ i^1(\alpha))\circ q^0
\]
with $q^0:\tilde U^0\rto U^0$.
\end{rem}

Via this remark we have similar results for $\,\tilde\circ\,$ as Lemma \ref{L hori-functor}.

\begin{lem}\label{L hori-functor-full}
Combining with composition of full-morphisms we also get a horizontal composition functor
\[
\tilde\circ\,:\sf FMor(G,H)\times FMor(H,M)\rto FMor(G,N)
\]
i.e. the vertical and horizontal composition of arrows between full-morphisms are compatible. Therefore $\,\tilde\circ\,$ is a strict groupoid morphism.
\end{lem}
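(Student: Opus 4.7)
The plan is to bootstrap this lemma from the analogous statement for $\mathrm{Mor}$ (Lemma \ref{L hori-functor}) by transporting along the equivalence $\i:\sf FMor(G,H)\rto Mor(G,H)$ of Theorem \ref{T equi-i}. What has to be checked is that $\tilde\circ$ is well defined on arrows (which is Construction \ref{C hori-compose-full}), that it sends identity arrows to identity arrows, and that it satisfies the interchange identity
\[
(\beta_1\,\tilde\circ\,\alpha_1)\,\tilde\bullet\,(\beta_2\,\tilde\circ\,\alpha_2)
= (\beta_1\,\tilde\bullet\,\beta_2)\,\tilde\circ\,(\alpha_1\,\tilde\bullet\,\alpha_2)
\]
for vertically composable pairs $\alpha_1,\alpha_2\in \mathrm{FMor}^1(\G,\H)$ and $\beta_1,\beta_2\in \mathrm{FMor}^1(\H,\N)$. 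Note also that composability of full-morphisms under $\tilde\circ$ is covered by Lemma \ref{L feqs-compose-feq}, so the target really lies in $\sf FMor(G,N)$.

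The key observation is Remark \ref{R circ-to-tilde-circ} together with Remark \ref{R bullet-to-tilde-bullet}: both the horizontal composite $\beta\,\tilde\circ\,\alpha$ and the vertical composite $\gamma\,\tilde\bullet\,\delta$ in $\mathrm{FMor}^1$ are obtained from the corresponding operations in $\mathrm{Mor}^1$ by pulling back along the strict morphism $\q$ of \eqref{E q-geo-fbi-to-fib}, i.e.\ $\beta\,\tilde\circ\,\alpha = (i^1(\beta)\circ i^1(\alpha))\circ q^0$ and $i^1(\gamma\,\tilde\bullet\,\delta)=i^1(\gamma)\bullet i^1(\delta)$ (the latter is part of the statement that $\i$ is a strict morphism, already proved in Theorem \ref{T equi-i}). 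Substituting these into the two sides of the interchange identity reduces the left hand side to
\[
\bigl((i^1(\beta_1)\circ i^1(\alpha_1))\bullet (i^1(\beta_2)\circ i^1(\alpha_2))\bigr)\circ q^0,
\]
and the right hand side to
\[
\bigl((i^1(\beta_1)\bullet i^1(\beta_2))\circ (i^1(\alpha_1)\bullet i^1(\alpha_2))\bigr)\circ q^0.
\]
Lemma \ref{L hori-functor} asserts exactly the equality of the two expressions inside the outer parentheses, so precomposing with $q^0$ gives the desired identity.

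For unit preservation, apply Construction \ref{C hori-compose-full} to $\alpha=\msf 1_{\sf (\psi,K,u)}$ and $\beta=\msf 1_{\sf (\phi,L,v)}$. On an object $(k_1,j_1,k_2,j_2)\in\tilde U^0$ with $\psi^0(k_1)=\psi^0(k_2)$ and $u^0(k_i)=\phi^0(j_i)$ one has $\alpha(k_1,k_2)=u^1((\psi^1)^{-1}(1_{\psi^0(k_1)}))=1_{u^0(k_1)}$ by Lemma \ref{L unit-ful-arrow}, so the arrow $(x_\alpha,y_\alpha)$ in $\tilde\J_{12}$ produced in the construction is $([\phi_1^1]^{-1}(1),[\phi_2^1]^{-1}(1))$, i.e.\ an identity pair. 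Plugging into the defining formula for $\beta\,\tilde\circ\,\alpha$ and using the formula for $\msf 1_{\sf (\phi,L,v)}$ from Lemma \ref{L unit-ful-arrow} again, one obtains exactly the unit arrow for the composite full-morphism $\sf (\phi,L,v)\,\tilde\circ\,(\psi,K,u)$ as prescribed by Lemma \ref{L unit-ful-arrow}.

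The only real obstacle is administrative, namely making sure that the source and target full-morphisms of the various expressions coincide (so the vertical compositions are actually defined) and that "precompose with $q^0$" is compatible with the four different strict fiber products that appear on the two sides of the interchange identity. This is verified by noting that each $q^0$ in question is the canonical injection into the corresponding ordinary fiber product, and these canonical injections are compatible with the structural projections used in Constructions \ref{C vert-compose-full} and \ref{C hori-compose-full}; equivalently, one appeals to the naturality of $\q$ expressed in Lemma \ref{L fib-prdct-asso}. Once this bookkeeping is in place, the interchange identity for $\tilde\bullet$ and $\tilde\circ$ is merely the $q^0$-pullback of the interchange identity for $\bullet$ and $\circ$ proved in Lemma \ref{L hori-functor}, and the lemma follows.
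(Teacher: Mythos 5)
Your proposal is correct and follows essentially the same route as the paper: the paper's own justification for this lemma is precisely to invoke Remark \ref{R circ-to-tilde-circ} (and Remark \ref{R bullet-to-tilde-bullet}), i.e.\ to express $\tilde\circ$ and $\tilde\bullet$ as the $q^0$-pullbacks of $\circ$ and $\bullet$ through the equivalence $\i$ of Theorem \ref{T equi-i} and then transport the interchange identity of Lemma \ref{L hori-functor}. Your version simply spells out the unit-preservation check and the bookkeeping about matching the various strict fiber products, which the paper leaves implicit.
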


Similar to Lemma \ref{L horizontal-asso} we have
\begin{lem}\label{L hori-compose-full-asso}
The horizontal composition functor $\,\tilde\circ\,$ is associative under the canonical isomorphism of strict fiber product of three groupoids in Lemma \ref{L fib-prdct-asso}.
\end{lem}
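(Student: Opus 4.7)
The plan is to reduce the claimed associativity of $\,\tilde\circ\,$ to the associativity of $\circ$ established in Lemma~\ref{L horizontal-asso}, via the equivalence $\i:\sf FMor\rto Mor$ from Theorem~\ref{T equi-i} together with the translation formula of Remark~\ref{R circ-to-tilde-circ}. That remark expresses every horizontal composition of full-morphism arrows as the pre-composition with $q^0$ of the corresponding horizontal composition in $\mathrm{Mor}^1$, and the key point is that this translation iterates cleanly for triples of composable arrows.

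Concretely, I fix three composable arrows $\alpha_1\in\mathrm{FMor}^1(\G,\H)$, $\alpha_2\in\mathrm{FMor}^1(\H,\N)$, $\alpha_3\in\mathrm{FMor}^1(\N,\M)$. As in the proof of Lemma~\ref{L horizontal-asso}, the two bracketings $(\alpha_3\,\tilde\circ\,\alpha_2)\,\tilde\circ\,\alpha_1$ and $\alpha_3\,\tilde\circ\,(\alpha_2\,\tilde\circ\,\alpha_1)$ are natural transformations defined respectively over a left- and a right-associated iterated strict fiber product $\tilde\A$ and $\tilde\B$. Lemma~\ref{L fib-prdct-asso} supplies the canonical isomorphism $\tilde\A\cong\tilde\B$ under which we compare them, together with a commutative square saying that the comparison maps $\q:\tilde\A\rto\A$ and $\q:\tilde\B\rto\B$ intertwine this isomorphism with the canonical $\A\cong\B$ already used for $\circ$. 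Iterating Remark~\ref{R circ-to-tilde-circ} then rewrites
\[
(\alpha_3\,\tilde\circ\,\alpha_2)\,\tilde\circ\,\alpha_1
=\bigl((i^1(\alpha_3)\circ i^1(\alpha_2))\circ i^1(\alpha_1)\bigr)\circ q^0_{\tilde\A},
\]
and symmetrically produces the right-associated analogue pre-composed with $q^0_{\tilde\B}$. Lemma~\ref{L horizontal-asso} identifies the two outer parenthesised factors on the right under $\A\cong\B$, and the intertwining property of Lemma~\ref{L fib-prdct-asso} transports that identification to one over $\tilde\A\cong\tilde\B$, which is exactly the desired statement.

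The main obstacle I anticipate is justifying that Remark~\ref{R circ-to-tilde-circ} really does iterate, i.e.\ that $i^1(\beta\,\tilde\circ\,\alpha)$ coincides with $i^1(\beta)\circ i^1(\alpha)$ as natural transformations over the appropriate ordinary fiber product. This is the statement that the extension $i^1$, canonically characterised by Lemma~\ref{L Geofib-as-subgrpd} together with the uniqueness part of the construction in Theorem~\ref{T equi-i}, is compatible with horizontal composition. Should that compatibility be more delicate than expected, a robust fallback is to evaluate both triple compositions object-wise on $\tilde\A$ and $\tilde\B$ following the explicit recipe of Construction~\ref{C hori-compose-full} (choosing compatible preimages $j_{1,2},j_{2,1}$ and their $\K$- and $\M$-analogues) and observe that both bracketings collapse to the same normal form in terms of the values of $\alpha_1,\alpha_2,\alpha_3$, mirroring the object-level computation carried out in the proof of Lemma~\ref{L horizontal-asso}.
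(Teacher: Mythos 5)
Your fallback is in fact the paper's intended argument: the paper gives no written proof of this lemma beyond the phrase ``similar to Lemma~\ref{L horizontal-asso}'', i.e.\ one is meant to evaluate both bracketings object-wise on the iterated strict fiber products following Construction~\ref{C hori-compose-full}, exactly parallel to the explicit computation in the proof of Lemma~\ref{L horizontal-asso}. That route is sound, and you should treat it as the main proof rather than a contingency.

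Your primary route has a gap that is larger than you acknowledge. The compatibility you need, $i^1(\beta\,\tilde\circ\,\alpha)=i^1(\beta)\circ i^1(\alpha)$, does not even typecheck: $i^1(\beta\,\tilde\circ\,\alpha)$ is a natural transformation whose source full-morphism has middle groupoid $\K_1\ctimes_\H\J_1$ and which is defined over $(\K_1\ctimes_\H\J_1)\times_\G(\K_2\ctimes_\H\J_2)$, whereas $i^1(\beta)\circ i^1(\alpha)$ lives over $(\K_1\times_\H\J_1)\times_\G(\K_2\times_\H\J_2)$ and its source has middle groupoid $\K_1\times_\H\J_1$; these are distinct objects of $\mathrm{Mor}^0(\G,\N)$, related only through the equivalence $\q$ of Lemma~\ref{L Go*/HK-eq-G*/HK}. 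To make the reduction work you would need an additional statement, not present in the paper, that the horizontal composition of Construction~\ref{C hori-compose} is compatible with replacing the middle groupoid of an arrow by an equivalent one via restriction along $\q$; only then does Remark~\ref{R circ-to-tilde-circ} iterate, and that remark is itself asserted without proof. So the $i^1$-based reduction needs genuinely more machinery than the direct object-level verification it is meant to replace.
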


\section{Automorphism groupoids}\label{S gpaction}

In this section we  study the morphism groupoid $\sf Mor(G,G)$ of a groupoid $\sf G$.

\subsection{Center of a groupoid}

To study the automorphisms of groupoids we introduce a new concept of centers of groupoids. We first recall the concept of groupoid action on spaces.

\begin{defn}
For a groupoid $\sf G$ and a space $X$, a {\em (left) $\sf G$-action} on $X$ consists of
\begin{itemize}
\item a map, called the {\em anchor} map, $\rho:M\rto  G^0$,

\item a {\em action map} $\mu: G^1\times_{s, G^0,\rho}M\rto M$
      satisfying
      \[
      \rho(\mu(x,p))=t(x),\qq
      \mu(1_a,p)=p,
      \qq
      \mbox{and}
      \qq
      \mu(x,\mu(y,p))=\mu(y\cdot x,p)
      \]
      whenever the terms are well defined.
\end{itemize}
\end{defn}

Given an action of $\sf G$ on $M$ there is an induced groupoid $\msf G\ltimes M$ with
\[
(\msf G\ltimes M)^0=M,\qq (\msf G\ltimes M)^1= G^1\times_{s, G^0,\rho} M,
\]
and source and target maps given by
\[
s(x,p)=p, \qq t(x,p)=\mu(x,p).
\]
Other structure maps are obvious.

For a groupoid $\sf G$ and an object $a\in  G^0$ the {\em isotropy group of $a$ in $\sf G$} is $\Gamma_a:=G^1(a,a)$, which is a group. Denote by $Z(\Gamma_a)$ the center of the isotropy group $\Gamma_a$. Set
\[
ZG^0=\bigcup_{a\in G^0} Z(\Gamma_a)\subseteq G^1.
\]
There is a $\sf G$-action on $ZG^0$, whose anchor map and action map are
\begin{align*}
\rho:ZG^0\rto G^0,&\qq x\mapsto s(x)=t(x), \\
\mu:G^1\times_{s,G^0,\rho} ZG^0\rto ZG^0,&\qq
(y,x)\mapsto y\inv\cdot x \cdot y.
\end{align*}

\begin{defn}
We define the {\em center groupoid} of $\sf G$ as ${\sf ZG:=G} \ltimes ZG^0$.
\end{defn}

There is a natural strict morphism $\pi:\sf ZG\rto G$ with $\pi^0=\rho$ and $\pi^1$ given by
\begin{eqnarray*}
\pi^1:G^1\times_{s,G^0,\rho} ZG^0\rto G^1,&\qq&(y,x)\mapsto y.
\end{eqnarray*}

\begin{defn}
By a {\em section} of $\pi:\sf ZG\rto G$ we means a section $\sigma:G^0\rto ZG^0$ of the projection $\pi^0: ZG^0\rto G^0$ such that it is invariant under the $\sf G$-action in the meaning of that for every arrow $x:a\rto b$ in $ G^1$
\begin{align}\label{E defeq-mcK(G)}
\sigma(b)=\mu(x,\sigma(a))=x\inv\cdot\sigma(a)\cdot x,\qq i.e. \qq  x\cdot \sigma(b)=\sigma(a)\cdot x.
\end{align}
\end{defn}

We denote by $\mc K(\sf G)$ the set of sections of $\pi:\sf ZG\rto G$. It is easy to see that

\begin{lem}
$\mc K(\sf G)$ is a group.
\end{lem}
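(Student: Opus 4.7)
The plan is to exhibit an explicit group structure on $\mc K(\sf G)$ by working pointwise: given $\sigma_1,\sigma_2\in\mc K(\sf G)$, define
\[
(\sigma_1\cdot\sigma_2)(a):=\sigma_1(a)\cdot\sigma_2(a)\in G^1(a,a),
\]
the identity by $e(a):=1_a$, and the inverse by $\sigma\inv(a):=\sigma(a)\inv$ taken inside the isotropy group $\Gamma_a$. I expect the verification to reduce almost entirely to two simple observations, each applied pointwise.

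First I would check that these three operations actually land back in $\mc K(\sf G)$. For $\sigma_1\cdot\sigma_2$: at each $a\in G^0$ the element $\sigma_1(a)\cdot\sigma_2(a)$ is a product of two elements of $Z(\Gamma_a)$, hence lies in $Z(\Gamma_a)\subseteq ZG^0$, and clearly projects to $a$, so it is a section of $\pi^0$. The same remark handles $e$ and $\sigma\inv$, since $1_a$ and the inverse of a central element are central. Then I would verify the invariance condition \eqref{E defeq-mcK(G)} for each. For the product, given any $x:a\rto b$ in $G^1$, I would compute
\[
x\cdot(\sigma_1\cdot\sigma_2)(b)
=x\cdot\sigma_1(b)\cdot\sigma_2(b)
=\sigma_1(a)\cdot x\cdot\sigma_2(b)
=\sigma_1(a)\cdot\sigma_2(a)\cdot x
=(\sigma_1\cdot\sigma_2)(a)\cdot x,
\]
using the invariance of $\sigma_1$ and then of $\sigma_2$. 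For $e$ the invariance $x\cdot 1_b=x=1_a\cdot x$ is immediate, and for $\sigma\inv$ one inverts the identity $x\cdot\sigma(b)=\sigma(a)\cdot x$ and applies it to the arrow $x\inv:b\rto a$.

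Finally I would verify the group axioms. Associativity of $\cdot$ on $\mc K(\sf G)$ is inherited pointwise from associativity of composition inside each $G^1(a,a)$, and the unit and inverse laws follow from the corresponding laws in the isotropy groups: $e(a)\cdot\sigma(a)=1_a\cdot\sigma(a)=\sigma(a)$ and $\sigma(a)\cdot\sigma\inv(a)=\sigma(a)\cdot\sigma(a)\inv=1_a=e(a)$, similarly on the other side.

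The only mild subtlety, which is also the only potential obstacle, is making sure centrality is genuinely preserved by all three constructions, so that the pointwise operations keep us inside $ZG^0$ rather than merely inside $G^1$; this is handled by the observation that $Z(\Gamma_a)$ is itself a subgroup of $\Gamma_a$. Everything else is a purely formal transfer of the group structure from each isotropy center to the set of equivariant sections.
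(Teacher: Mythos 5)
Your proof is correct and follows the same route as the paper, which simply notes that the multiplication is induced pointwise from composition of arrows in $G^1$ and that the unit section $a\mapsto 1_a$ is the identity; you have merely supplied the routine verifications (closure in $Z(\Gamma_a)$, preservation of the invariance condition \eqref{E defeq-mcK(G)}, and the group axioms) that the paper leaves implicit.
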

\begin{proof}
The multiplication is induced from the composition of arrows in $ G^1$. The identity for the multiplication is the unit section $1:G^0\rto ZG^0$, $a\mapsto (a,1_a)$.
\end{proof}
We call $\mc K(\sf G)$ to be the {\em center} of the groupoid $\sf G$.

\subsection{Automorphisms}

\begin{defn}\label{D automorphisms}
Let $\sf (\psi,K,u)\in \mathrm{Mor}^0(\sf G, G)$. If there exists another morphism $\sf (\phi,L,v)\in \mathrm{Mor}^0(\sf G, G)$ and two arrows
\[
\sf (\psi,K,u)\circ (\phi,L,v)\xrightarrow{\alpha}   1_G,\qq (\phi,L,v)\circ (\psi,K,u) \xrightarrow{\beta} 1_G
\]
in $\mathrm{Mor}^1(\sf G,G)$, we call $\sf (\psi,K,u)$ an {\em automorphism} of $\sf G$. So is $\sf (\phi,L,v)$.
\end{defn}

Let $\mathrm{Aut}^0(\sf G)$ be the set of automorphisms of $\sf G$ and $\mathrm{Aut}^1(\sf G)$ be the induced arrows from $\mathrm{Mor}^1(\sf G,G)$, i.e. we have the following groupoid
\[
{\sf{Aut}(G)} =(\mathrm{Aut}^1(\G)\rightrightarrows \mathrm{Aut}^0(\G)) ={\sf Mor(G,G)}|_{\mathrm{Aut}^0(\sf G)}.
\]

The main theorem of this section is:

\begin{theorem}\label{T |aut|-is-group}
$\sf Aut(G)$ is a $\mc K(\sf G)$-gerbe over its coarse space $|\sf Aut(G)|$. Moreover, $|\sf{Aut}(G)|$ is a group.
\end{theorem}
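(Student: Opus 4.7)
The plan has two pieces: first, a group structure on $|\sf Aut(G)|$ coming from the horizontal composition functor; second, a canonical identification of each isotropy group in $\sf Aut(G)$ with $\mc K(\sf G)$.

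\textbf{Group structure on $|\sf Aut(G)|$.} I would first check that the horizontal composition of Lemma \ref{L hori-functor} restricts to $\sf Aut(G)$: the composite of pseudo-inverses is a pseudo-inverse of the composite, using associativity up to canonical isomorphism from Lemma \ref{L horizontal-asso}. Passing to the coarse space, the canonical isomorphisms from Lemma \ref{L fib-prdct-asso} collapse to equalities, so associativity on $|\sf Aut(G)|$ is immediate. The class $[\sf 1_G]$ is a two-sided identity because $\pi_1:\K\times_\G \G\rto \K$ is an equivalence (Lemma \ref{L G*/HK-equiv G}), which produces an explicit arrow $\sf 1_G\circ(\psi,K,u)\xrightarrow{\sim}(\psi,K,u)$ in $\sf Mor(G,G)$ given on objects $((k,x,a),y,k')$ by $x\inv\cdot u^1((\psi^1)\inv(y))$, and symmetrically on the right. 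Inverses on $|\sf Aut(G)|$ exist by Definition \ref{D automorphisms}.

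\textbf{Isotropy isomorphism.} The key claim is a canonical group isomorphism
$$\Theta_{\sf (\psi,K,u)}:\mathrm{Aut}^1(\G)((\psi,\K,\u),(\psi,\K,\u))\xrightarrow{\sim} \mc K(\sf G)$$
for every $\sf (\psi,K,u)\in\mathrm{Aut}^0(G)$. A necessary preliminary step is to extract from Definition \ref{D automorphisms} that $\u$ itself is an equivalence of groupoids: the existence of the 2-arrows $\alpha,\beta$, together with Lemmas \ref{L eqs-compose-eq} and \ref{L f-equiv=g-equiv}, forces $\u$ to be essentially surjective, full and faithful. Granting this, a self-arrow is a natural transformation $\u\circ\pi_1\Rto \u\circ\pi_2:\K\times_\G\K\rto\G$, and evaluating at the diagonal object gives $\eta_\alpha(k):=\alpha(k,1_{\psi^0(k)},k)\in\Gamma_{u^0(k)}$. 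Applying naturality to the arrow $(y,(k,1,k),y):(k,1,k)\to (k,1,k)$ for $y\in\Gamma_k$ (whose compatibility $\psi^1(y)\cdot 1=1\cdot\psi^1(y)$ is trivial) yields $u^1(y)\cdot\eta_\alpha(k)=\eta_\alpha(k)\cdot u^1(y)$; by fullness of $\u$ this means $\eta_\alpha(k)\in Z(\Gamma_{u^0(k)})$.

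To build $\sigma_\alpha\in\mc K(\sf G)$, for $a\in G^0$ I would choose $k\in K^0$ and $\xi:u^0(k)\to a$ in $G^1$ (possible by essential surjectivity of $\u$) and set $\sigma_\alpha(a):=\xi\inv\cdot\eta_\alpha(k)\cdot\xi\in Z(\Gamma_a)$. Independence of $(k,\xi)$ and the $\sf G$-invariance condition \eqref{E defeq-mcK(G)} both reduce to naturality of $\alpha$ against arrows of the form $(y,(k,1,k),z)$ in $(\K\times_\G\K)^1$, combined with full-faithfulness of $\u$ and $\psi$. The inverse map sends $\sigma\in\mc K(\sf G)$ to the natural transformation $\alpha_\sigma(k,x,k'):=\sigma(u^0(k))\cdot u^1((\psi^1)\inv(x))$; its naturality follows directly from invariance of $\sigma$ together with the defining relation $\psi^1(y)\cdot x'=x\cdot\psi^1(z)$ for arrows in $(\K\times_\G\K)^1$. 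A short calculation using \eqref{E alpha-odot-beta} then shows $\Theta_{\sf (\psi,K,u)}$ intertwines $\bullet$ with the group law of $\mc K(\sf G)$, and coherence of the family $\{\Theta_{\sf (\psi,K,u)}\}$ along arrows in $\sf Aut(G)$ identifies all isotropy groups canonically with $\mc K(\sf G)$, realising $\sf Aut(G)$ as a $\mc K(\sf G)$-gerbe over $|\sf Aut(G)|$.

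\textbf{Main obstacle.} The hard step is the preliminary one: extracting full-faithfulness and essential surjectivity of the strict morphism $\u$ from the purely bicategorical data of Definition \ref{D automorphisms}. Once that is in hand, the remainder is naturality bookkeeping around the fiber product $\K\times_\G\K$ and a direct centre-of-isotropy computation.
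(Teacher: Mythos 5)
Your proposal follows essentially the same route as the paper: the group structure on $|{\sf Aut(G)}|$ is obtained exactly as in Lemmas \ref{L circ-on-|Mor|}--\ref{L inverse-|Aut|} (including the same explicit unit arrow built from $(\psi^1)\inv$), and your isotropy isomorphism $\Theta$ and its inverse $\sigma\mapsto\sigma\star{\sf 1_{(\psi,K,u)}}$ coincide with the maps $\Phi$ and $\Psi$ of Proposition \ref{P isotropy-of-general-auto-u}, with the gerbe statement then following as in Corollary \ref{C aut-is-gerbe}. The only (harmless) deviation is at your ``main obstacle'': you claim full faithfulness and essential surjectivity of $\u$, whereas the paper's Lemma \ref{L aut-u-|u|-surj} extracts --- and the argument only actually needs --- surjectivity of $u^1$ on hom-sets and of $|\u|$, obtained by applying Lemmas \ref{L f-equiv=g-equiv} and \ref{L G*/HK-equiv G} to the defining 2-arrows of Definition \ref{D automorphisms}.
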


The proof of this theorem consists of \S \ref{Subs isotropy-aut} (See Corollary \ref{C aut-is-gerbe}) and \S \ref{Subs group-|aut|}.

\subsection{Group action on trivial center topological groupoids}

Motived by  Theorem \ref{T |aut|-is-group} we may consider group actions on topological groupoids.

\begin{defn}\label{D aut-group}
The {\em automorphism group} $\mathrm{Aut}(\sf G)$ of $\sf G$ is defined to be $|\sf Aut(G)|$.
\end{defn}

\begin{exa}[Automorphism groupoid of classifying groupoid]
Consider the classifying groupoid $[\bullet/G]:=(G\rrto \bullet)$ of a group $G$. The automorphism groupoid is equivalent to the action groupoid $G\ltimes \mathrm{Aut}(G)$, where $\mathrm{Aut}(G)$ is the group of automorphisms of $G$ and $G$ acts on it by conjugation. This is a  $Z(G)$-gerbe over the coarse space
\[
\mathrm{Aut}(G)/(G/Z(G)) =\mathrm{Aut}(G)/\mathrm{Inn}(G)=\mathrm{Out}(G),
\]
where $\mathrm{Inn}(G)$ and $\mathrm{Out}(G)$ are the group of inner and outer automorphisms of $G$.
\end{exa}

Now suppose that $\G$ is a groupoid with trivial $\mc K(\G)$. Then the automorphism groupoid $\sf Aut(G)$ is equivalent to the group $|\sf Aut(G)|$. This observation leads to the following definition.

\begin{defn}\label{D group-action}
Let $K$ be a group and $\G$ be a groupoid with trivial $\mc K(\G)$. A {\em $K$-action} on $\sf G$ is a morphism
\[
(\psi,\msf H,\Phi): K\times \sf G \rhpu G
\]
satisfying the following two conditions:
\begin{enumerate}
\item[(1)] For every $k\in K$, the composition $\{k\}\times\G\stackrel{i_k}{\hrto} K\times\G\stackrel{(\psi,\H,\Phi)}{\rhpu} \G$ induces an automorphism of $\G$, where $i_k$ is the natural embedding. This defines a map $\tilde\Phi: K\rto \mathrm{Aut}^0(\G)$.

\item[(2)] $|\tilde\Phi|: K\rto |\mathrm{Aut}(\sf G)|$ is a group homomorphism.
\end{enumerate}
\end{defn}

\subsection{Isotropy groups of automorphisms}\label{Subs isotropy-aut}

\begin{prop}
\label{P isotropy-of-general-auto-u}
For a $\sf (\psi,K,u): G\rhpu G$ in $\mathrm{Aut}^0(\sf G)$, there is a group isomorphism
\begin{align}\label{E def-Psi}
\Psi:\mc K(\G) \xrightarrow{\cong}\Gamma_{\sf (\psi,K,u)}, \qq \sigma\mapsto \sf \sigma\star 1_{(\psi,K,u)},
\end{align}
where $\sigma\star \sf 1_{(\psi,K,u)}$ is defined by \eqref{E def-sigma-1u} in the proof. Hence $\Gamma_{\sf (\psi,K,u)}$ is canonically isomorphic to $\mc K(\sf G)$.
\end{prop}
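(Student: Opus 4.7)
The plan is to define $\Psi(\sigma)=\sigma\star{\sf 1}_{(\psi,\K,\u)}$ as the natural transformation $\u\circ\pi_1\Rightarrow\u\circ\pi_2:\K\times_\G\K\to\G$ prescribed on objects by
\begin{equation}\label{E def-sigma-1u}
(\sigma\star{\sf 1}_{(\psi,\K,\u)})(k,x,k'):=\sigma(u^0(k))\cdot u^1((\psi^1)\inv(x))=u^1((\psi^1)\inv(x))\cdot\sigma(u^0(k')),
\end{equation}
the second equality being the $\G$-invariance \eqref{E defeq-mcK(G)} of $\sigma$ applied to $u^1((\psi^1)\inv(x)):u^0(k)\to u^0(k')$. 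To verify this is indeed a natural transformation, I would take an arrow $(y,z):(k,x,k')\to(\tilde k,\tilde x,\tilde k')$ in $(\K\times_\G\K)^1$; the compatibility $\psi^1(y)\cdot\tilde x=x\cdot\psi^1(z)$ translates via \eqref{E f-inv-is-homo} into $y\cdot(\psi^1)\inv(\tilde x)=(\psi^1)\inv(x)\cdot z$, and combined with the centrality of $\sigma$ (used to slide $\sigma(u^0(\tilde k))$ past $u^1(y)$) this reduces the naturality square to a direct calculation.

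That $\Psi$ is a group homomorphism I would check by unpacking \eqref{E alpha-odot-beta}: for a splitting $x=x_1\cdot x_2$ with intermediate $k'\in K^0$, the $\G$-invariance of $\sigma_2$ slides $\sigma_2(u^0(k'))$ past $u^1((\psi^1)\inv(x_1))$, giving
$$(\Psi(\sigma_1)\bullet\Psi(\sigma_2))(k,x,k'')=\sigma_1(u^0(k))\cdot\sigma_2(u^0(k))\cdot u^1((\psi^1)\inv(x))=\Psi(\sigma_1\sigma_2)(k,x,k'').$$

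The key preliminary for bijectivity is that for $(\psi,\K,\u)\in\mathrm{Aut}^0(\G)$ the strict morphism $\u$ is itself an equivalence of groupoids: the $2$-isomorphism $(\psi,\K,\u)\circ(\phi,\L,\v)\to{\sf 1}_\G$ together with Lemma \ref{L f-equiv=g-equiv} forces $\u\circ\pi_2:\L\times_\G\K\to\G$ to be an equivalence, and since $\pi_2:\L\times_\G\K\to\K$ is an equivalence by Lemma \ref{L G*/HK-equiv G}, one extracts both essential surjectivity of $u^0$ and bijectivity of $u^1:\Gamma_k\to\Gamma_{u^0(k)}$ for every $k\in K^0$. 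Injectivity of $\Psi$ is then immediate, since $\Psi(\sigma_1)=\Psi(\sigma_2)$ forces $\sigma_1=\sigma_2$ on the image of $u^0$, and $\G$-invariance propagates this equality to all of $G^0$.

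Surjectivity is the main obstacle. Given $\alpha\in\Gamma_{(\psi,\K,\u)}$ I would first set $\sigma(u^0(k)):=\alpha(k,1_{\psi^0(k)},k)$ and observe that this lies in $Z(\Gamma_{u^0(k)})$: any arrow $(y_1,y_2):(k,1,k)\to(k,1,k)$ in $(\K\times_\G\K)^1$ satisfies $\psi^1(y_1)=\psi^1(y_2)$, hence $y_1=y_2\in\Gamma_k$ by faithfulness of $\psi$, and naturality of $\alpha$ then makes $\alpha(k,1,k)$ commute with $u^1(\Gamma_k)=\Gamma_{u^0(k)}$. For arbitrary $a\in G^0$ I pick $k\in K^0$ and $y:u^0(k)\to a$ in $G^1$, and define $\sigma(a):=y\inv\cdot\alpha(k,1,k)\cdot y$. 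The crux is independence from $(k,y)$: given a second choice $(k',y')$, I would let $w:=y\cdot(y')\inv\in G^1(u^0(k),u^0(k'))$, take $\tilde w\in K^1(k,k')$ to be the unique $u^1$-preimage of $w$, and apply naturality of $\alpha$ to the arrow $(\tilde w,\tilde w):(k,1,k)\to(k',1,k')$ to obtain $w\cdot\alpha(k',1,k')=\alpha(k,1,k)\cdot w$, from which the two expressions for $\sigma(a)$ coincide. The resulting $\sigma$ is $\G$-invariant by construction, and $\Psi(\sigma)=\alpha$ follows from naturality of $\alpha$ on the arrow $(1_k,(\psi^1)\inv(x)):(k,1,k)\to(k,x,k')$ in $(\K\times_\G\K)^1$.
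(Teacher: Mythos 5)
Your proposal is, in substance, the paper's own proof: the same formula for $\Psi(\sigma)$, the same naturality and homomorphism computations, and your construction of a preimage section from $\alpha$ (value $\alpha(k,1_{\psi^0(k)},k)$ on $\mathrm{Im}\,u^0$, centrality via naturality applied to arrows of the form $(y,y)$, extension to all of $G^0$ by conjugation along an arrow, well-definedness via a $u^1$-preimage of $y\cdot(y')\inv$) is exactly the paper's inverse map $\Phi$. Phrasing bijectivity as injectivity plus surjectivity rather than exhibiting $\Phi$ as a two-sided inverse is a cosmetic difference.

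The one weak point is your inline derivation of the preliminary surjectivity facts, which the paper isolates as Lemma \ref{L aut-u-|u|-surj} and simply cites. Your appeal to Lemma \ref{L G*/HK-equiv G} to conclude that $\pi_2:\L\times_\G\K\rto\K$ is an equivalence does not work at that stage: that lemma makes $\pi_2$ an equivalence only when the \emph{other} leg $\v:\L\rto\G$ is an equivalence, which is precisely what is not yet known (only $\psi$ and $\phi$ are equivalences a priori). What Lemmas \ref{L f-equiv=g-equiv} and \ref{L G*/HK-equiv G} give directly is that $\u\circ\pi_2\circ\pi_1$ on $(\L\times_\G\K)\times_\G\G$, hence $\u\circ\pi_2$ on $\L\times_\G\K$, is an equivalence; to push this down to surjectivity of $u^1:K^1(k,k')\rto G^1(u^0(k),u^0(k'))$ for \emph{all} $k,k'$ you also need $\pi_2$ to be essentially surjective, which requires the second $2$-arrow $\beta$ (it yields $|\v|$ surjective). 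Relatedly, this route only justifies surjectivity, not bijectivity, of $u^1$ on hom-sets --- and surjectivity is all your argument actually uses, so ``the unique $u^1$-preimage'' of $w$ should read ``some $u^1$-preimage''; any choice makes the naturality square work. With those repairs (or simply by invoking Lemma \ref{L aut-u-|u|-surj} as the paper does) the proof is complete.
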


First we find that automorphisms of $\sf G$ have the following nice properties.
\begin{lem}\label{L aut-u-|u|-surj}
Suppose ${\sf (\psi,K,u),(\phi,L,v)}\in\mathrm{Aut}^0(\G)$, and
\[
\sf (\psi,K,u)\circ (\phi,L,v)\xrightarrow{\alpha} 1_G,\qq
\sf (\phi,L,v)\circ (\psi,K,u)\xrightarrow{\beta} 1_G.
\]
Then the strict morphisms
\begin{align*}
\u\circ \pi_2\circ\pi_1&:{\sf (L\times_GK)\times_GG\rto G}, \qq\mbox{and}\qq \v\circ \pi_2\circ\pi_1:\sf (K\times_G L)\times_GG \rto G
\end{align*}
are both equivalences. Consequently
\[
\msf u^1: K^1(k_1,k_2)\rto G^1(u^0(k_1),u^0(k_2)),
\qq\mbox{and}\qq
|\msf u|: |\sf K|\rto |\sf G|
\]
are both surjective. Same properties hold for $\sf v$.
\end{lem}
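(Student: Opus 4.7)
The plan is to unpack the arrows $\alpha$ and $\beta$ via Definitions~\ref{D copose-morphism} and~\ref{D arrow}, use the equivalence lemmas of Section~\ref{Subs fib-prdct} to show that the two displayed strict morphisms are equivalences, and then extract the surjectivity consequences.

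For the first assertion, Definition~\ref{D copose-morphism} expresses the composite as $\sf(\psi,K,u)\circ(\phi,L,v)=(\phi\circ\pi_1,\L\times_\G\K,\u\circ\pi_2)$, with $\pi_1,\pi_2$ the projections out of $\L\times_\G\K$. Definition~\ref{D arrow} then says that the arrow $\alpha$ into $\sf 1_G=(id_G,G,id_G)$ is a natural transformation from $\u\circ\pi_2\circ\pi_1$ to $\mathrm{id}_\G\circ\pi_2=\pi_2$ on the outer fiber product $(\L\times_\G\K)\times_\G\G$, where I now use $\pi_1,\pi_2$ for the outer projections (matching the notation of the statement). Since $\psi$ is an equivalence, Lemma~\ref{L G*/HK-equiv G} makes the inner $\pi_1:\L\times_\G\K\rto\L$ an equivalence, so by Lemma~\ref{L eqs-compose-eq} the anchor $\phi\circ\pi_1:\L\times_\G\K\rto\G$ is an equivalence; a second application of Lemma~\ref{L G*/HK-equiv G} then makes the outer $\pi_2:(\L\times_\G\K)\times_\G\G\rto\G$ an equivalence, and Lemma~\ref{L f-equiv=g-equiv} applied to $\alpha$ transports this equivalence property to $\u\circ\pi_2\circ\pi_1$. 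The analogous argument with $\beta$, swapping the roles of the two morphisms, yields the equivalence of $\v\circ\pi_2\circ\pi_1$.

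For the surjectivity consequences, apply $|\cdot|$ to each equivalence: $|\u|\circ|\pi_2\circ\pi_1|=|\u\circ\pi_2\circ\pi_1|$ is a bijection of coarse spaces, whence $|\u|$ is surjective, and similarly $|\v|$ is surjective. For the hom-set surjectivity, fix $k_1,k_2\in K^0$. Because $|\v|$ is surjective, for each $i=1,2$ we can find $l_i\in L^0$ and $x_i\in G^1(v^0(l_i),\psi^0(k_i))$, and then take $A_i:=((l_i,x_i,k_i),1_{\phi^0(l_i)},\phi^0(l_i))\in((\L\times_\G\K)\times_\G\G)^0$ so that $(\pi_2\circ\pi_1)(A_i)=k_i$. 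Since $\u\circ\pi_2\circ\pi_1$ is an equivalence it is full, so its hom-set map at $(A_1,A_2)$ surjects onto $G^1(u^0(k_1),u^0(k_2))$; but that map sends each arrow to $u^1$ applied to its $\K$-component, which lies in $K^1(k_1,k_2)$, so $\u^1:K^1(k_1,k_2)\rto G^1(u^0(k_1),u^0(k_2))$ is surjective. The argument for $\v$ is obtained by interchanging the two morphisms. The main subtlety, and the only real obstacle beyond bookkeeping, is that the hom-set surjectivity of $\u^1$ needs both equivalences produced in the first half: the $\alpha$ side supplies the equivalence involving $\u$, while the $\beta$ side is used to lift every $k\in K^0$ into the outer fiber product.
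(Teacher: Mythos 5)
Your proof is correct; note that the paper actually states Lemma \ref{L aut-u-|u|-surj} without proof, so there is no argument to compare against, but yours is evidently the intended one: it unpacks the compositions exactly as in Definition \ref{D copose-morphism}, derives the two equivalences from Lemmas \ref{L G*/HK-equiv G}, \ref{L eqs-compose-eq} and \ref{L f-equiv=g-equiv}, and correctly isolates the one nontrivial point, namely that lifting $k_1,k_2$ into $(\L\times_\G\K)\times_\G\G$ requires the surjectivity of $|\v|$ coming from the $\beta$ side before fullness of $\u\circ\pi_2\circ\pi_1$ can be applied. No gaps.
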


Now we proceed to prove the proposition.

\begin{proof}[Proof of Proposition \ref{P isotropy-of-general-auto-u}]
First of all we define the arrow $\Psi(\sigma)=\sigma\star \sf 1_{(\psi,K,u)}\in \Gamma_{\sf (\psi,K,u)}$. Since it is an arrow from $\sf (\psi,K,u)$ to it self, it is defined over $({\sf K\times_G K})^0=\{(k_1,x,k_2)\mid x:\psi^0(k_1)\rto\psi^0(k_2)\mbox{ in } G^1\}$. It is given by
\begin{align}\label{E def-sigma-1u}
{\sf \sigma\star 1_{(\psi,K,u)}}(k_1,x,k_2)
:&=\sigma( u^0(k_1))\cdot 1_{\sf(\psi,K,u)}(k_1,x,k_2)\\
&=\sigma( u^0(k_1))\cdot u^1((\psi^1)\inv(x))
&\mbox{(Eq.\eqref{E def-1u})}.\nonumber\\
&=u^1((\psi^1)\inv(x))\cdot\sigma( u^0(k_2)),
&\mbox{(Eq.\eqref{E defeq-mcK(G)})}.\nonumber
\end{align}
where $(\psi^1)\inv(x): k_1\rto k_2$ is the unique arrow in $K^1(k_1,k_2)$ that is mapped to $x$ by $\psi^1$. We next show $\sigma\star \sf 1_{(\psi,K,u)}$ belongs to $\Gamma_{\sf {(\psi,K,u)}}$, i.e. it is a natural transformation $\u\circ\pi_1\Rto\u\circ\pi_2:\sf K\times_G K\rto G$. Take an arrow in $({\sf K\times_GK})^1$
\begin{align}\label{E arrow-in-K-Sec9.3}
(a,(k_1,x,k_2),b):(k_1,x,k_2)\rto (\tilde k_1,\tilde x,\tilde k_2).
\end{align}
Hence $\psi^1(a)\cdot \tilde x=x\cdot\psi^1(b)$ in $G^1$. Consequently $a\cdot(\psi^1)\inv(\tilde x)=(\psi^1)\inv(x)\cdot b$ in $K^1$ and $u^1(a):u^0(k_1)\rto u^0(\tilde k_1)$ in $G^1$. Then we have
\begin{align*}
\sigma\star {\sf 1_{(\psi,K,u)}}(k_1,x,k_2)\cdot  u^1(b)
&=\sigma( u^0(k_1))\cdot  u^1((\psi^1)\inv(x))\cdot  u^1(b)\\
&=\sigma( u^0(k_1))\cdot  u^1((\psi^1)\inv(x)\cdot b)\\
&=\sigma( u^0(k_1))\cdot  u^1(a\cdot (\psi^1)\inv(\tilde x))\\
&=\sigma( u^0(k_1))\cdot  u^1(a)\cdot u^1((\psi^1)\inv(\tilde x))\\
&= u^1(a)\cdot\sigma( u^0(\tilde k_1))\cdot
 u^1((\psi^1)\inv(\tilde x)) &\mbox{(Eq.\eqref{E defeq-mcK(G)})}\\
&= u^1(a)\cdot \sigma\star {\sf 1_{(\psi,K,u)}}
(\tilde k_1,\tilde x,\tilde k_2).
\end{align*}
Therefore $\sigma\star \sf 1_{(\psi,K,u)}\in \Gamma_{\sf {(\psi,K,u)}}$, and hence $\Psi$ is well defined. We next show that $\Psi$ is a group homomorphism.

For two sections $\sigma,\delta\in\mc K(\sf G)$ we have \begin{align*}
&(\sigma\star \msf 1_{\sf {(\psi,K,u)}})\bullet
(\delta\star \msf 1_{\sf {(\psi,K,u)}})(k_1,x,k_2)\\
&=\sigma\star \msf 1_{\sf {(\psi,K,u)}}(k_1,x_1,k_2')\cdot
  \delta\star \msf 1_{\sf {(\psi,K,u)}}(k_2',x_2,k_2)\\
&=\sigma( u^0(k_1))\cdot  u^1((\psi^1)\inv(x_1))
\cdot  u^1((\psi^1)\inv(x_2))\cdot\delta( u^0(k_2))\\
&=\sigma( u^0(k_1))\cdot
 u^1((\psi^1)\inv(x_1)\cdot (\psi^1)\inv(x_2))\cdot\delta( u^0(k_2))\\
&=\sigma( u^0(k_1))\cdot
 u^1((\psi^1)\inv(x_1\cdot x_2))\cdot\delta( u^0(k_2))\\
&=\sigma( u^0(k_1))\cdot
 u^1((\psi^1)\inv(x))\cdot\delta( u^0(k_2))\\
&=\sigma( u^0(k_1))
\cdot\delta( u^0(k_1)) \cdot u^1((\psi^1)\inv(x))\\
&=(\sigma\cdot\delta)(u^0(k_1)) \cdot u^1((\psi^1)\inv(x))\\
&=(\sigma\cdot\delta)\star \msf 1_{\sf {(\psi,K,u)}}(k_1,x,k_2),
\end{align*}
where $x=x_1\cdot x_2$. Hence  $\Psi(\sigma)\bullet\Psi(\delta)=\Psi(\sigma\cdot\delta)$, and $\Psi$ is a group homomorphism.

We next construct the inverse map $\Phi$ of $\Psi$. Given an arrow $\sf {(\psi,K,u)}\xrightarrow{\alpha} {(\psi,K,u)}$, by applying both $\alpha$ and $\msf 1_{\sf {(\psi,K,u)}}$ to the arrow \eqref{E arrow-in-K-Sec9.3} in $ (\sf K\times_GK)^1$, we get two commutative diagrams
\[\begin{tikzpicture}
\node at (0,0)
{$\xymatrix{
 u^0(k_1)\ar[rr]^{\alpha(k_1,x,k_2)} \ar[d]_{ u^1(a)} &&
 u^0(k_2)\ar[d]^{ u^1(b)} \\
 u^0(\tilde k_1)\ar[rr]^{\alpha(\tilde k_1,\tilde x,\tilde k_2)} &&
 u^0(\tilde k_2),
}$};
\node at (3,0) {$\mbox{and}$};
\node at (6,0)
{$\xymatrix{
 u^0(k_1)
\ar[rr]^{\msf 1_{\sf {(\psi,K,u)}}(k_1,x,k_2)}
\ar[d]_{ u^1(a)}
&&
 u^0(k_2)
\ar[d]^{ u^1(b)}\\
 u^0(\tilde k_1)
\ar[rr]^{\msf 1_{\sf {(\psi,K,u)}}(\tilde k_1,\tilde x,\tilde k_2)}
&&
 u^0(\tilde k_2).}$};
\end{tikzpicture}\]
Consequently we have
\[
\alpha(k_1,x,k_2)\cdot \msf 1_{\sf {(\psi,K,u)}}(k_1,x,k_2)\inv\cdot  u^1(a)=
 u^1(a)\cdot \alpha(\tilde k_1,\tilde x,\tilde k_2)
\cdot \msf 1_{\sf {(\psi,K,u)}}(\tilde k_1,\tilde x,\tilde k_2)\inv.
\]
By Theorem \ref{L aut-u-|u|-surj}, the map $u^1:K^1(k_1,\tilde k_1)\rto G^1( u^0(k_1), u^0(\tilde k_1))$ is surjective. Therefore
\[
\alpha(k_1,x,k_2)\cdot \msf 1_{\sf {(\psi,K,u)}}(k_1,x,k_2)\inv\cdot y=
y\cdot \alpha(\tilde k_1,\tilde x,\tilde k_2)
\cdot \msf 1_{\sf {(\psi,K,u)}}(\tilde k_1,\tilde x,\tilde k_2)\inv
\]
for every $y\in G^1$ with $s(y)= u^0(k_1)$. In particular when $k_1=k_2=k$, $x=1_{\psi^0(k)}$, $\tilde k_1=\tilde k_2=\tilde k$, $\tilde x=1_{\psi^0(\tilde k)}$, for every $y: u^0(k)\rto  u^0(\tilde k)$ we have
\begin{align}\label{E beta-comute-x-in-lemma-Gamma-u}
\alpha(k,1_{\psi^0(k)},k)\cdot y
=y\cdot \alpha(\tilde k,1_{\psi^0(\tilde k)},\tilde k).
\end{align}
Therefore by taking $k=\tilde k$ we see that for every $k\in  K^0$, $\alpha(k,1_{\psi^0(k)},k)\in Z(\Gamma_{ u^0(k)})$.

For every $a\in \mbox{Im}\,( u^0)$ take a pre-image $k\in  K^0$ of $a$ under $u^0$. We first define $\Phi(\alpha)$ on $\mbox{Im}\, u^0$ by
\begin{align}\label{E def-Phi}
\Phi(\alpha)(a):=\alpha(k,1_{\psi^0(k)},k)\in Z(\Gamma_a).
\end{align}
This is independent on the choices of $k$. Suppose there is another $k'\in K^0$ satisfying $u^0(k')=a$. Then since $u^1:K^1(k,k')\rto G^1(a,a)$ is surjective, there is an arrow $x:k\rto k'$ in $K^1$ satisfying $u^1(x)=1_a$, which gives us an arrow
\[
(x,(k,1_{\psi^0(k)},k),x):(k,1_{\psi^0(k)},k)\rto (k',1_{\psi^0(k')},k')
\]
in $({\sf K\times_G K})^1$. By applying $\alpha$ to this arrow we get
\[
\alpha(k,1_{\psi^0(k)},k)\cdot u^1(x)=
u^1(x)\cdot \alpha(k',1_{\psi^0(k')},k'),
\]
i.e. $\alpha(k,1_{\psi^0(k)},k)=\alpha(k',1_{\psi^0(k')},k')$.

We next extend $\Phi(\alpha)$ to the whole $G^0$. Since by Lemma \ref{L aut-u-|u|-surj} $|\sf u|:|K|\rto |G|$ is surjective, every object $b\in  G^0$ is connected to an object $a= u^0(k)\in \mbox{Im}\,u^0$ by an arrow $x:a\rto b$. We then extend $\Phi(\alpha)$ to the whole $G^0$ by
\[
\Phi(\alpha)(b):=x\inv\cdot\Phi(\alpha)(a)\cdot x.
\]
One can see that this is similar to the construction of $ i^1$ in the proof of Theorem \ref{T equi-i}. It is also direct to check that the definition of $\Phi(\alpha)$ does not depend on various choices and it is indeed a section in $\mc K(\sf G)$. It is direct to see that $\Phi$ is the inverse map of $\Psi$. This finishes the proof.
\end{proof}

This proposition implies that all objects of the groupoid $\sf Aut(G)$ have isomorphic isotropy groups. Consider the following subset of $\mathrm{Aut}^1(\sf G)$
\[
\ker {\sf Aut(G)}:=\{\alpha\in \mathrm{Aut}^1(\msf G)\mid s(\alpha)=t(\alpha)\}.
\]
Then source and target maps restrict to a projection $s=t:\ker{\sf Aut(G)}\rto \mathrm{Aut}^0(\sf G)$. From the proof of Proposition \ref{P isotropy-of-general-auto-u} we see that the fiber of $\ker{\sf Aut(G)}\rto \mathrm{Aut}^0(\sf G)$ is a group isomorphic to $\mc K(\sf G)$. On the other hand, from this subset we could construct a new groupoid $(\widetilde{\mathrm{Aut}}^1(\msf G)\rrto \mathrm{Aut}^0(\sf G))$ with
\[
\widetilde{\mathrm{Aut}}^1(\msf G):={\mathrm{Aut}}^1(\msf G)/\ker{\sf Aut(G)},
\]
where the quotient is taken by identifying an arrow $\alpha\in \mathrm{Aut}^1(\msf G)$ with an arrow $\beta\bullet \alpha$ for $\beta\in \ker{\sf Aut(G)}$ satisfying $s(\alpha)=s(\beta)$. Denote the corresponding projection to quotient set by $p^1:{\mathrm{Aut}}^1(\msf G)\rto \widetilde{\mathrm{Aut}}^1(\msf G)$. Then we have a strict morphism
\begin{align}\label{E (p1,id)-to-tilde-aut}
(p^1,id_{\mathrm{Aut}^0(\sf G)}): {\sf Aut(G)}\rto (\widetilde{\mathrm{Aut}}^1(\msf G)\rrto \mathrm{Aut}^0(\sf G)),
\end{align}
which is surjective over arrows and the kernel of $p^1$ is $(p^1)\inv(u(\mathrm{Aut}^0({\sf G})))=\ker {\sf Aut(G)}$. Note that $p^1|_{\ker {\sf Aut(G)}}=s=t$, hence $p^1:\ker p^1\rto \mathrm{Aut}^0(\sf G)$ is a projection with fiber isomorphic to $\mc K(\sf G)$. Therefore the groupoid $\sf Aut(G)$ is a $\mc K(\G)$-gerbe\footnote{Here by a $\mc K(\sf G)$-gerbe we mean a set level gerbe. It consists of a strict morphism $p=(p^0,p^1):\sf G\rto H$ of groupoids such that object sets $G^0=H^0$, maps $p^0=id_{G^0}$ and the fibers of the kernel $\ker p^1\rto G^0$ is isomorphic to $\mc K(\sf G)$ as groups.} over $(\widetilde{\mathrm{Aut}}^1(\msf G)\rrto \mathrm{Aut}^0(\sf G))$. By Proposition \ref{P isotropy-of-general-auto-u} we see that $(\widetilde{\mathrm{Aut}}^1(\msf G)\rrto \mathrm{Aut}^0(\sf G))$ is equivalent to the trivial groupoid $(|\sf Aut(G)|\rrto |\sf Aut(G)|)$ which represent the space $|\sf Aut(G)|$.  Therefore
\begin{cor}\label{C aut-is-gerbe}
The groupoid $\sf Aut(G)$ is a $\mc K(\G)$-gerbe over its coarse space $|\sf Aut(G)|$.
\end{cor}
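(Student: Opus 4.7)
The plan is to formalize the construction sketched in the paragraph preceding the statement. The argument has two parts: first, exhibit $\sf Aut(G)$ as a $\mc K(\G)$-gerbe over an auxiliary quotient groupoid $(\widetilde{\mathrm{Aut}}^1(\G) \rrto \mathrm{Aut}^0(\G))$; second, identify this quotient with the trivial groupoid over the coarse space $|\sf Aut(G)|$.

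For the first part, I would begin by invoking Proposition \ref{P isotropy-of-general-auto-u} to conclude that the projection $s=t:\ker\sf Aut(G) \rto \mathrm{Aut}^0(\G)$ has every fiber canonically isomorphic to the group $\mc K(\G)$. Next, I would verify that the equivalence relation $\alpha \sim \beta \bullet \alpha$ (for $\beta \in \ker\sf Aut(G)$ with $s(\beta) = s(\alpha)$) is compatible with the vertical composition $\bullet$, so that $\widetilde{\mathrm{Aut}}^1(\G)$ inherits a groupoid structure and $(p^1, id_{\mathrm{Aut}^0(\sf G)})$ becomes a genuine strict morphism as in \eqref{E (p1,id)-to-tilde-aut}. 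Since $\ker p^1 = \ker\sf Aut(G)$ by construction and each fiber is $\mc K(\G)$, the definition of $\mc K(\G)$-gerbe given in the footnote is satisfied.

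For the second part, quotienting by $\ker\sf Aut(G)$ trivializes the isotropy of the new groupoid: every arrow in $\widetilde{\mathrm{Aut}}^1(\G)$ with equal source and target is identified with a unit. At the same time, the object-level equivalence relation on $\mathrm{Aut}^0(\G)$ induced by arrows is preserved under the quotient, so the coarse space of the quotient groupoid still equals $|\sf Aut(G)|$. A groupoid with trivial isotropy is equivalent (as a groupoid) to the trivial groupoid on its coarse space, and composing gives the conclusion that $\sf Aut(G)$ is a $\mc K(\G)$-gerbe over $|\sf Aut(G)|$.

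The main subtlety will be verifying the well-definedness of composition on $\widetilde{\mathrm{Aut}}^1(\G)$: for $\beta \in \ker\sf Aut(G)$ with $s(\beta) = t(\alpha)$, one needs a $\beta' \in \ker\sf Aut(G)$ satisfying $s(\beta') = s(\alpha)$ and $\alpha \bullet \beta = \beta' \bullet \alpha$, and symmetrically on the other side. Under the canonical isomorphism $\Gamma_{(\psi,\K,\u)} \cong \mc K(\G)$ of Proposition \ref{P isotropy-of-general-auto-u}, both $\beta$ and $\beta'$ correspond to the same section of $\mc K(\G)$, and the identity follows from the centrality relation \eqref{E defeq-mcK(G)} together with the explicit formula \eqref{E def-sigma-1u} used to build $\sigma \star \msf 1_{\sf (\psi,K,u)}$. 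Once this compatibility is in hand, the remainder of the proof is a direct unpacking of definitions.
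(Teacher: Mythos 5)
Your proposal follows essentially the same route as the paper: factor the claim through the quotient groupoid $(\widetilde{\mathrm{Aut}}^1(\G)\rrto\mathrm{Aut}^0(\G))$, use Proposition \ref{P isotropy-of-general-auto-u} to identify the fibers of $\ker{\sf Aut(G)}\rto\mathrm{Aut}^0(\G)$ with $\mc K(\G)$, and then observe that the quotient has trivial isotropy and is therefore equivalent to the trivial groupoid on $|\sf Aut(G)|$. The only difference is that you explicitly verify the two-sided compatibility $\alpha\bullet\beta=\beta'\bullet\alpha$ needed for the quotient composition to be well defined (which does follow from \eqref{E def-sigma-1u} and \eqref{E defeq-mcK(G)} as you say), a point the paper leaves implicit; this is a welcome addition but not a different argument.
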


\subsection{Group structure over $|\sf Aut(\G)|$}
\label{Subs group-|aut|}

In this section we show that the coarse space $|\sf Aut(G)|$ of the automorphism groupoid of $\G$ is a group. The proof consists of the following five lemmas.

\begin{lem}[Multiplication] \label{L circ-on-|Mor|}
The composition $\circ$ induces a multiplication over the coarse space $|\sf Mor(G,G)|$.
\end{lem}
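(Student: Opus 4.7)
The plan is to show that the composition functor descends from $\sf Mor(G,G)\times Mor(G,G)$ to its coarse space. Concretely, I need to verify that whenever $(\psi_1,\K_1,\u_1)$ and $(\psi_1',\K_1',\u_1')$ represent the same class in $|\sf Mor(G,G)|$, and similarly $(\psi_2,\K_2,\u_2)$ and $(\psi_2',\K_2',\u_2')$ represent the same class, the two compositions represent the same class in $|\sf Mor(G,G)|$.

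First I would invoke Lemma \ref{L hori-functor}, which already establishes that $\circ$ is a strict morphism of groupoids from $\sf Mor(G,G)\times Mor(G,G)$ to $\sf Mor(G,G)$. As noted after Definition \ref{D natural-trans}, every strict morphism $\f:\G\rto\H$ descends to a map $|\f|:|\G|\rto|\H|$ on coarse spaces. Applying this to $\circ$ yields a well-defined map $|\sf Mor(G,G)\times Mor(G,G)|\rto|\sf Mor(G,G)|$. To complete the argument one identifies $|\sf Mor(G,G)\times Mor(G,G)|$ canonically with $|\sf Mor(G,G)|\times |\sf Mor(G,G)|$; this is immediate since the arrows in a product groupoid are componentwise pairs, so the induced equivalence relation on objects factors through each component.

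Unpacked without appealing to this formal abstraction, the argument says: given arrows $\alpha:(\psi_1,\K_1,\u_1)\rto(\psi_1',\K_1',\u_1')$ and $\beta:(\psi_2,\K_2,\u_2)\rto(\psi_2',\K_2',\u_2')$ in $\mathrm{Mor}^1(\sf G,G)$, Construction \ref{C hori-compose} produces an explicit arrow $\beta\circ\alpha$ connecting $(\psi_2,\K_2,\u_2)\circ(\psi_1,\K_1,\u_1)$ with $(\psi_2',\K_2',\u_2')\circ(\psi_1',\K_1',\u_1')$, so the two compositions lie in the same orbit under $\mathrm{Mor}^1(\sf G,G)$. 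No substantive obstacle arises: all of the nontrivial work was already absorbed into Construction \ref{C hori-compose} and Lemma \ref{L hori-functor}, and the present lemma is a formal consequence of those statements.
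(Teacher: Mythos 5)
Your proof is correct and follows essentially the same route as the paper: both arguments reduce the claim to the fact that Construction \ref{C hori-compose} produces an explicit arrow between the two composed morphisms, so the composition descends to coarse classes. The only cosmetic difference is that you vary both factors simultaneously via $\beta\circ\alpha$, whereas the paper varies one factor at a time by horizontally composing a single arrow $\alpha$ with the unit arrow $\sf 1_{(\phi,L,v)}$ of the other factor.
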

\begin{proof}
Let $\sf (\psi,K,u),(\psi',K',u'), (\phi,L,v)\in\mathrm{Mor}^0(G,G)$ and $\sf (\psi,K,u)\xrightarrow{\alpha} (\psi',K',u')$ be an arrow. Then we have two arrows (see Construction \ref{C hori-compose})
\begin{align*}
\sf (\psi,K,u)\circ(\phi,L,v)
            \xrightarrow{\alpha\circ 1_{(\phi,L,v)}} (\psi',K',u')\circ(\phi,L,v),\qq
\sf (\phi,L,v)\circ(\psi,K,u)
      \xrightarrow{1_{(\phi,L,v)}\circ\alpha} (\phi,L,v)\circ(\psi',K',u').
\end{align*}
Therefore we get a well-defined multiplication  $|\sf (\psi,K,u)|\circ|(\phi,L,v)|=|\sf (\psi,K,u)\circ(\phi,L,v)|$ on $|\sf Mor(G,G)|$. This finishes the proof.
\end{proof}

\begin{lem}[Associativity]\label{L Asso-|Aut|}
The induced multiplication over $|\sf Mor(\G,\H)|$ is associative.
\end{lem}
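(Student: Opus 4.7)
The operation on $|\sf Mor(G,G)|$ is defined by Lemma \ref{L circ-on-|Mor|} via $|\mathsf{a}|\cdot|\mathsf{b}|=|\mathsf{b}\circ\mathsf{a}|$. To prove associativity, given three morphisms $\mathsf{a}_i=\sf(\psi_i,K_i,u_i)\in\mathrm{Mor}^0(G,G)$ for $i=1,2,3$, the plan is to exhibit an arrow in $\mathrm{Mor}^1(\G,\G)$ connecting the two triple compositions $(\mathsf{a}_3\circ\mathsf{a}_2)\circ\mathsf{a}_1$ and $\mathsf{a}_3\circ(\mathsf{a}_2\circ\mathsf{a}_1)$, which forces them to lie in the same equivalence class in $|\sf Mor(G,G)|$.

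First, unfolding via Definition \ref{D copose-morphism}, the two triple compositions are
\[
(\mathsf{a}_3\circ\mathsf{a}_2)\circ\mathsf{a}_1 = \bigl(\psi_1\circ\pi_1,\ \K_1\times_\G(\K_2\times_\G\K_3),\ \u_3\circ\pi_2\circ\pi_2\bigr),
\]
\[
\mathsf{a}_3\circ(\mathsf{a}_2\circ\mathsf{a}_1) = \bigl((\psi_1\circ\pi_1)\circ\pi_1,\ (\K_1\times_\G\K_2)\times_\G\K_3,\ \u_3\circ\pi_2\bigr).
\]
By Lemma \ref{L fib-prdct-asso}, there is a canonical groupoid isomorphism $\Theta\colon(\K_1\times_\G\K_2)\times_\G\K_3\xrightarrow{\cong}\K_1\times_\G(\K_2\times_\G\K_3)$ that intertwines the iterated projections to $\K_1$ and $\K_3$, and hence intertwines the two equivalences to $\G$ on the left and the two strict morphisms to $\G$ on the right.

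The heart of the argument is to convert this canonical isomorphism into an honest arrow in $\mathrm{Mor}^1(\G,\G)$. More generally, whenever $\Theta\colon\K\to\K'$ is an isomorphism of groupoids satisfying $\psi=\psi'\circ\Theta$ and $\u=\u'\circ\Theta$, I will define $\alpha_\Theta\colon(\sf K\times_G K')^0\rto G^1$ in direct analogy with the unit arrow of Lemma \ref{L unit-arrow}: for an object $(k,x,k')$ with $x:\psi^0(k)\to\psi'^0(k')=\psi^0(k)\to\psi'^0(k')$, use that $x$ lifts uniquely along the equivalence $\psi'$ to an arrow $\Theta^0(k)\to k'$ in $K'^1$, and set
\[
\alpha_\Theta(k,x,k'):=\u'^1\bigl((\psi'^1)\inv(x)\bigr).
\]
The verification that $\alpha_\Theta$ is a natural transformation $\u\circ\pi_1\Rto \u'\circ\pi_2$ is essentially the same calculation as in the proof of Lemma \ref{L unit-arrow}, with the compatibilities $\u'\circ\Theta=\u$ and $\psi'\circ\Theta=\psi$ playing the role of the identity.

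Applying this construction to the $\Theta$ produced by Lemma \ref{L fib-prdct-asso} yields the desired arrow between $(\mathsf{a}_3\circ\mathsf{a}_2)\circ\mathsf{a}_1$ and $\mathsf{a}_3\circ(\mathsf{a}_2\circ\mathsf{a}_1)$ in $\mathrm{Mor}^1(\G,\G)$, which is precisely the coherence promised by Lemma \ref{L horizontal-asso}. Passing to $|\sf Mor(G,G)|$ gives the equality of classes and hence associativity. The only mild obstacle is the naturality check for $\alpha_\Theta$, but this is a routine unpacking that mirrors the computation already carried out for the unit arrows, so no genuinely new work is required.
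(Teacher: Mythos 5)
Your proposal is correct and follows essentially the same route as the paper: both reduce associativity to the canonical isomorphism of iterated fiber products (Lemma \ref{L fib-prdct-asso}, as used in Lemma \ref{L horizontal-asso}) and then produce an arrow in $\mathrm{Mor}^1(\G,\G)$ between the two triple composites, which identifies their classes in the coarse space. The only difference is that you make explicit, via the general construction of $\alpha_\Theta$ for an isomorphism $\Theta$ intertwining the structure maps, the step the paper dismisses as ``direct to construct an arrow,'' and your formula $\alpha_\Theta(k,x,k')=\u'^1\bigl((\psi'^1)\inv(x)\bigr)$ together with the naturality check modeled on Lemma \ref{L unit-arrow} is a valid way to do it.
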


\begin{proof}
For simplicity, here we denote morphisms by a single word. The associativity means that every triple $\sf A,B,C$ of morphisms in $\mathrm{Mor}^0(\sf G)$ satisfies
\[
\sf (|A|\circ |B|)\circ |C|=|A|\circ(|B|\circ |C|).
\]
We have showed in Lemma \ref{L horizontal-asso} that via the isomorphism of fiber products of a triple of groupoids, the composition $\sf (A\circ B)\circ C$ is identified with $\sf A\circ(B\circ C)$. Then it is direct to construct an arrow $\sf (A\circ B)\circ C\rto \sf A\circ(B\circ C)$. The lemma follows.
\end{proof}

\begin{lem}[Identity]\label{L identity-|Aut|}
The identity in with respect to the multiplication over $|\sf Mor(G,G)|$ is the image of $\sf 1_G=(id_G,G,id_G)$ in $|\sf Mor(G,G)|$.
\end{lem}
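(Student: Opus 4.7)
The plan is to unravel the definitions and, for each $(\psi,\sf K,u)\in\mathrm{Mor}^0(\sf G,G)$, explicitly exhibit arrows $\sf 1_G\circ(\psi,K,u)\xrightarrow{\alpha}(\psi,K,u)$ and $\sf (\psi,K,u)\circ 1_G\xrightarrow{\beta}(\psi,K,u)$ in $\mathrm{Mor}^1(\sf G,G)$. Once these are in hand, passing to coarse spaces and invoking Lemma \ref{L circ-on-|Mor|} gives
\[
|\sf 1_G|\circ|(\psi,K,u)|=|1_G\circ(\psi,K,u)|=|(\psi,K,u)|, \qquad |(\psi,K,u)|\circ|\sf 1_G|=|(\psi,K,u)\circ 1_G|=|(\psi,K,u)|.
\]

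First I would unpack Definition~\ref{D copose-morphism} for the identity morphism. Taking $\sf (\phi,L,v)=1_G=(id_G,G,id_G)$, one gets
\[
\sf 1_G\circ(\psi,K,u)=(\psi\circ\pi_1,\,K\times_{u,G,id_G}G,\,\pi_2),
\]
whose middle groupoid has objects $(k,y,g)$ with $y:u^0(k)\rto g$ in $G^1$. Symmetrically,
\[
\sf (\psi,K,u)\circ 1_G=(\pi_1,\,G\times_{id_G,G,\psi}K,\,u\circ\pi_2).
\]

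Second, to produce $\alpha$ I would consider the fiber product $\Q:=(\sf K\times_G G)\times_{\psi\circ\pi_1,G,\psi}\sf K$, whose objects are of the form $((k,y,g),x,k')$ with $x:\psi^0(k)\rto\psi^0(k')$. By Definition~\ref{D arrow}, the required arrow is a natural transformation $\pi_2\circ P_1\Rto u\circ P_2:\Q\rto\G$ (where $P_1,P_2$ are the outer projections). I propose
\[
\alpha\bigl((k,y,g),x,k'\bigr):=y^{-1}\cdot u^1\bigl((\psi^1)^{-1}(x)\bigr),
\]
which is a legitimate arrow $g\rto u^0(k')$ in $G^1$, using that $\psi$ is an equivalence (Remark~\ref{R equiv-bij-arrows}) so $(\psi^1)^{-1}(x)$ exists. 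For $\beta$ I would analogously set
\[
\beta\bigl((g,y,k),x,k'\bigr):=u^1\bigl((\psi^1)^{-1}(y^{-1}\cdot x)\bigr).
\]

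The main obstacle, and the only real work, is the verification that $\alpha$ and $\beta$ are natural transformations. This amounts to a diagram chase: given a morphism in $Q^1$, written as a compatible quadruple of arrows, one must check that the two composites obtained by applying $\alpha$ (or $\beta$) at source and target agree. The verification is routine because $(\psi^1)^{-1}$ is a functor on the hom-sets (cf.\ \eqref{E f-inv-is-homo}) and the squares defining arrows in the iterated fiber products enforce exactly the relations needed to cancel the $y^{-1}$ terms. Once naturality is checked, the lemma follows immediately.
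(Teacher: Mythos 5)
Your proposal is correct and follows essentially the same route as the paper's proof: the paper likewise establishes the lemma by explicitly constructing a natural transformation $\alpha_{\sf (\psi,K,u),1}(k',y,g,x,k)=u^1\bigl((\psi^1)^{-1}(y^{-1}\cdot x)\bigr)$ witnessing $\sf (\psi,K,u)\circ 1_G\rto(\psi,K,u)$ (with the companion arrow for $\sf 1_G\circ(\psi,K,u)$ ``defined similarly''), and then passes to coarse classes. Your formulas agree with the paper's up to relabeling, and leaving the naturality check as a routine diagram chase matches the level of detail in the paper's own argument.
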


\begin{proof}
We construct two arrows $\sf 1_G\circ (\psi,K,u)\xrightarrow{\alpha_{1,(\psi,K,u)}} (\psi,K,u)$, and $\sf (\psi,K,u)\circ 1_G\xrightarrow{\alpha_{(\psi,K,u),1}} (\psi,K,u)$ for every automorphism $\sf (\psi,K,u)\in\mathrm{Aut}^0(G)$. The composed morphism $\sf (\psi,K,u)\circ 1_G$ is
\[
\G\xleftarrow{{\sf id_G}\circ \pi_1} \W:=\G\times_{{\sf id_G,G,\psi}}\K
\xrightarrow{\u\circ\pi_2} \H.
\]
The arrow $\sf \alpha_{(\psi,K,u),1}$ we want is a natural transformation in the diagram
\[
\begin{tikzpicture}
\def \x{3}
\def \y{0.8}
\node (A00)  at    (0,0)        {$\sf G$};
\node (A11)  at    (\x,\y)      {$\sf W$};
\node (A10)  at    (\x,0)       {$\sf L$};
\node (A1-1) at    (\x,-1*\y)   {$\sf K$};
\node (A20)  at    (2.5*\x,0)     {$\sf H$};
\node at (1.7*\x,0) {$\sf \Downarrow\alpha_{(\psi,K,u),1}$};
\path (A00) edge [<-] node [auto] {$\scriptstyle{{\sf id_G}\circ\pi_1}$} (A11);
\path(A00)edge[<-]node[auto,swap]{$\scriptstyle{\psi}$}(A1-1);
\path (A10) edge [->] node [auto] {$\scriptstyle{\pi_1}$} (A11);
\path (A10) edge [->] node [auto,swap] {$\scriptstyle{\pi_2}$} (A1-1);
\path (A11) edge [->] node [auto] {$\scriptstyle{\u\circ\pi_2}$} (A20);
\path (A1-1) edge [->] node [auto,swap] {$\scriptstyle{\sf u}$} (A20);
\end{tikzpicture}
\]
with $\L=(L^1\rrto L^0):=\W\times_{{\sf id_G}\circ \pi_1,\G,\psi} \K$. Elements in $L^0$ is of the form
\[
\xymatrix{k'&(g \ar@{.>}[l]_y^\G  \ar@{.>}[r]^x_\G &k),}
\]
with $x:g\rto \psi^0(k)$, and $y: g\rto \psi^0(k')$. We denote it by $(k',y,g,x,k)$. From this object we get $y\inv\cdot x: \psi^0(k')\rto \psi^0(k)$. Since $\psi$ is an equivalence, we get a unique arrow $(\psi^1)\inv(y\inv\cdot x):k'\rto k$. We set
\[
\alpha_{\sf(\psi,K,u),1}(k',y,g,x,k)= u^1((\psi^1)\inv(y\inv\cdot x)).
\]
Then it is direct to check that this is the arrow we want. $\alpha_{1,\sf (\psi,K,u)}$ is defined similarly.
\end{proof}

\begin{lem}[Closedness]\label{L closed-|Aut|}
$|\sf Aut(G)|$ is closed with respect to the multiplication on $|\sf Mor(G,G)|$.
\end{lem}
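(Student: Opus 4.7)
The plan is to show that if $\sf A=(\psi,\K,\u)$ and $\sf B=(\phi,\L,\v)$ both lie in $\mathrm{Aut}^0(\G)$, then $\sf A\circ B$ lies in $\mathrm{Aut}^0(\G)$ as well (which immediately gives $|A|\circ|B|=|A\circ B|\in|\sf Aut(G)|$ by Lemma \ref{L circ-on-|Mor|}). The natural candidate for an inverse of $\sf A\circ B$ is $\sf B'\circ A'$, where $\sf A', B'\in \mathrm{Aut}^0(\G)$ are chosen so that there exist arrows
\[
\sf A\circ A'\xrightarrow{\alpha_1}1_\G,\quad A'\circ A\xrightarrow{\alpha_2}1_\G, \quad B\circ B'\xrightarrow{\beta_1}1_\G,\quad B'\circ B\xrightarrow{\beta_2}1_\G
\]
in $\mathrm{Mor}^1(\G,\G)$, guaranteed by Definition \ref{D automorphisms}.

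The first main step is to build an arrow $\sf (A\circ B)\circ (B'\circ A')\rto 1_\G$ by concatenation. Using the associativity of $\circ$ up to canonical isomorphism from Lemma \ref{L horizontal-asso}, we get a canonical arrow
\[
\sf (A\circ B)\circ(B'\circ A')\rto A\circ\bigl((B\circ B')\circ A'\bigr).
\]
Horizontally composing $\beta_1$ with the identity arrows $\sf 1_A$ and $\sf 1_{A'}$ via Lemma \ref{L hori-functor} yields an arrow
\[
\sf A\circ\bigl((B\circ B')\circ A'\bigr)\xrightarrow{\sf 1_A\circ\beta_1\circ 1_{A'}} A\circ(1_\G\circ A').
\]
Next, the arrow $\alpha_{1,\sf A'}:\sf 1_\G\circ A'\rto A'$ from the proof of Lemma \ref{L identity-|Aut|}, again composed horizontally with $\sf 1_A$, produces $\sf A\circ(1_\G\circ A')\rto A\circ A'$. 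Finally $\alpha_1$ gives $\sf A\circ A'\rto 1_\G$. Vertically composing all of these arrows (which is associative and well-defined by Lemma \ref{L vert-comp-asso}) yields the desired arrow $\sf (A\circ B)\circ(B'\circ A')\rto 1_\G$.

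The second step is completely symmetric, producing $\sf (B'\circ A')\circ(A\circ B)\rto 1_\G$ via the re-bracketing $\sf (B'\circ A')\circ(A\circ B)\rto B'\circ((A'\circ A)\circ B)$, then applying $\alpha_2$ in the middle, removing the resulting $\sf 1_\G$ using Lemma \ref{L identity-|Aut|}, and finishing with $\beta_2$. This shows that $\sf A\circ B$ admits an inverse (namely $\sf B'\circ A'$) in the sense of Definition \ref{D automorphisms}, hence is an automorphism. The only potential friction is keeping track of the canonical associativity isomorphisms and the identity arrows $\sf\alpha_{(\psi,K,u),1}, \alpha_{1,(\psi,K,u)}$, but each of these has been already provided as an arrow in $\mathrm{Mor}^1(\G,\G)$, so the argument is purely formal once they are assembled in the right order.
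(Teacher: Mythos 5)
Your proposal is correct and follows essentially the same route as the paper: take the candidate inverse $\sf B'\circ A'$, re-bracket via the canonical associativity arrows of Lemma \ref{L horizontal-asso}, cancel the inner pair using the horizontal composition of $\beta_1$ (resp.\ $\alpha_2$) with identity arrows, strip the resulting $\sf 1_G$ with the arrows from Lemma \ref{L identity-|Aut|}, and finish with $\alpha_1$ (resp.\ $\beta_2$). The only (immaterial) difference is that you cancel $\sf 1_G\circ A'$ directly with $\alpha_{1,\sf A'}$, whereas the paper re-brackets once more to $\sf (A\circ 1_G)\circ A'$ and uses $\alpha_{\sf A,1}\circ 1_{\sf A'}$.
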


\begin{proof}
For simplicity, here we also denote morphisms by a single word. Suppose we have automorphisms $\sf A, B, A', B'\in \mathrm{Aut}^0(G,G)$ and arrows $\sf A\circ A'\xrightarrow{\alpha} 1_G$, $\sf A'\circ A\xrightarrow{\beta} 1_G$, $\sf B\circ B'\xrightarrow{\gamma} 1_G$, $\sf B'\circ B\xrightarrow{\delta} 1_G$. We next show $A\circ B\in\mathrm{Aut}^0(\G)$. We have arrows
\begin{align*}
&\sf (A\circ B)\circ (B'\circ A') \rto
\sf A\circ (B\circ(B'\circ A')) \rto
\sf A\circ ((B\circ B')\circ A')\\
&\sf \xrightarrow{ 1_A\circ(\gamma\circ 1_{A'})}
A\circ (1_G\circ A') \rto ( A\circ 1_G)\circ A'
\xrightarrow{\alpha_{A,1} \circ 1_{A'}}
A\circ A'\xrightarrow{\alpha} 1_G
\end{align*}
and
\begin{align*}
&\sf (B'\circ A')\circ (A\circ B) \rto
B'\circ (A'\circ(A\circ B)) \rto
B'\circ ((A'\circ A)\circ B)\\
&\sf \xrightarrow{1_{A'}\circ(\beta\circ 1_{B})}
B'\circ (1_G\circ B) \rto (B'\circ 1_G)\circ B
\xrightarrow{\alpha_{B',1}\circ 1_{B}}
B'\circ B\xrightarrow{\delta} 1_G,
\end{align*}
where unmarked arrows are obtained from the Lemma \ref{L horizontal-asso} and $\sf \alpha_{A,1}, \alpha_{B',1}$ are the arrows defined in the proof of last Lemma. Therefore $\sf A\circ B\in \mathrm{Aut}^0(G,G)$, and $|\sf Aut(G)|$ is closed under the multiplication.
\end{proof}

\begin{lem}[Inverse]\label{L inverse-|Aut|}
Every $|u|\in |\sf Aut(G)|$ has an inverse.
\end{lem}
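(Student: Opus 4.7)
The plan is to exploit the very definition of $\mathrm{Aut}^0(\G)$, which already builds a two-sided inverse into every automorphism up to an arrow, and then translate this to the coarse space using the multiplication defined in Lemma \ref{L circ-on-|Mor|} together with the identity established in Lemma \ref{L identity-|Aut|}.

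First, given $|\u| = |{\sf (\psi,K,u)}| \in |{\sf Aut}(\G)|$, I would invoke Definition \ref{D automorphisms} to extract a companion morphism $\sf (\phi,L,v) \in \mathrm{Mor}^0(G,G)$ together with arrows
\[
\sf (\psi,K,u)\circ (\phi,L,v)\xrightarrow{\alpha} 1_G,
\qquad
\sf (\phi,L,v)\circ (\psi,K,u)\xrightarrow{\beta} 1_G.
\]
The key observation is that the role of $\sf (\psi,K,u)$ and $\sf (\phi,L,v)$ in Definition \ref{D automorphisms} is completely symmetric: the same pair of arrows $(\alpha,\beta)$ witnesses that $\sf (\phi,L,v)$ is an automorphism with $\sf (\psi,K,u)$ as its partner. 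Hence $\sf (\phi,L,v) \in \mathrm{Aut}^0(\G)$, and $|{\sf (\phi,L,v)}| \in |{\sf Aut}(\G)|$.

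Next I would pass to the coarse space. By the construction in Lemma \ref{L circ-on-|Mor|},
\[
|{\sf (\psi,K,u)}| \circ |{\sf (\phi,L,v)}|
= |{\sf (\psi,K,u)}\circ {\sf (\phi,L,v)}|,
\]
and similarly for the other order. The arrows $\alpha$ and $\beta$ in $\mathrm{Mor}^1(\G,\G)$ immediately give
\[
|{\sf (\psi,K,u)}\circ {\sf (\phi,L,v)}| = |{\sf 1_G}|
\qquad\text{and}\qquad
|{\sf (\phi,L,v)}\circ {\sf (\psi,K,u)}| = |{\sf 1_G}|,
\]
because objects connected by an arrow in $\sf Mor(G,G)$ descend to the same point in the coarse space. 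Since $|{\sf 1_G}|$ is the identity element by Lemma \ref{L identity-|Aut|}, this shows that $|{\sf (\phi,L,v)}|$ is a two-sided inverse of $|{\sf (\psi,K,u)}|$.

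Combined with Lemmas \ref{L circ-on-|Mor|}, \ref{L Asso-|Aut|}, \ref{L identity-|Aut|}, and \ref{L closed-|Aut|}, this finishes the proof that $|{\sf Aut}(\G)|$ is a group. There is no real obstacle here beyond unpacking the definitions; the substantive content lies in the earlier lemmas (particularly closedness and associativity), while the inverse is essentially built into the definition of automorphism. The only point requiring slight care is confirming the symmetry of Definition \ref{D automorphisms}, but this is immediate from the form of the two arrows $\alpha$ and $\beta$.
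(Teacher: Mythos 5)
Your proposal is correct and follows essentially the same route as the paper, whose entire proof is the one-line remark that the inverse ``follows from the definition of $\mathrm{Aut}^0(\sf G)$''; you have simply unpacked that remark, correctly noting the symmetry of Definition \ref{D automorphisms} and that the arrows $\alpha,\beta$ force $|{\sf (\psi,K,u)}\circ{\sf (\phi,L,v)}|=|{\sf 1_G}|=|{\sf (\phi,L,v)}\circ{\sf (\psi,K,u)}|$ in the coarse space. No gap.
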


\begin{proof}
This follows from the definition of $\sf\mathrm{Aut}^0(G)$.
\end{proof}

Combing these five lemmas and noting that $\sf 1_G\in \mathrm{Aut^0}(G)$, we finish the proof of that $|\sf Aut(G)|$ is a group.

\appendix


\section{Topology on morphism groupoid}

In this appendix we restrict ourselves to topological groupoids and explain how to assign topology for a morphism groupoid. Since the general case is rather technically complicated and away from central topic of the paper, instead of working for $\sf Mor(G,H)$ or $\sf FMor(G,H)$, we focus ourselves on a natural sub-groupoid of $\sf FMor(G,H)$, which we denote by $\sf OFMor(G,H)$, together with certain assumption on $\sf G$. We point out that such assumption doesn't rule out the case we are interested in. In particular, this appendix explains how one can assign topology for the refinement morphism groupoid of orbifold groupoids and shows that the coarse space of the automorphism groupoid of an orbifold groupoid has a natural topological group structure.

We call a full-equivalence $\psi:\sf K\rto G$ an open refinement, if $K^0$ is a disjoint union of open subsets of $G^0$ and $\sf K$ is the pull back groupoid along the inclusion $K^0\hrto G^0$ with $\psi$ being the corresponding strict morphism of groupoids. We denote by
\[
\mathrm{OFMor}^0({\sf G,H})=\bigsqcup_{\psi:\sf K\rto G,\,\text{open refinement}}\mathrm{SMor^0}(\sf K,H)
\]
the set of open refinements of $\sf G$. By restricting ${\sf FMor(G,H)}$ over $\mathrm{OFMor}^0({\sf G,H})$, we obtain a subgroupoid
\[
{\sf OFMor(G,H)}={\sf FMor(G,H)}\big|_{\mathrm{OFMor}^0({\sf G,H})}
\]
of $\sf FMor(G,H)$. We remark that when $\sf G$ and $\sf H$ are both orbifold groupoids, $\sf OFMor(G,H)$ is Morita equivalent to the groupoid of orbifold homomorphisms (cf. \cite{Adem-Leida-Ruan2007}).

In the following we construct a natural topology over $\sf OFMor(G,H)$ by using compact-open topology. In later context, the space of continuous maps $C(X, Y)$ between two topological space $X$ and $Y$ is always assigned with the compact-open topology.

\subsection{Topology on $\sf SMor(K,H)$}

We first consider the topology over the groupoid
\[
{\sf SMor(K,H)}=(\mathrm{SMor}^1({\sf K,H})\rrto \mathrm{Smor}^0(\sf K,H))
\]
of strict morphisms from $\sf K$ to $\sf H$, where $\mathrm{SMor}^0(\sf K,H)$ is the space of all (continuous) strict morphisms from $\sf K$ to $\sf H$, and $\mathrm{SMor}^1(\sf K,H)$ is the space of all (continuous) natural transformations.

As
\[
\mathrm{SMor}^0({\sf K,H})\subseteq C(K^0,H^0)\times C(K^1,H^1).
\]
We use the induced topology over $\mathrm{SMor}^0({\sf K,H})$.

As
\[
\mathrm{SMor}^1({\sf K,H})\subseteq \mathrm{SMor}^0({\sf K,H})\times C(K^0,H^1),
\]
we use the induced topology over $\mathrm{SMor}^1({\sf K,H})$.

\begin{lem}
$\sf SMor(K,H)$ is a topological groupoid.
\end{lem}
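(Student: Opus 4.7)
The plan is to verify continuity of the five structure maps---source $s$, target $t$, unit $u$, composition $\cdot$, and inverse $i$---of the candidate groupoid $\sf SMor(K,H)$. Since $\mathrm{SMor}^0(\sf K,H) \subseteq C(K^0,H^0) \times C(K^1,H^1)$ and $\mathrm{SMor}^1(\sf K,H) \subseteq \mathrm{SMor}^0(\sf K,H) \times C(K^0,H^1)$ carry subspace topologies, each claim will reduce to a general property of the compact-open topology combined with the continuity of the structure maps of $\sf H$. I will encode a natural transformation $\alpha:\sf f\Rightarrow g$ as a triple $(\f,\g,\hat\alpha)$, where $\hat\alpha: K^0\to H^1$ assigns to $k$ the arrow $\alpha_k$.

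First I would handle source and target. In the chosen encoding, $s(\f,\g,\hat\alpha)=\f$ and $t(\f,\g,\hat\alpha)=\g$ are simply projections onto factors of $\mathrm{SMor}^0(\sf K,H)\times \mathrm{SMor}^0(\sf K,H)\times C(K^0,H^1)$, hence continuous. For the unit map $u:\f\mapsto(\f,\f,u_H\circ f^0)$, it suffices to show that $f^0\mapsto u_H\circ f^0$ is continuous as a map $C(K^0,H^0)\to C(K^0,H^1)$; this is postcomposition with the continuous unit map $u_H: H^0\to H^1$ of $\sf H$, which is always continuous in the compact-open topology. The inverse map $i:(\f,\g,\hat\alpha)\mapsto(\g,\f,i_H\circ \hat\alpha)$ is continuous by the same postcomposition principle applied to the continuous inverse $i_H: H^1\to H^1$ of $\sf H$.

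The main step is continuity of composition $\cdot:\mathrm{SMor}^1\times_{\mathrm{SMor}^0}\mathrm{SMor}^1\to \mathrm{SMor}^1$, which sends two composable natural transformations $(\f,\g,\hat\alpha)$ and $(\g,\h,\hat\beta)$ to $(\f,\h,\hat\alpha\cdot\hat\beta)$ with $(\hat\alpha\cdot\hat\beta)(k)=\hat\alpha(k)\cdot_H\hat\beta(k)$. The $\mathrm{SMor}^0$-components depend continuously on the input by the previous paragraph, so the real work is to show the assignment $(\hat\alpha,\hat\beta)\mapsto\hat\alpha\cdot\hat\beta$ is continuous on the appropriate fiber product inside $C(K^0,H^1)\times C(K^0,H^1)$. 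I would factor this map as
\[
(\hat\alpha,\hat\beta)\;\longmapsto\;\bigl(k\mapsto(\hat\alpha(k),\hat\beta(k))\bigr)\;\longmapsto\;m_H\circ(k\mapsto(\hat\alpha(k),\hat\beta(k))),
\]
where the first arrow is the pairing map into $C(K^0,H^1\times_{H^0}H^1)$ (which lands there precisely because $(\hat\alpha,\hat\beta)$ comes from the fiber product over $\mathrm{SMor}^0$), and the second arrow is postcomposition with the continuous multiplication $m_H: H^1\times_{H^0}H^1\to H^1$ of $\sf H$.

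The main obstacle is establishing continuity of the pairing map $C(K^0,H^1)\times C(K^0,H^1)\to C(K^0,H^1\times H^1)$ in full generality, since this is where hypotheses on $K^0$ (such as local compactness and Hausdorffness) typically enter in compact-open topology arguments. I would handle this either by assuming such standard point-set hypotheses on $K^0$---which are consistent with the appendix's announced restriction to topological groupoids arising from open refinements---or by verifying directly that for any subbasic open set of the target (of the form $W(C,U)$ with $C\subseteq K^0$ compact and $U\subseteq H^1\times_{H^0} H^1$ open) the preimage under the pairing is a product of compact-open subbasics in each factor. Combining all five verifications gives the lemma.
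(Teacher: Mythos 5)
Your overall strategy is the same as the paper's: verify continuity of the five structure maps by reducing each to continuity of the corresponding structure map of $\H$ via standard compact-open-topology operations (postcomposition, precomposition, pairing). Two points of divergence are worth noting. First, your encoding of an arrow as a triple $(\f,\g,\hat\alpha)$ is inconsistent with the topology you yourself quote: the paper embeds $\mathrm{SMor}^1(\K,\H)$ into $\mathrm{SMor}^0(\K,\H)\times C(K^0,H^1)$, recording only the source, so under that topology the target map is \emph{not} a projection. It is the derived map $T(\msf u,\sigma)=\msf v$ with $v^0=t\circ\sigma$ and $v^1(g)=\sigma(s(g))\inv\cdot u^1(g)\cdot\sigma(t(g))$, and verifying its continuity is where the paper's proof spends most of its effort; if you insist on the triple encoding you must instead show the two candidate topologies on $\mathrm{SMor}^1(\K,\H)$ coincide, which is the same statement in disguise. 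Either way the required argument is exactly the postcomposition-plus-pairing machinery you already deploy for the composition map, so this is a repairable omission rather than a wrong idea, but as written the sentence ``source and target are projections, hence continuous'' does not apply to the declared topology. Second, you correctly isolate continuity of the pairing $C(K^0,H^1)\times C(K^0,H^1)\rto C(K^0,H^1\times_{H^0}H^1)$ as the one genuinely delicate point-set step: the paper's proof simply asserts that pointwise multiplication of sections is continuous because multiplication on $H^1$ is, whereas the regularity hypothesis it later imposes on $G^0$ in Theorem \ref{T top-on-OFMor} is used for precisely the kind of compact-refinement argument that makes the pairing map continuous. Flagging this obstacle and the hypothesis needed to discharge it is a genuine gain in precision over the printed proof.
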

\begin{proof}
We first show that the source and target maps
\[
S,T: \mathrm{SMor}^1({\sf K,H})\rto \mathrm{SMor}^0({\sf K,H})
\]
are continuous. As $S$ is the composition
\[
\mathrm{SMor}^1({\sf K,H})\hrto \mathrm{SMor}^0({\sf K,H})\times C(K^0,H^1) \xrightarrow{\text{proj}_1} \mathrm{SMor}^0({\sf K,H}),
\]
it is continuous. On the other hand, the map $T(\msf u=(u^0,u^1),\sigma)=\msf v=(v^0,v^1)$ is given by
\[
v^0(x)=t(\sigma(x)),\qq
v^1(g)=\sigma(s(g))\inv\cdot u^1(g) \cdot \sigma(t(g)).
\]
Note that $\sigma$ is a section of $(u^0)^*s: (u^0)^* H^1\rto K^0$, and $t\circ\sigma$ is the composition
\[
X^0\xrightarrow{(u^0(x),\sigma(x))} H^1 \xrightarrow{t} H^0.
\]
Hence $t\circ \sigma$ is continuous on $(u^0,\sigma)$ since $t$ is continuous. Similarly, since multiplication, inverse map on $H^1$, $s$ and $t$ are all continuous, $\sigma(s(g))\inv\cdot u^1(g) \cdot \sigma(t(g))$ is continuous on $u^1$.

We next consider the inverse map $I:\mathrm{SMor}^1({\sf K,H})\rto\mathrm{SMor}^1(\sf K,H)$ and the multiplication
\[
M:\mathrm{SMor}^1({\sf K,H})\times_{T,S}\mathrm{SMor}^1(\sf K,H)\rto\mathrm{SMor}^1(\sf K,H).
\]
For the inverse map we have
\[
I(\msf u,\sigma)=(T(\msf u,\sigma),\sigma\inv).
\]
Hence it is continuous, since inverse map on $H^1$ is continuous. For the multiplication map we have
\[
M((\msf u,\sigma),(T(\msf u,\sigma),\sigma'))=
(\msf u,\sigma\cdot\sigma').
\]
It is continuous since the multiplication on $H^1$ is continuous.

Finally, the unit map $U:\mathrm{SMor}^0({\sf K,H})\rto \mathrm{SMor}^1(\sf K,H)$ is given by
\[
U(\sf u)=(\sf u, 1_{\sf u}),
\]
where $1_{\sf u}(x)=1_{u^0(x)}$. Since $1:H^0\rto H^1$ is continuous, $U$ is continuous.
\end{proof}

\subsection{Topology on $\sf OFMor(G,H)$}

Now we consider the topology over $\sf OFMor(G,H)=(\mathrm{OFMor}^1({\sf G,H})\rrto\mathrm{OFMor}^0(\sf G,H))$. As
\[
\mathrm{OFMor}^0({\sf G,H})=\bigsqcup_{\psi:{\sf K\rto G},\, \text{open refinement}} \mathrm{SMor}^0(\sf K,H),
\]
we assign $\mathrm{OFMor}^0({\sf G,H})$ the topology of disjoint union of topologies of each component.

We next define the topology on $\mathrm{OFMor}^1(\sf G,H)$. Given any two open refinements $\psi:\sf K\rto G$ and $\phi:\sf W\rto G$, denote the set of arrows from the component $\mathrm{SMor}^0(\sf K,H)$ to $\mathrm{SMor}^0(\sf W,H)$ by $\mathrm{Mor}^1(\psi,\phi)$. Then
\[
\mathrm{OFMor}^1({\sf G,H})=\bigsqcup_{\psi,\phi}\mathrm{OFMor}^1(\psi,\phi).
\]
We define the topology on $\mathrm{OFMor}^1(\sf G,H)$ to be the disjoint union of topologies of each $\mathrm{OFMor}^1(\psi,\phi)$ described below.

Recall that we have the two projections $\pi_1: {\sf K\ctimes_G W\rto K}$, $\pi_2:{\sf K\ctimes_G W\rto W}$. So we have two continuous map
\[
\pi_1^*:\mathrm{SMor}^0({\sf K,H})\rto \mathrm{SMor}^0(\sf K\ctimes_G W,H),\qq
\pi_2^*:\mathrm{SMor}^0({\sf W,H})\rto \mathrm{SMor}^0(\sf K\ctimes_G W,H).
\]
Consider the map
\[
\pi_1^*\times\pi^*_2:\mathrm{SMor}^0({\sf K,H})\times \mathrm{SMor}^0({\sf W,H}) \rto \mathrm{SMor}^0({\sf K\ctimes_G W,H})\times \mathrm{SMor}^0({\sf K\ctimes_G W,H})
\]
and
\[
S\times T:\mathrm{SMor}^1({\sf K\ctimes_G W,H})\rto
\mathrm{SMor}^0({\sf K\ctimes_G W,H})\times \mathrm{SMor}^0({\sf K\ctimes_G W,H}).
\]
Then $\mathrm{OFMor}^1(\psi,\phi)$ is the fiber product of these two maps, that is
\[
\mathrm{OFMor}^1(\psi,\phi)=\mathrm{SMor}^0({\sf K,H})\times_{\pi^*_1,S}\mathrm{SMor}^1({\sf K\ctimes_G W,H})\times_{T,\pi_2^*} \mathrm{SMor}^0({\sf W,H}).
\]
So it inherits a topology from this fiber product.

\begin{theorem}\label{T top-on-OFMor}
Suppose that $G^0$ is a regular topological space, then $\sf OFMor(G,H)$ is a topological groupoid.
\end{theorem}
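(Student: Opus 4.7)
My plan is to verify that each of the five structure maps of $\sf OFMor(G,H)$---source $S$, target $T$, unit, inverse, and vertical composition $M$---is continuous. The maps $S$ and $T$ are continuous by construction, being the outer projections of the fiber-product definition of $\mathrm{OFMor}^1(\psi,\phi)$. The inverse sends $(\u,\alpha,\v)\mapsto(\v,\alpha^{-1},\u)$, and on the $\alpha$-coordinate acts as $\alpha\mapsto\mathrm{inv}_{H^1}\circ\alpha\circ\mathrm{swap}$; a basic compact-open set $V(C,U)$ in the target pulls back to $V(\mathrm{swap}(C),\mathrm{inv}^{-1}(U))$, which is again basic, so the inverse is continuous. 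The unit sends $\u\in\mathrm{SMor}^0(\K,\H)$ to $(\u,1_{(\psi,\K,\u)},\u)$; using the pullback-groupoid description of $\K$, $1_{(\psi,\K,\u)}(k,k') = u^1((k,1_{\psi^0(k)},k'))$, which factors as $u^1\circ\eta$ where $\eta\colon(\K\ctimes_\G\K)^0\rto K^1$, $(k,k')\mapsto(k,1_{\psi^0(k)},k')$, is continuous; the assignment $u^1\mapsto u^1\circ\eta$ is continuous into the compact-open topology by the same pullback argument as for the inverse.

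The heart of the proof is continuity of $M$. Given composable arrows $(\u,\alpha,\v)\in\mathrm{OFMor}^1(\psi,\phi)$ and $(\v,\beta,\w)\in\mathrm{OFMor}^1(\phi,\varphi)$ with open refinements $\psi\colon\K\rto\G$, $\phi\colon\W\rto\G$, $\varphi\colon\L\rto\G$, the composite is $(\u,\alpha\,\tilde\bullet\,\beta,\w)$ with $(\alpha\,\tilde\bullet\,\beta)(k,l) = \alpha(k,m)\cdot\beta(m,l)$ for any $m\in W^0$ over $\psi^0(k) = \varphi^0(l)\in G^0$; it suffices to show $(\alpha,\beta)\mapsto\alpha\,\tilde\bullet\,\beta$ is continuous into the compact-open topology on $\mathrm{SMor}^1(\K\ctimes_\G\L,\H)$. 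No global choice of $m$ exists, but the open-refinement structure $W^0 = \bigsqcup_i V_i$ with $V_i$ open in $G^0$ furnishes local ones: setting $U_i := \{(k,l)\in(\K\ctimes_\G\L)^0 : \psi^0(k)\in V_i\}$ gives an open cover of $(\K\ctimes_\G\L)^0$ by surjectivity of $\phi^0$, and on $U_i$ the choice $m_i(k,l) := \psi^0(k)$ viewed inside $V_i\subset W^0$ produces continuous local sections $s_i^1\colon U_i\rto(\K\ctimes_\G\W)^0$ and $s_i^2\colon U_i\rto(\W\ctimes_\G\L)^0$ with $(\alpha\,\tilde\bullet\,\beta)|_{U_i}(k,l) = \alpha(s_i^1(k,l))\cdot\beta(s_i^2(k,l))$.

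To convert this local formula into compact-open continuity, fix compact $D\subset(\K\ctimes_\G\L)^0$, open $U\subset H^1$, and $(\alpha_0,\beta_0)$ with $(\alpha_0\,\tilde\bullet\,\beta_0)(D)\subset U$. For each $(k,l)\in D$ pick $i = i(k,l)$ with $(k,l)\in U_i$; continuity of $\alpha_0,\beta_0$, and multiplication in $H^1$ yields open neighborhoods $N'_{(k,l)}\ni s_i^1(k,l)$, $N''_{(k,l)}\ni s_i^2(k,l)$ and open $U'_{(k,l)},U''_{(k,l)}\subset H^1$ satisfying $\alpha_0(N'_{(k,l)})\subset U'_{(k,l)}$, $\beta_0(N''_{(k,l)})\subset U''_{(k,l)}$, and $U'_{(k,l)}\cdot U''_{(k,l)}\subset U$; then $B_{(k,l)} := U_i\cap(s_i^1)^{-1}(N'_{(k,l)})\cap(s_i^2)^{-1}(N''_{(k,l)})$ is an open neighborhood of $(k,l)$. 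Compactness of $D$ gives a finite subcover $\{B_j\}_{j=1}^n$. Here the regularity hypothesis on $G^0$ enters: since $K^0$ and $L^0$ are disjoint unions of opens in $G^0$, the subspace $(\K\ctimes_\G\L)^0\subset K^0\times L^0$ is regular, so the shrinking lemma for regular spaces furnishes compact $D_j\subset B_j$ with $D = \bigcup_j D_j$. The images $C'_j := s_{i_j}^1(D_j)\subset N'_j$ and $C''_j := s_{i_j}^2(D_j)\subset N''_j$ are compact, and the open set $\mc N := \bigcap_j\big[V(C'_j,U'_j)\times V(C''_j,U''_j)\big]$ is a compact-open neighborhood of $(\alpha_0,\beta_0)$ such that for every $(\alpha,\beta)\in\mc N$ and $(k,l)\in D_j$, $(\alpha\,\tilde\bullet\,\beta)(k,l) = \alpha(s_{i_j}^1(k,l))\cdot\beta(s_{i_j}^2(k,l))\in U'_j\cdot U''_j\subset U$. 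The main obstacle is precisely this final step---converting pointwise to compact-open continuity through a shrinking argument---which is exactly why the regularity of $G^0$ is required.
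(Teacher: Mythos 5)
Your proof is correct and follows essentially the same strategy as the paper: reduce to subbasic compact-open sets, use the disjoint-open-union structure of the intermediate refinement to cover the compact set by opens admitting continuous local choices of the middle object, invoke regularity of $G^0$ to shrink that cover to compact pieces, and control $\alpha$ and $\beta$ separately through the multiplication of $H^1$. Your treatment of the last step (finitely many pairs $U'_j, U''_j$ with $U'_j\cdot U''_j\subseteq U$, obtained pointwise from continuity of $m$ and then a finite subcover) is in fact the more careful version of the paper's argument, which instead takes the two projections of $m^{-1}(U)$; you also spell out the continuity of the unit and inverse maps, which the paper declares obvious.
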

\begin{proof}
The source and target maps are
\[
\mathrm{OFMor}^1(\psi,\phi)\xrightarrow{(\text{proj}_1,\,\text{proj}_3)} \mathrm{SMor}^0({\sf K,H})\times \mathrm{SMor}^0({\sf W,H}),
\]
projections to factors, hence are continuous.

We next consider the multiplication map
\[
M:\mathrm{OFMor}^1(\psi,\phi)\times_{T,S}\mathrm{OFMor}^1(\phi,\varphi) \rto \mathrm{OFMor}^1(\psi,\varphi).
\]
Take two arrows $\alpha:\sf u\rto \sf w$ and $\beta:\sf w\rto v$ in $\mathrm{OFMor}^1(\psi,\phi), \mathrm{OFMor}^1(\phi,\varphi)$ respectively. Let $\alpha\,\tilde\bullet\,\beta:\sf u\rto v$ be their multiplication. Take an open neighborhood of $\alpha\,\tilde\bullet\,\beta:\sf u\rto v$:
\[
U_{\sf u}\times_{\pi_1^*,S}U_{\alpha\,\tilde\bullet\,\beta}\times_{T,\pi^*_2} U_{\sf v}.
\]
We next construct two open neighborhoods of $\alpha:\sf u\rto w$ and $\beta:\sf w\rto v$ whose images under $M$ are contained in $U_{\sf u}\times_{\pi_1^*,S}U_{\alpha\,\tilde\bullet\,\beta}\times_{T,\pi^*_2} U_{\sf v}$.

Note that by the definition of topology over $\mathrm{SMor}^1({\sf K\ctimes_G V,H})$,
\[
U_{\alpha\,\tilde\bullet\,\beta}\subseteq \mathrm{SMor}^1({\sf K\ctimes_G V,H})\subseteq\mathrm{SMor}^0({\sf K\ctimes_G V,H})\times C(({\sf K\ctimes_G V})^0,H^1).
\]
So for simplicity we could assume that
\[
U_{\alpha\,\tilde\bullet\,\beta}=
\left(\pi^*_1 U_{\sf u}\times [A,U]\right)\cap \mathrm{SMor}^1({\sf K\ctimes_G V,H}),
\]
where
\[
[A,U]=\{f:({\sf K\ctimes_G V})^0\rto H^1\mid f(A)\subseteq U\}\subseteq C(({\sf K\ctimes_G V})^0,H^1)
\]
is an open set in $C(({\sf K\ctimes_G V})^0,H^1)$ with   $A\subseteq ({\sf K\ctimes_G V})^0$ being a compact subset and $U\subseteq H^1$ being an open subset. We next construct open neighborhoods of $\alpha$ and $\beta$.

For simplicity, we could assume that $A\subseteq K_a\cap V_b$ with $K_a$ a component of $K^0$ and $V_b$ a component of $V^0$. So we can view $A\subseteq G^0$. Now $A$ is covered by $W^0$ via $W^0\hrto G^0$. Without loss of generality we could assume that $A\subseteq W_1\cap W_2$, since $A$ is compact. By the assumption that $G^0$ is regular, for every $x\in A\cap W_i$, there is an open neighborhood $U_{x,i}\subseteq W_i$ of $x$ such that $x\in U_{x,i}\subseteq \overline{U}_{x,i}\subseteq W_i$. These open neighborhoods forms an open cover of $A$. So we get a finite sub-cover of $A$, say $U_{x_1,1},\ldots, U_{x_k,1},U_{x_{k+1},2},\ldots, U_{x_n,2}$. Then we take
\[
A_1:=\left(\bigcup_{j=1}^k \overline U_{x_j,1} \right) \cap A,
\qq
\text{and}
\qq
A_2:=\left(\bigcup_{j=k+1}^n \overline U_{x_j,2} \right) \cap A,
\]
Both $A_1$ and $A_2$ are compact subsets of $G^0$ and covered by $W_1$ and $W_2$ respectively. So $A_1$ and $A_2$ are compact subsets in both $({\sf K\ctimes_G W})^0$ and $({\sf W\ctimes_G V})^0$.

Now consider the open subset $m\inv(U) \subseteq H^1\times_{t,s}H^1$, where $m:H^1\times_{t,s}H^1\rto H^1$ is the multiplication map of $\sf H$. Suppose the projections of $m\inv(U)$ to both factors of $H^1\times_{t,s} H^1$ are $U_1$ and $U_2$. Note that $U_1$ and $U_2$ are both open subsets of $H^1$ since the projections to both factors are open. Then we get two open subsets of $C(({\sf K\ctimes_GW})^0,H^1)$ and $C(({\sf W\ctimes_GV})^0,H^1)$ respectively, which are
\begin{align*}
&[A_1,U_1]\cap [A_2,U_1]=\{f:({\sf K\ctimes_G W})^0\rto H^1\mid f(A_1),f(A_2)\subseteq U_1\},\\
\text{and}\qq
&[A_1,U_2]\cap [A_2,U_2]=\{f:({\sf W\ctimes_G V})^0\rto H^1\mid f(A_1), f(A_2)\subseteq U_2\}.
\end{align*}

Then one see that
\[
\left(\pi^*_1U_{\sf u}\times ([A_1,U_1]\cap [A_2,U_1])\right)\cap \mathrm{SMor}^1({\sf K\ctimes_G W,H})
\]
is an open neighborhood of $\alpha$ in $\mathrm{SMor}^1({\sf K\ctimes_G W,H})$, denoted by $U_\alpha$. Let $U_{\sf w}$ be an open neighborhood of $\sf w$. So
$
U_{\sf u}\times_{\pi^*_1,S}U_{\alpha}\times_{T,\pi^*_2} U_{\sf w}
$
is an open neighborhood of $\alpha:\sf u\rto w$ in $\mathrm{Mor}(\psi,\phi)$. Similarly we get an neighborhood $U_{\beta}$ of $\beta$ in $\mathrm{SMor}^1({\sf K\ctimes_G W,H})$
\[
U_\beta=\left(\pi^*_1U_{\sf w}\times([A_1,U_2]\cap [A_2,U_2])\right)\cap \mathrm{SMor}^1({\sf W\ctimes_G V,H})
\]
and an open neighborhood $U_{\sf w}\times_{\pi^*_1,S} U_\beta\times_{T,\pi^*_2} U_{\sf v}$ of $\beta:\sf w\rto v$ in $\mathrm{Mor}(\phi,\varphi)$. Then we have
\[
M\left(\left(U_{\sf u}\times_{\pi^*_1,S}U_{\alpha}\times_{T,\pi^*_2} U_{\sf w}\right)\times_{\text{proj}_3,\text{proj}_1} \left(U_{\sf w}\times_{\pi^*_1,S} U_\beta\times_{T,\pi^*_2} U_{\sf v}\right)\right)\subseteq U_{\sf u}\times_{\pi^*_1,S} U_{\alpha\,\tilde\bullet\,\beta}\times_{T,\pi^*_2} U_{\sf v}.
\]
Therefore $M$ is continuous.

The inverse map and unit map are obviously continuous.
Therefore $\sf Mor(G,H)$ is a topological groupoid.
\end{proof}

One can see that the assumption that $G^0$ is regular is used to construct a finite cover of a compact set $A$ in terms of compact sets subject to a given open cover of $A$. It is more subtle to construct smooth structure over $\sf OFMor(G,H)$ when $\sf G$ and $\sf H$ are Lie groupoids, and we deal with this issue in \cite{Chen-Du-Ono}.

\subsection{Automorphisms}
We can also define automorphisms in $\mathrm{OFMor}^0(\sf G,G)$ in the same manner as Definition \ref{D automorphisms}. We denote the set of all automorphisms in $\mathrm{OFMor}^0(\sf G,G)$ by $\mathrm{OAut}^0(\msf G)$. So $\mathrm{OAut}^0(\msf G)=\mathrm{OFMor}^0({\sf G,G})\cap\mathrm{Aut}^0(\sf G)$. Then we also get a subgroupoid of $\sf OFMor(G,G)$
\[
{\sf OAut(G):=OFMor(G,G)}\Big|_{\mathrm{OAut}^0(\msf G)}.
\]
Suppose $G^0$ is regular, then $\sf OFMor(G,G)$ is a topological groupoid by Theorem \ref{T top-on-OFMor}. We then assign $\sf OAut(G)$ the induced topology. With the quotient topology, its coarse space $|\sf OAut(G)|$ is also a topological space.

As in \S \ref{S gpaction} we have a group structure over $|\sf OAut(G)|$, and $\sf OAut(G)$ is a (set level) $\mc K(\sf G)$-gerbe over $|\sf OAut(G)|$.

\begin{rem}
Since now $\G$ is a topological groupoid, we could endow $ZG^0$ the subspace topology from the inclusion $ZG^0\subseteq G^1$. Then one can see that the $\G$-action on $ZG^0$ is topological\footnote{A topological action of a topological groupoid on a topological space is a groupoid action for which the anchor and action maps are both continuous.}, hence $\msf {ZG}=\msf G\ltimes ZG^0$ is a topological groupoid. So now $\mc K(\sf G)$ consists of all continuous sections of $\msf {ZG}\rto \G$. We could view $\mc K(\sf G)$ as a subspace of the space of continuous sections of $\pi:ZG^0\rto G^0$, the latter one has the compact-open topology. Then we assign $\mc K(\sf G)$ the induced topology, with which $\mc K(\sf G)$ becomes a topological group.
\end{rem}

\begin{theorem}
Suppose $G^0$ is regular, then $\sf OAut(G)$ is a topological $\mc K(\sf G)$-gerbe over $|\sf OAut(G)|$.
\end{theorem}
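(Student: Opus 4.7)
The plan is to promote the set-level $\mc K(\sf G)$-gerbe structure established in Corollary \ref{C aut-is-gerbe} to a topological statement, using the topological groupoid structure on $\sf OFMor(G,G)$ from Theorem \ref{T top-on-OFMor} (which requires $G^0$ to be regular) together with the topological group structure on $\mc K(\sf G)$ supplied by the remark preceding the theorem. The three steps are: (a) equip $\sf OAut(G)$ with the subspace topology to obtain a topological groupoid over which $|\sf OAut(G)|$ inherits the quotient topology; (b) verify that the strict morphism $\sf OAut(G) \rto (\widetilde{\mathrm{OAut}}^1(\sf G) \rrto \mathrm{OAut}^0(\sf G))$ from \eqref{E (p1,id)-to-tilde-aut} is continuous; and (c) show that the fiberwise identification $\Psi:\mc K(\sf G)\xrightarrow{\cong}\Gamma_{\sf (\psi,K,u)}$ from Proposition \ref{P isotropy-of-general-auto-u} is a homeomorphism of topological groups and assembles into a continuous bundle isomorphism.

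Step (a) is routine: since $\sf OFMor(G,G)$ is a topological groupoid by Theorem \ref{T top-on-OFMor}, the restrictions $\mathrm{OAut}^0(\sf G)\subseteq\mathrm{OFMor}^0(\sf G,G)$ and $\mathrm{OAut}^1(\sf G)\subseteq\mathrm{OFMor}^1(\sf G,G)$ inherit subspace topologies for which source, target, unit, inverse, and vertical composition remain continuous. For step (b), I would equip $\widetilde{\mathrm{OAut}}^1(\sf G)=\mathrm{OAut}^1(\sf G)/\ker\sf OAut(G)$ with the quotient topology; continuity of $p^1$ and the induced source/target maps then follows by definition, and Proposition \ref{P isotropy-of-general-auto-u} ensures the fibers of $\ker p^1\rto\mathrm{OAut}^0(\sf G)$ are isomorphic as groups to $\mc K(\sf G)$.

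The main technical content lies in step (c). The formula \eqref{E def-sigma-1u}, namely
\[
\Psi(\sigma)(k_1,x,k_2)=\sigma(u^0(k_1))\cdot u^1((\psi^1)\inv(x)),
\]
expresses $\Psi(\sigma)$ through composition in $\sf G$, the structure maps $u^1$ and $(\psi^1)\inv$ (continuous because the arrow bijection from the equivalence condition is a homeomorphism on fibers under the compact-open topology), and evaluation of $\sigma$. Continuity of $\Psi$ in $\sigma$ with respect to the compact-open topology on $\mc K(\sf G)$ therefore follows from continuity of the groupoid structure maps of $\sf G$. Jointly in $(\psi,\K,\u)$, continuity holds over each fixed component $\mathrm{SMor}^0(\K,\G)$ of $\mathrm{OAut}^0(\sf G)$, giving a continuous bundle map from $\mathrm{OAut}^0(\sf G)\times\mc K(\sf G)$ into $\ker\sf OAut(G)$. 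For the inverse map $\Phi$ of \eqref{E def-Phi}, which sends $\alpha\mapsto[b\mapsto x\inv\cdot\alpha(k,1_{\psi^0(k)},k)\cdot x]$, continuity is verified locally: on each open neighborhood in $\mathrm{OAut}^0(\sf G)$ one chooses a continuous local section $a\mapsto k$ of $u^0$ (available since $\psi$ is an open refinement), and extends to $b\notin\mathrm{Im}\,u^0$ via the same path-based construction used in the proof of Theorem \ref{T equi-i}.

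The hard part will be handling the continuity of $\Phi$ as $(\psi,\K,\u)$ varies, since the definition a priori requires choices of preimages and connecting arrows. To resolve this, I would leverage the regularity of $G^0$ exactly as in the proof of Theorem \ref{T top-on-OFMor}: given a compact $A\subseteq G^0$, I would replace a finite open cover of $A$ by components of $K^0$ with a finite cover by compactly contained subsets, over each of which continuous local lifts of $\psi^0$ exist and patch via the identities \eqref{E beta-comute-x-in-lemma-Gamma-u}. Once continuity of both $\Psi$ and $\Phi$ is established, $\ker\sf OAut(G)\rto\mathrm{OAut}^0(\sf G)$ is a locally trivial $\mc K(\sf G)$-bundle, and combined with (a) and (b) this gives the desired topological $\mc K(\sf G)$-gerbe structure over $|\sf OAut(G)|$.
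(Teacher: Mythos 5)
Your proposal follows essentially the same route as the paper: restrict the topological groupoid structure of Theorem \ref{T top-on-OFMor} to $\sf OAut(G)$, pass to the quotient groupoid $(\widetilde{\mathrm{OAut}}^1(\msf G)\rrto\mathrm{OAut}^0(\msf G))$ via \eqref{E (p1,id)-to-tilde-aut}, and upgrade the set-level maps $\Psi$ and $\Phi$ of Proposition \ref{P isotropy-of-general-auto-u} to mutually inverse continuous bundle maps by reusing the compact-open-topology and regularity arguments from the proof of Theorem \ref{T top-on-OFMor}. The only cosmetic difference is that the paper concludes $\ker p^1$ is a \emph{globally} trivial bundle of groups (since $\tilde\Psi$ and $\tilde\Phi$ are global homeomorphisms), whereas you state local triviality, which is all the gerbe definition requires.
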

Here by a topological $\mc K(\sf G)$-gerbe $\msf p=(p^0,p^1):\sf G\rto H$ we mean that $\msf p$ is a strict morphism of topological groupoids, $p^0=id_{G^0}$, and the kernel $\ker p^1$ is a locally trivial bundle of groups whose fiber is isomorphic to $\mc K(\sf G)$.
\begin{proof}
First as in \eqref{E (p1,id)-to-tilde-aut}, \S \ref{Subs isotropy-aut}, we have the projection
\[
\msf p=(p^1,id_{\mathrm{OAut}^0(\msf G)}):{\sf OAut(G)}\rto (\widetilde{\mathrm{OAut}}^1(\msf G)\rrto \mathrm{OAut}^0(\msf G)).
\]
We only have to show that $p^1:\ker p^1\rto\mathrm{OAut}^0(\sf G)$ is a locally trivial bundle of group with fiber isomorphic to $\mc K(\sf G)$.

By the map $\Psi$ in \eqref{E def-Psi}, Proposition \ref{P isotropy-of-general-auto-u}, we have a map
\[
\tilde\Psi:\mathrm{OAut}^0(\msf G)\times\mc K(\msf G)\rto \ker p^1=\ker \mathrm{OAut}^1(\sf G).
\]
By the same proof used in the proof of Theorem \ref{T top-on-OFMor} we can see that $\tilde\Psi$ is continuous. By the map $\Phi$ in \eqref{E def-Phi}, Proposition \ref{P isotropy-of-general-auto-u}, we see that there is also another map
\[
\tilde\Phi: \ker p^1=\ker \mathrm{OAut}^1(\msf G)\rto \mathrm{OAut}^0(\msf G)\times\mc K(\msf G).
\]
The map $\ker p^1\rto \mathrm{OAut}^0(\G)$ is continuous. We only have to show that the map $\text{proj}_2\circ\tilde\Phi:\ker p^1\rto \mc K(\msf G)$ is continuous. This also follows from the proof used in the proof of Theorem \ref{T top-on-OFMor}. Therefore $\tilde\Phi$ is also continuous. Then by the proof of Proposition \ref{P isotropy-of-general-auto-u} we see that $\tilde\Phi$ is the inverse map of $\tilde\Psi$. So they are both homeomorphisms. This shows that $\ker p^1$ is a globally trivial bundle of groups. Finally, by Proposition \ref{P isotropy-of-general-auto-u}, $\tilde\Psi$ gives the isomorphisms between fibers of $\ker p^1$ and $\mc K(\sf G)$. Therefore $\msf p=(p^1,id_{\mathrm{OAut}^0(\msf G)}):{\sf OAut(G)}\rto (\widetilde{\mathrm{OAut}}^1(\msf G)\rrto \mathrm{OAut}^0(\msf G))$ gives rise to a topological $\mc K(\sf G)$-gerbe over $(\widetilde{\mathrm{OAut}}^1(\msf G)\rrto \mathrm{OAut}^0(\msf G))$. On the other hand, $(\widetilde{\mathrm{OAut}}^1(\msf G)\rrto \mathrm{OAut}^0(\msf G))$ is equivalent to its coarse space $|(\widetilde{\mathrm{OAut}}^1(\msf G)\rrto \mathrm{OAut}^0(\msf G))|=|\sf OAut(G)|$, since for this groupoid the isotropy groups of objects in $\mathrm{OAut}^0(\msf G)$ are all trivial. Therefore, $\sf OAut(G)$ is a $\mc K(\sf G)$-gerbe over $|\sf OAut(G)|$.
\end{proof}

\begin{rem}
Under the assumption that $G^0$ is locally compact, Hausdorff and $\sf G$ is proper \'etale, we explain that $|\sf OAut(G)|$ is a topological group. First notice that, under the current situation there is a continuous map
\[
\mathrm{OFMor}^0({\sf G,G})=\bigsqcup_{\psi:{\sf K\rto G},\,\text{open refinement}}\mathrm{SMor}^0({\sf K,H})\rto \bigsqcup_{\psi:{\sf K\rto G},\,\text{open refinement}}C^0(|\sf K|,|G|)\rto C^0(|\sf G|,|G|).
\]
It is direct to see that this continuous map descends to a continuous map $|{\sf OFMor(G,G)}|\rto C^0(|\sf G|,|G|)$. In particular, we get a continuous map $|{\sf OAut(G)}|\rto \mathrm{Homeo}(|\sf G|)$. From the definition of composition of morphisms in Definition \ref{D comp-full-mor} one can see that $|{\sf OAut(G)}|\rto \mathrm{Homeo}(|\sf G|,|G|)$ is also a group homomorphism. Denote the image by $\mathrm{Homeo}_*(|\sf G|)$.

Under the above assumption $\mathrm{Homeo(|\G|)}$ is a topological group w.r.t compact-open topology. So $\mathrm{Homeo}_*(|\G|)$ is also a topology group with induced topology.

Since $\sf G$ is proper \'etale one can show that $|{\sf OAut(G)}|\rto \mathrm{Homeo}_*(|\sf G|,|G|)$ is an open covering map. So from the commutative diagram
\[
\xymatrix{
|{\sf OAut(G)}|\times |{\sf OAut(G)}| \ar[r] \ar[d] &
|{\sf OAut(G)}| \ar[d]\\
\mathrm{Homeo}_*(|\sf G|,|G|)\times \mathrm{Homeo}_*(|\sf G|,|G|) \ar[r] &
\mathrm{Homeo}_*(|\sf G|,|G|)}
\]
we see that the multiplication on $\sf |OAut(G)|$ is continuous. Similarly, the inverse map over $|\sf OAut(G)|$ is also continuous. Hence $|\sf OAut(G)|$ is a topological group.

Therefore when $\sf G$ is an orbifold groupoid, $|\sf OAut(G)|$ is a topological group.
\end{rem}



\end{document}